\numberwithin{equation}{section}
\theoremstyle{plain}
\newtheorem{prop}{Proposition}[section]
\newtheorem{thm}[prop]{Theorem}
\newtheorem*{mainthm}{Theorem~\ref{thm:bound}}
\newtheorem*{corotwist}{Corollary~\ref{coro:twists}}
\newtheorem{coro}[prop]{Corollary}
\newtheorem{lemma}[prop]{Lemma}
\newtheorem{hypos}[prop]{Hypotheses}
\theoremstyle{definition}
\newtheorem{defi}[prop]{Definition}
\newtheorem{remark}[prop]{Remark}
\newtheorem{exm}{Example}
\theoremstyle{remark}
\numberwithin{table}{section}
\DeclareMathOperator{\rec}{rec}
\DeclareMathOperator{\Res}{Res}
\DeclareMathOperator{\res}{res}
\DeclareMathOperator{\Gal}{Gal}
\DeclareMathOperator{\Cl}{Cl}
\DeclareMathOperator{\Hom}{Hom}
\DeclareMathOperator{\Frac}{Frac}
\DeclareMathOperator{\im}{Im}
\DeclareMathOperator{\Char}{char}
\DeclareMathOperator{\sign}{sign}
\newcommand{\cO}{\mathcal{O}}
\newcommand{\Om}{{\mathscr{O}}}
\newcommand{\Disc}{\Delta}
\newcommand{\Sel}{\mathrm Sel}
\newcommand{\Asq}{(A_K^\times/(A_K^\times)^2)_\square}
\newcommand{\A}{A_K^\times/(A_K^\times)^2}
\newcommand{\AsqO}{(A_\Om^\times/(A_\Om^\times)^2)_\square}
\newcommand{\Cll}{\Cl_*}
\def\ZZ{\mathbb Z}
\def\RR{\mathbb R}
\def\FF{\mathbb F}
\def\QQ{\mathbb Q}
\def\CC{\mathbb C}
\def\C{\mathscr C}
\def\Ml{C_*}
\def\Mu{\tilde{C}}
\def\nsquare{\ensurestackMath{\stackinset{c}{}{c}{}{/}{\square}}}
\def\<#1>{{\left\langle{#1}\right\rangle}}
\def\Z{{\mathbb Z}}             % integers
\def\Q{{\mathbb Q}}             % rationals
\def\set#1{{\left\{{\def\st{\;:\;}#1}\right\}}}
\def\id#1{{\mathfrak{#1}}}      % an ideal
\DeclareMathOperator{\norm}{{\mathscr N}}
\DeclareMathOperator{\trace}{{\mathrm{Tr}}}
\def\dkro#1#2{\left(\frac{#1}{#2}\right)}             % display
\let\kro\dkro
\newcommand{\lmfdbnumberfield}[1]{\href{http://www.lmfdb.org/NumberField/#1}{#1}}
\newcommand{\lmfdbec}[3]{\href{http://www.lmfdb.org/EllipticCurve/Q/#1/#2/#3}{{\text{\rm#1.#2#3}}}}
\DeclareSymbolFont{cyrletters}{OT2}{wncyr}{m}{n}
\DeclareMathSymbol{\Sha}{\mathalpha}{cyrletters}{"58}
\newcommand{\hooklongrightarrow}{\lhook\joinrel\longrightarrow}
\title[$2$-Selmer groups and quadratic twists]{On $2$-Selmer groups and quadratic twists of elliptic curves}
\author{Daniel Barrera Salazar}
\address{DMCC \\ Universidad de Santiago de Chile \\ Alameda 3363, Santiago, Chile.}
\email{danielbarreras@hotmail.com}
\thanks{DBS was supported by the FONDECYT PAI 77180007}
\author{Ariel Pacetti}
\address{FaMAF-CIEM (CONICET) \\ Universidad Nacional de C\'ordoba \\
Medina A\-llen\-de s/n, Ciudad Universitaria, 5000 C\' ordoba, Rep\'
ublica Argentina.}
\email{apacetti@famaf.unc.edu.ar}
\thanks{AP was partially supported by PICT-2018-02073 and Proyecto Consolidar 2018-2021 33620180100781CB}
\author{Gonzalo Tornar{\'\i}a}
\address{Universidad de la Rep\'ublica \\ Montevideo, Uruguay}
\email{tornaria@cmat.edu.uy}
\thanks{GT was partially supported by ANII--FCE 2017--136609}
\keywords{$2$-Selmer group, quadratic twists.}
\subjclass[2010]{Primary: 11G05, Secondary: 11G40}
\dedicatory{To the memory of John Tate}
\begin{document}

\begin{abstract}
  Let $K$ be a number field and $E/K$ be an elliptic curve with no
  $2$-torsion points. In the present article we give lower and upper
  bounds for the $2$-Selmer rank of $E$ in terms of the $2$-torsion of a
  narrow class group of a certain cubic extension of $K$ attached to $E$.
  As an
  application, we prove (under mild hypotheses) that a positive
  proportion of prime conductor quadratic twists of $E$ have the same
  $2$-Selmer group.
\end{abstract}

\maketitle

\section*{Introduction}

Given an elliptic curve $E$ over a number field $K$, the Mordell-Weil
theorem implies that its set of $K$-rational points is a finitely
generated abelian group. In particular, it has a torsion part and a
free one. From a computational point of view finding the torsion
part is the ``easy'' task (and is implemented in most number theory
computational systems, such as PARI/GP~\cite{PARI2}, SageMath~\cite{sagemath} or
Magma~\cite{Magma}). The computation of the free part is more subtle, and
involves the \emph{descent} method (see for example section VIII.3 of
\cite{MR2514094}), and is still an open question whether the proposed
algorithms to compute ranks of elliptic curves end or not (depending
on the finiteness of the Tate-Shafarevich group).

The most effective way to compute the rank is to apply $2$-descent,
which involves computing the $2$-Selmer group (see
Definition~\ref{defi:2selmer}). Since computing the $2$-Selmer group
involves hard computations, a natural question is whether one can give
a bound for it. Let $E$ be an elliptic curve of the
  form $$E:y^2=F(x)$$ with $F(x) \in \Om_K[x]$ a monic irreducible cubic
  polynomial and let $A_K = K[x]/F(x)$ be the cubic extension of
  $K$ given by $F(x)$.
In \cite{Brumer} the authors give, for $K=\QQ$, an upper bound for
semistable elliptic curves in terms of the class group of $A_\QQ$
(which can be efficiently computed), and is the first article to show
a relation between the $2$-Selmer group and a class group. Recently,
in \cite{MR3934463} the author used similar ideas to provide a lower
bound for the $2$-Selmer group of a rational elliptic curve under some
restrictive hypotheses;
namely has  odd and square-free discriminant.

The purpose of
the present article is to extend the ideas of Brumer-Kramer and Li to
give both lower and upper bounds for the $2$-Selmer groups of elliptic
curves over number fields with more relaxed hypotheses.
Denote by $\Cll(A_K,E)$
the narrow class group given in Definition~\ref{defi:Cl*}.
One of our main results is the following:
\begin{mainthm}
  Let $K$ be a number field and let $E/K$ be an elliptic curve satisfying hypotheses~\ref{hypo:KF}. Then
  %If $K$ and $F(x)$ satisfy hypotheses~\ref{hypo:KF} then
  \[
    \dim_{\FF_2}\Cll(A_K,E)[2] \le \dim_{\FF_2}\Sel_2(E) \le \dim_{\FF_2}\Cll(A_K,E)[2]+{[K:\Q]}.
  \]
  In particular, if $K=\QQ$, the order of the Selmer group is determined by the $2$-torsion of $\Cll(A_K,E)$ and the root number of $E$.
\end{mainthm}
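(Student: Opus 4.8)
The plan is to relate $\Sel_2(E)$ to a suitable quotient of the units and $2$-torsion of the class group of the cubic algebra $A_K = K[x]/(F(x))$, following the strategy of Brumer--Kramer and Li but now carried out over a general base number field $K$. I would start from the classical description of the $2$-descent map attached to a Weierstrass equation $y^2 = F(x)$ with $F$ monic cubic and $E$ having no $2$-torsion: there is an injection $E(K)/2E(K) \hookrightarrow A_K^\times/(A_K^\times)^2$ sending a point $(x_0,y_0)$ to the class of $x_0 - \theta$ (where $\theta$ is the image of $x$ in $A_K$), and this extends to an injection of the full $2$-Selmer group into a ``norm-square'' subgroup $(A_K^\times/(A_K^\times)^2)_\square$ cut out by the condition that the $A_K/K$-norm lands in $(K^\times)^2$. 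The local conditions defining $\Sel_2(E)$ translate into the condition that the class be unramified outside a controlled set (essentially the primes dividing $2\,\Delta$), so that $\Sel_2(E)$ sits inside the subgroup of $(A_K^\times/(A_K^\times)^2)_\square$ that is everywhere unramified, i.e. supported on $S$-units and the $2$-torsion of the class group, for $S$ the relevant finite set. The hypotheses~\ref{hypo:KF} are exactly what is needed to make all these local computations behave uniformly (in particular controlling the behavior at primes above $2$ and at the ramified primes of $F$).

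Next I would compute, via the standard exact sequences, the size of the group of everywhere-unramified norm-square classes. From the exact sequence $1 \to \cO_{A_K}^\times/(\cO_{A_K}^\times)^2 \to (A_K^\times/(A_K^\times)^2)_{\mathrm{unr}} \to \Cl(A_K)[2] \to 1$, together with the Dirichlet unit theorem applied to $A_K$ (whose unit rank is governed by $[A_K:\Q] = 3[K:\Q]$), and then intersecting with the norm-square condition, one gets that the everywhere-unramified part of $(A_K^\times/(A_K^\times)^2)_\square$ has $\FF_2$-dimension equal to $\dim_{\FF_2}\Cll(A_K,E)[2]$ plus a bounded error term coming from the units; the narrow class group $\Cll(A_K,E)$ is defined (Definition~\ref{defi:Cl*}) precisely so as to absorb the sign/ramification-at-infinity bookkeeping and make the lower bound clean. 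The lower bound $\dim_{\FF_2}\Cll(A_K,E)[2] \le \dim_{\FF_2}\Sel_2(E)$ then follows because every $2$-torsion class in $\Cll(A_K,E)$ produces, via this dictionary, a class in the Selmer group (one must check the local conditions at the finitely many bad primes actually hold, which is where the hypotheses are used). The upper bound follows by bounding the local contributions at primes above $2$: the discrepancy between ``unramified away from $S$'' and ``genuinely everywhere unramified'' is controlled by $\sum_{v \mid 2}$ of a local term, and the hypotheses force each such local term to contribute at most $[K_v:\Q_p]$, summing to $[K:\Q]$.

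Finally, for the case $K = \Q$: here $[K:\Q] = 1$, so the two bounds differ by exactly one, and $\dim_{\FF_2}\Sel_2(E) \in \{\,d,\ d+1\,\}$ with $d = \dim_{\FF_2}\Cll(A_\Q,E)[2]$. To pin down which value occurs I would invoke the parity of the $2$-Selmer rank: by the theorem of Dokchitser--Dokchitser (parity of $p$-Selmer ranks), $\dim_{\FF_2}\Sel_2(E)$ has the same parity as the order of vanishing predicted by the sign of the functional equation, i.e. is determined by the global root number $w(E)$. Since exactly one of $d$, $d+1$ has the correct parity, $\dim_{\FF_2}\Sel_2(E)$ — and hence $\#\Sel_2(E)$ — is completely determined by $\Cll(A_\Q,E)[2]$ and $w(E)$.

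I expect the main obstacle to be the careful verification of the local conditions at the primes above $2$ (and at the ramified primes of $F$): showing that the image of $\Sel_2(E)$ in the relevant local quadratic-algebra groups is exactly the unramified subgroup up to the claimed $[K_v:\Q_p]$ defect. This is the step where hypotheses~\ref{hypo:KF} do all the work, and where the Brumer--Kramer semistability restriction (and Li's square-free-discriminant restriction) has to be replaced by something that still controls the wild part of the local image; getting the bound tight enough to yield exactly $+[K:\Q]$ rather than something larger is the delicate point.
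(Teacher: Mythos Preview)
Your overall architecture matches the paper's: sandwich $\Sel_2(E)$ between two explicit subgroups $C_*(E)\subset\Sel_2(E)\subset\tilde C(E)$ of $(A_K^\times/(A_K^\times)^2)_\square$, identify $C_*(E)\simeq\Cll(A_K,E)[2]$, and then bound the index by $2^{[K:\Q]}$. The parity argument over $\Q$ is also the paper's (they cite Monsky rather than Dokchitser--Dokchitser, but either suffices).

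The gap is in your upper bound. You attribute the defect $[K:\Q]$ to a sum $\sum_{v\mid 2}[K_v:\Q_2]$ of local $2$-adic contributions, comparing ``unramified away from $S$'' with ``genuinely everywhere unramified''. But that comparison bounds $[\tilde C(E):C_*(E)]$ via the injection of $\tilde C(E)/C_*(E)$ into $\prod_{v\mid 2}(A_{\Om_v}^\times/(A_{\Om_v}^\times)^2)_\square\big/\bigl(U_{4,v}/(A_{\Om_v}^\times)^2\bigr)$, and by Lemmas~\ref{lemma:Asq} and~\ref{lemma:squarecondition}(\ref{part:U4cardinality}) each factor has order $2^{2[K_v:\Q_2]}$, so this route gives only $2[K:\Q]$. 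The paper's proof of the sharp bound (Theorem~\ref{thm:M2order}) is \emph{not} $2$-adic: it maps both $\tilde C(E)$ and $\Cll(A_K,E)[2]$ to $\Cl(A_K)$, compares kernels, and computes everything via the archimedean sign map and Dirichlet's unit theorem for $\Om^\times$ and $A_\Om^\times$; the number $[K:\Q]=a+b+2c$ emerges from counting real and complex places of $K$. Your proposal also does not engage with the archimedean bookkeeping (the ``distinguished place'' $\tilde v$ at each real place of type~(ii), Remark~\ref{rem:distinguishedplace}), which is exactly what makes $\Cll(A_K,E)$ depend on $E$ and not only on $A_K$, and which is essential both for the lower inclusion (Proposition~\ref{prop:selmer inclusion}) and for the index computation.

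A variant of your idea does work: bound $[\Sel_2(E):C_*(E)]$ \emph{directly} rather than via $\tilde C(E)$. One checks using Lemma~\ref{lemma:imagekummerinfty} and Corollary~\ref{coro:kummerimage} that the local conditions defining $C_*(E)$ and $\Sel_2(E)$ coincide at every place not above $2$ (including all archimedean places, once the distinguished-place condition is unwound), so $\Sel_2(E)/C_*(E)\hookrightarrow\prod_{v\mid 2}\delta_v(E)\big/\bigl(U_{4,v}/(A_{\Om_v}^\times)^2\bigr)$; now each factor has order exactly $2^{[K_v:\Q_2]}$ by Lemma~\ref{lemma:E/2E} and Lemma~\ref{lemma:squarecondition}(\ref{part:U4cardinality}). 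This yields the bound, but it is not what you wrote, and it still requires the archimedean analysis you omitted.
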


The advantage of our results is twofold: on the one hand, we can get a
lower and upper bound which we expect to be sharp for general number
fields (we show this is the case in some examples).

On the other hand, our relaxed hypotheses allow us to consider
families of quadratic twists of elliptic curves: let for example
$E/\QQ$ be a rational elliptic curve satisfying
hypotheses~\ref{hypo:KF}, and let $p$ be a prime congruent to $1$
modulo $4$ which is inert or totally ramified in $A_K$. Then the
quadratic twist $E_p$ of $E$ by a character of conductor $p$ also
satisfies hypotheses \ref{hypo:KF}, hence our rank bound also applies
to its $2$-Selmer group. Studying the root number change from $E$ to
$E_p$ allows us to deduce that for a positive proportion of them, the
rank of their $2$-Selmer group is constant. In particular, all such
twists have precisely the same $2$-Selmer group (see
Theorems~\ref{thm:negativetwists} and ~\ref{thm:positivetwists}). Such
interesting phenomena has implication in distributions of ranks of
elliptic curves under quadratic twists and the order of the
Tate-Shafarevich group $\Sha(E_p)[2]$. For example, let $d_2(E)$ denote the $2$-Selmer rank of $E$ and define
\[
  N_r(E,X) = |\{\text{quadratic } L/\QQ \; : \; d_2(E^L) = r \text{ and }|\delta(L/\QQ)|<X\}|,
  \]
  where $E^L$ denotes the quadratic twist of $E$ corresponding to $L$ and $\delta(L/\QQ)$ is the discriminant of the extension $L/\QQ$. A direct application of our
  result proves the following Corollary.
  \begin{corotwist}
    Let $E/\QQ$ be an elliptic curve satisfying
    hypotheses~\ref{hypo:KF}, and suppose furthermore that either
    $\Disc(E)<0$ or $\Cl_+(A_\QQ)=\Cl(A_\QQ)$. Let $r \ge 0$, and
    suppose that $E$ has a quadratic twist by a prime inert in $A_\QQ$
    whose $2$-Selmer group has rank $r$.  Then
    $N_r(E,X) \gg X/\log(X)^{1-\alpha}$, where
    \[
      \alpha = \begin{cases}
        1/3 & \text{if }A_\QQ/\QQ \text{ is Galois},\\
        1/6 & \text{otherwise.}
        \end{cases}
\]
      \end{corotwist}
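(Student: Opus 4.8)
The plan is to deduce the corollary from Theorem~\ref{thm:bound} together with Theorems~\ref{thm:negativetwists} and~\ref{thm:positivetwists}, turning it into a sieve count of squarefree integers all of whose prime factors lie in a set of primes of Dirichlet density $\alpha$.

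First I would fix the prime $q$ provided by the hypothesis (after possibly replacing $q$ by $-q$, we take $q\equiv 1\bmod 4$), so that $E_q$ satisfies hypotheses~\ref{hypo:KF} and $d_2(E_q)=r$, and set
\[
  S=\{\,p\text{ prime}: p\equiv 1\bmod 4,\ p\text{ inert in }A_\QQ,\ p\nmid\Disc(E)\,\}.
\]
Since a quadratic twist does not change the $2$-division field, the cubic field attached to any twist $E_d$ is again $A_\QQ$; hence, as recalled in the Introduction, $E_{qp}$ satisfies hypotheses~\ref{hypo:KF} for $p\in S$, and iterating, so does $E_{qp_1\cdots p_m}$ for pairwise distinct $p_1,\dots,p_m\in S\setminus\{q\}$. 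I would then invoke Theorems~\ref{thm:negativetwists} and~\ref{thm:positivetwists} repeatedly — this is exactly the place where the hypothesis that $\Disc(E)<0$ or $\Cl_+(A_\QQ)=\Cl(A_\QQ)$ is used — to see that along this family the integer $\dim_{\FF_2}\Cll(A_\QQ,\cdot)[2]$ and the global root number are unchanged, so that by Theorem~\ref{thm:bound} (together with the $2$-parity identity $(-1)^{d_2(E')}=w(E')$ implicit in its last assertion) one gets $d_2(E_{qp_1\cdots p_m})=d_2(E_q)=r$.

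Next I would pass from twists to quadratic fields: for distinct $p_1,\dots,p_m\in S\setminus\{q\}$ the integer $d=q\,p_1\cdots p_m$ is squarefree and $\equiv 1\bmod 4$, so $\delta(\QQ(\sqrt d)/\QQ)=d$ and $E^{\QQ(\sqrt d)}=E_d$ has $d_2=r$; distinct $d$ give distinct fields, hence
\[
  N_r(E,X)\ \ge\ \#\{\,n\le X/q: n\text{ squarefree},\ \gcd(n,q)=1,\ p\mid n\Rightarrow p\in S\,\}.
\]
To compute the density $\alpha$ of $S$ I would apply the Chebotarev density theorem to $M(i)/\QQ$, where $M$ is the Galois closure of $A_\QQ/\QQ$: the primes inert in $A_\QQ$ are those whose Frobenius generates $\Gal(M/\QQ)\cong\ZZ/3$ (relative density $2/3$) when $A_\QQ/\QQ$ is Galois, and those whose Frobenius is a $3$-cycle in $\Gal(M/\QQ)\cong S_3$ (relative density $2/6=1/3$) otherwise; the condition $p\equiv 1\bmod 4$ is independent of the splitting in $M$ and contributes a further factor $1/2$, giving $\alpha=1/3$ and $\alpha=1/6$ respectively. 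Finally, since $S$ is a finite union of Chebotarev classes of positive density $\alpha$, Landau's theorem (the Selberg--Delange method; passing to squarefree $n$ coprime to $q$ changes only the constant) gives
\[
  \#\{\,n\le X/q: n\text{ squarefree},\ \gcd(n,q)=1,\ p\mid n\Rightarrow p\in S\,\}\ \sim\ c\,\frac{X}{(\log X)^{1-\alpha}},\qquad c>0,
\]
which proves the stated lower bound.

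I expect the main obstacle to be the middle step: verifying that twisting by primes of $S$ preserves the $2$-Selmer rank, i.e.\ that both $\dim_{\FF_2}\Cll(A_\QQ,E_d)[2]$ and the global root number stay constant along the whole family. The class-group part is what forces the extra hypothesis on $A_\QQ$, while the root-number part requires a careful local analysis at the primes of bad reduction of $E$ and at $2$ — this is where $p\equiv 1\bmod 4$ matters, as it makes the local root number of the twist at $p$ trivial. Both points, however, are contained in Theorems~\ref{thm:negativetwists} and~\ref{thm:positivetwists}, so for the purposes of the corollary they may simply be quoted.
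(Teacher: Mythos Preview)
Your overall strategy is sound and is precisely what the paper's one-line proof (Ikehara's tauberian theorem, following \cite{MR3954912}) is abbreviating: fix the seed twist $E_q$, enlarge the family by multiplying in primes from a set of density $\alpha$, verify that hypotheses~\ref{hypo:KF}, the group $\Cll(A_\QQ,\cdot)[2]$, and the root number remain constant along the family, then count by Selberg--Delange. The gap is in your choice of $S$.

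The condition $p\equiv 1\pmod 4$ does \emph{not} preserve the root number. For $p\nmid 2N_E$ the formula used in the paper gives $\epsilon(E)\,\epsilon(E_{p^*})=\chi_p(-N_E)$; your condition only forces $\chi_p(-1)=1$, so $\chi_p(-N_E)=\chi_p(N_E)$ is still unconstrained and will be $-1$ for half of your primes. Theorems~\ref{thm:negativetwists} and~\ref{thm:positivetwists} make this explicit: they split the inert primes according to whether $-\Delta(E)/N_E$ is a square modulo $p$, and only on one side is the sign preserved --- the residue of $p$ modulo $4$ plays no role there. Thus when you ``invoke Theorems~\ref{thm:negativetwists} and~\ref{thm:positivetwists} repeatedly'' the root-number conclusion does not follow for your $S$. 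The fix is to replace $S$ by
\[
    S_0=\bigl\{\,p\ \text{prime}:\ p\ \text{inert in}\ A_\QQ,\ p\nmid\Disc(E),\ \chi_p(-N_E)=1\,\bigr\}.
\]
For $d'=\prod_i p_i^{\,*}$ with distinct $p_i\in S_0\setminus\{q\}$ (automatically a fundamental discriminant, so the $p\equiv 1\bmod 4$ device is unnecessary) one gets $\chi_{d'}(-N_{E_q})=\chi_{d'}(-N_E)=1$, hence $\epsilon(E_{q^*d'})=\epsilon(E_{q^*})$, and the rest of your argument goes through verbatim. A Chebotarev computation in $M(\sqrt{-N_E})/\QQ$ shows $S_0$ has density at least $\alpha$ (exactly $\alpha$ generically, and $2\alpha$ in the degenerate cases where $-N_E$ is a square or $\QQ(\sqrt{-N_E})=\QQ(\sqrt{\Disc(A_\QQ)})$), so the final asymptotic is unchanged.
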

      When $\Disc(E)>0$ a similar result holds replacing $\alpha$ by
      $\alpha/2$.  Such results are important to understand the so
      called Goldfeld's conjecture.  In \cite{MR2660452} the authors
      study the problem of the variation of the $2$-Selmer group in
      quadratic twists families, and they obtain a little stronger result
      for any base field $K$ (see Theorem 1.4), although their
      techniques are slightly different from ours. In \cite{MR3954912}  the authors obtain similar results as ours over $\QQ$ (see the proofs of \cite[Lemma 5.9 and Lemma 5.10]{MR3954912} and  \cite[Theorem 1.12]{MR3954912}).
      
      An immediate application of the result is the following: suppose that $E/\QQ$
      is an elliptic curve with trivial $2$-Selmer group, and let
      $K/\QQ$ be the (infinite) polyquadratic extension obtained by
      composing all quadratic extensions in the hypothesis of
      Theorems~\ref{thm:negativetwists} and
      ~\ref{thm:positivetwists}. Then $E(K)$ is finitely generated
      (see Corollary~\ref{coro:inifiteextension}).

The article is organized as follows: Section 1 contains the local
computations of the Kummer map and its image, which are needed to
bound the $2$-Selmer group. Section 2 contains the main result
(Theorem~\ref{thm:bound}). Our main contributions are: we can work
with polynomials $F(x)$ which do not generate the whole ring of
integers of $A_K$ (a key fact for allowing quadratic twists), and also
we explain in detail how to handle the case of ``positive
discriminants'', i.e. the real places of $K$ where the discriminant of
$F(x)$ is positive. In order to treat this case we work with a
``narrow class group'' instead of a classical one.  Section 3 contains
the application of the main results to families of quadratic
twists. We stated two results (Theorems~\ref{thm:negativetwists} and
\ref{thm:positivetwists}) for elliptic curves over $\Q$ (which
historically received a lot of attention) but they have a similar
version over general number fields. At last, Section 4 includes many
examples of elliptic curves over number fields; the purpose of the examples is to show that the bounds obtained in
this article are sharp for different number fields. At the same time
we show that the lower bound and the upper bound do not hold when
twisting by primes not satisfying hypotheses~\ref{hypo:KF}.

\section{Kummer map}
Let us recall some general statements on the $2$-Selmer group on
elliptic curves (we refer to Section $2$ of \cite{Brumer}).  Let $K$
be a field of characteristic different from $2$, let $\bar{K}$ be a
Galois closure of $K$ and let $G_K=\Gal(\bar{K}/K)$. Let $E/K$ be an
elliptic curve of the form $$E: y^2= F(x)$$ for some monic cubic
square-free
polynomial $F(x) \in K[x]$. The following exact
sequence of $G_K$-modules
\[
    0 \longrightarrow
    E(\bar K)[2] \longrightarrow
    E(\bar K) \xrightarrow{\,\times 2\;}
    E(\bar K) \longrightarrow
    0
\]
  gives rise to a long exact sequence in cohomology. In particular, it induces an injective morphism called the Kummer map
  \[
      \delta_K:E(K)/2E(K) \hooklongrightarrow H^1(G_K,E(\bar{K})[2]).
\]
Let $A_K$ be the $K$-algebra $K[T]/(F(T))$. Then
$H^1(G_K,E(\bar{K})[2])$ is isomorphic to the subgroup of elements in
$A_K^\times/(A_K^\times)^2$ whose norm is a square in $K$ (see
\cite[p. 240]{MR199150}); let us denote by $\Asq$ such set. In
particular, we get an injective map
\[
\delta_K:E(K)/2E(K) \hooklongrightarrow \Asq.
\]
Explicitly, let $P \in E(K)$ and let $x(P)$ denote its first coordinate. Then,
\[
\delta_K(P) = x(P)-T,
\]
whenever $x(P)-T$ is invertible in $A_K$ (see \cite[p. 716-717]{Brumer}).
Note that the algebra $A_K$ and the map $\delta_K$ do not depend on
the choice of model for $E$. Moreover, we denote by $\delta_{K}(E)$
the image of the Kummer map and remark that it is a hard problem to
describe it.
%(which imposes our hypothesis on the elliptic curve $E$ {\color{red}(D: esta frase entre paréntesis no parece útil)}).

\subsection{The case $K$ a complete archimedean field}
Let $\Delta$ denote the discriminant of $F(x)$ and $K$ be a complete
archimedean field. Then clearly
\[
  A_K \simeq \begin{cases}
    \RR \times \CC & \text{ if } K = \RR \text{ and } \Delta<0,\\
    \RR \times \RR \times \RR & \text{ if } K = \RR \text{ and } \Delta>0,\\
     \CC \times \CC \times \CC & \text{ if } K = \CC.
    \end{cases}
  \]
In the second case,
let $\theta_1 < \theta_2 < \theta_3$ denote the roots of $F(x)$,
and take the isomorphism sending $T$ to $(\theta_1,\theta_2,\theta_3)$.
\begin{lemma}
\label{lemma:imagekummerinfty}
  For complete archimedean fields, the following holds:
  \begin{itemize}
  
  \item[(i)] If $K=\RR$ and $\Delta<0$, $\delta_\RR(E) = \{(1,1)\}$,
    
  \item[(ii)] If $K=\RR$ and $\Delta>0$, $\delta_\RR(E) = \<(1,-1,-1)>$,
  
  \item[(iii)] If $K=\CC$,  $\delta_\CC(E) = \{(1, 1, 1)\}$.
  \end{itemize}
\end{lemma}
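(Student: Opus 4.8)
The plan is to identify $\delta_K(E)$ with $E(K)/2E(K)$, compute this group from the archimedean topology of $E(K)$, and then read off the resulting classes in $A_K^\times/(A_K^\times)^2$ from the explicit formula $P\mapsto x(P)-T$. For $K=\CC$ the group $E(\CC)\cong\CC/\Lambda$ is divisible, so $E(\CC)/2E(\CC)=0$; since $A_\CC\cong\CC\times\CC\times\CC$ and every complex number is a square, the image is the trivial class $(1,1,1)$, which is (iii). For $K=\RR$ the set $E(\RR)$ is a compact one-dimensional real Lie group, hence isomorphic to $\RR/\ZZ$ or to $\RR/\ZZ\times\ZZ/2\ZZ$; it is classical that it is connected precisely when the discriminant of the Weierstrass model (equivalently, $\Delta$) is negative, i.e. when $F$ has a single real root. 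So if $\Delta<0$ then $E(\RR)\cong\RR/\ZZ$ is divisible, $E(\RR)/2E(\RR)=0$, and since $A_\RR\cong\RR\times\CC$ the image is the trivial class $(1,1)$, proving (i).

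It remains to treat $K=\RR$ with $\Delta>0$. Then $F$ has three real roots $\theta_1<\theta_2<\theta_3$, $E(\RR)\cong\RR/\ZZ\times\ZZ/2\ZZ$, and $E(\RR)/2E(\RR)\cong\ZZ/2\ZZ$; hence $\delta_\RR(E)$ has order $2$ and the task is to name its nontrivial element. Using $A_\RR\cong\RR^3$ with $T\mapsto(\theta_1,\theta_2,\theta_3)$, a point $P\in E(\RR)$ with $y(P)\neq 0$ satisfies $\delta_\RR(P)=\big(\sign(x(P)-\theta_1),\sign(x(P)-\theta_2),\sign(x(P)-\theta_3)\big)$ in $(\RR^\times/(\RR^\times)^2)^3=\{\pm1\}^3$. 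The relation $F(x(P))=y(P)^2\ge 0$ forces $x(P)\in[\theta_1,\theta_2]\cup[\theta_3,\infty)$; the unbounded branch $x(P)\ge\theta_3$ is the connected component containing $O$, so its points map to $(1,1,1)$, whereas on the bounded ``egg'' one has $\theta_1<x(P)<\theta_2$ and the sign vector $(1,-1,-1)$, whose norm $(-1)(-1)=1$ is indeed a square. Since the egg is exactly the nonidentity component, a non-$2$-torsion point on it represents the nontrivial class of $E(\RR)/2E(\RR)$, and therefore $\delta_\RR(E)=\<(1,-1,-1)>$, which is (ii).

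The only steps requiring genuine care are the two classical topological inputs — that $E(\RR)$ has two connected components exactly when $\Delta>0$, and that a compact connected one-dimensional abelian Lie group is divisible — together with the bookkeeping of which coordinate of $A_\RR$ corresponds to which root; there is no analytic difficulty beyond this. If one prefers to avoid points with $y(P)\neq 0$, one may instead evaluate $\delta_\RR$ at the $2$-torsion point $(\theta_1,0)$ through the standard recipe $\delta_\RR\big((\theta_1,0)\big)=\big(F'(\theta_1),\theta_1-\theta_2,\theta_1-\theta_3\big)$, replacing the vanishing coordinate by $F'(\theta_1)$, and the identical sign count again yields $(1,-1,-1)$.
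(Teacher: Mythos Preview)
Your proof is correct and follows essentially the same approach as the paper: in cases (i) and (iii) you argue that $E(K)/2E(K)$ is trivial, and in case (ii) you compute the image of a point $P$ with $\theta_1<x(P)<\theta_2<\theta_3$ to obtain the sign vector $(1,-1,-1)$. The paper's proof is a two-sentence sketch of exactly this, so your version simply fills in the standard topological justification (divisibility of $\CC/\Lambda$ and of the identity component of $E(\RR)$) that the paper leaves implicit.
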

\begin{proof}
  In cases (i) and (iii) $E(K)/2E(K)$ is trivial.
  In case (ii) $E(K)/2E(K)$ has order $2$, and a point $P$ with
  $\theta_1 < x(P) < \theta_2 < \theta_3$ maps to $(1,-1,-1)$ up to squares
  (see also \cite[Proposition 3.7]{Brumer}).
\end{proof}

\begin{remark}
\label{rem:distinguishedplace}
  When $K=\RR$ and $\Delta >0$, $A_K$ has three real places, and one of
  them is distinguished, as it is the unique one satisfying that the
  composition of $\delta_\RR$ with its projection is
  trivial. Lemma~\ref{lemma:imagekummerinfty} states that when the
  roots of $F(T)$ are ordered, such place corresponds to the first
  one, but for a general elliptic curve $E/\RR$, we can always talk of such
  distinguished place.
\end{remark}

\subsection{The case $K$ is a finite extension of
\texorpdfstring{$\Q_p$}{ℚₚ}}
For the rest of this section we assume that $K$ is a finite extension of
$\QQ_p$. Let $\Om$ denote its ring of integers, $\id{p}$ its maximal
ideal, $\pi$ a generator of $\id{p}$ and $k=\Om/\id{p}$ its residue field.

\begin{lemma}
\label{lemma:E/2E}
  The order of $\delta_K(E)$ equals $[\Om:2\Om]\cdot \bigl|E(K)[2]\bigr|$.
  % In particular,
  % \[
  %   \# \delta_K(E) = \begin{cases}
  %     |E(K)[2]| & \text{ if }\Char(k) \neq 2,\\
  %     2^{[K:\QQ_2]}|E(K)[2]| & \text{ it }\Char(k) = 2.
  %     \end{cases}
  %   \]
  \end{lemma}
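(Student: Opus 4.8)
The plan is to reduce the statement to the structure theory of $E(K)$ as a $p$-adic analytic group. Since the Kummer map $\delta_K$ is injective and factors through $E(K)/2E(K)$, we have $\#\delta_K(E) = \#\bigl(E(K)/2E(K)\bigr)$, so it suffices to prove the identity $\#\bigl(E(K)/2E(K)\bigr) = [\Om:2\Om]\cdot\bigl|E(K)[2]\bigr|$.

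First I would invoke the filtration of $E(K)$ coming from the formal group: the kernels of reduction $E(K)\supseteq E_1(K)\supseteq E_2(K)\supseteq\cdots$ have $E(K)/E_1(K)$ finite (it embeds in the points of the reduction over the finite field $k$) and $E_n(K)/E_{n+1}(K)\cong (k,+)$ for $n\ge 1$, so $C:=E(K)/E_n(K)$ is finite for every $n$. Choosing $n$ large enough, concretely $n> e/(p-1)$ with $e$ the ramification index of $K/\QQ_p$, the formal logarithm converges and yields an isomorphism of topological groups $E_n(K)\xrightarrow{\ \sim\ }(\id{p}^n,+)$. The latter is a nonzero ideal of $\Om$, hence a free $\ZZ_p$-module of rank $d=[K:\QQ_p]$, so $E_n(K)$ is isomorphic as an abelian group to $(\Om,+)$; in particular it is torsion-free and $\#\bigl(E_n(K)/2E_n(K)\bigr)=\#\bigl(\Om/2\Om\bigr)=[\Om:2\Om]$ (which equals $1$ when $p$ is odd and $2^{d}$ when $p=2$, so the identity is consistent in both cases).

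Next I would apply the snake lemma to multiplication by $2$ on the short exact sequence $0\to E_n(K)\to E(K)\to C\to 0$. Because $E_n(K)[2]=0$, this produces an exact sequence
\[
  0\longrightarrow E(K)[2]\longrightarrow C[2]\longrightarrow E_n(K)/2E_n(K)\longrightarrow E(K)/2E(K)\longrightarrow C/2C\longrightarrow 0 .
\]
Taking the alternating product of orders and using the elementary fact that $\#C[2]=\#\bigl(C/2C\bigr)$ for the finite abelian group $C$, the two terms involving $C$ cancel, leaving $\#\bigl(E(K)/2E(K)\bigr)=\#\bigl(E_n(K)/2E_n(K)\bigr)\cdot\bigl|E(K)[2]\bigr|=[\Om:2\Om]\cdot\bigl|E(K)[2]\bigr|$.

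The only non-formal ingredient is the structural fact that, for $n$ large, $E_n(K)$ is a finite-index subgroup of $E(K)$ isomorphic to the additive group $(\Om,+)$; this is classical (see Section~2 of \cite{Brumer} or \cite[Ch.~VII]{MR2514094}), the convergence condition $n>e/(p-1)$ being exactly what makes the formal logarithm an isomorphism. I do not anticipate any real obstacle beyond correctly bookkeeping the orders in the six-term exact sequence and treating the cases $p$ odd and $p=2$ uniformly through the quantity $[\Om:2\Om]$.
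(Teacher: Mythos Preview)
Your argument is correct and is precisely the standard one: the paper itself gives no proof but merely cites \cite[Lemma~3.1]{Brumer}, and what you have written is essentially that lemma unpacked (formal-group filtration to identify a finite-index subgroup $E_n(K)\cong(\Om,+)$, then the snake lemma for multiplication by~$2$). One small imprecision: it is $E_0(K)/E_1(K)$, not $E(K)/E_1(K)$, that embeds into the nonsingular $k$-points of the reduction; the further quotient $E(K)/E_0(K)$ is the finite component group, so your claim that $C=E(K)/E_n(K)$ is finite is still valid, but the justification should be adjusted.
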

\begin{proof}
  See Lemma 3.1 of \cite{Brumer}.
\end{proof}
Let $A_\Om$ be the ring of integers of $A_K$.
\begin{remark}
\label{rem:norm}
    Since $A_K$ is isomorphic to a product of local fields,
  $A_\Om$ is isomorphic to the product of the ring of integers of such
  fields. Furthermore, since $[A_K:K]=3$, the norm map
  $\norm: A_\Om^\times/(A_\Om^\times)^2 \to \Om^\times/(\Om^\times)^2$
  is surjective (it is the identity on the class of elements of $\Om^\times$).
%In particular, for almost all primes, it equals $\Om[T]/(F(T))$.
\end{remark}

Denote $\AsqO$ the subgroup of elements in
$A_\Om^\times/(A_\Om^\times)^2$ with square norm.
There is a natural inclusion $\AsqO \subset \Asq$.
\begin{lemma}
\label{lemma:Asq}
  The order of $\AsqO$ equals $[\Om:2\Om]^2\cdot\bigl|E(K)[2]\bigr|$.
\end{lemma}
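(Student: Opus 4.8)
The plan is to compute the order of $\AsqO$ by relating it to $A_\Om^\times/(A_\Om^\times)^2$ via the norm map, and then computing the order of the latter group locally. First I would observe that there is an exact sequence
\[
  0 \longrightarrow \AsqO \longrightarrow A_\Om^\times/(A_\Om^\times)^2 \xrightarrow{\;\norm\;} \Om^\times/(\Om^\times)^2,
\]
where the last map is surjective by Remark~\ref{rem:norm}. Hence $|\AsqO| = |A_\Om^\times/(A_\Om^\times)^2| \cdot |\Om^\times/(\Om^\times)^2|^{-1}$. So the task reduces to computing $|A_\Om^\times/(A_\Om^\times)^2|$ and $|\Om^\times/(\Om^\times)^2|$.

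For the second factor, the standard structure theory of units of a $p$-adic field gives $\Om^\times \simeq \mu \times \ZZ_p^{[K:\Q_p]}$ for a finite cyclic group $\mu$ of order prime to $p$ times the $p$-power roots of unity; squaring on the free $\ZZ_p$-part contributes a factor $[\Om:2\Om] = |2|_K^{-1}$ when $p=2$ and nothing when $p$ is odd, while squaring on the torsion contributes $|E(K)[2]|$-type factors. Rather than redo this, I would invoke the identity $|\Om^\times/(\Om^\times)^2| = [\Om:2\Om]\cdot |\mu_2(K)|$, which is exactly the computation underlying Lemma~\ref{lemma:E/2E} (indeed $|\delta_K(E)| = [\Om:2\Om]\cdot|E(K)[2]|$ and for the field $K$ itself $|K^\times/(K^\times)^2 \text{ integral part}|$ behaves the same way). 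Applying the same structural fact to each local factor of $A_\Om$ and multiplying, $|A_\Om^\times/(A_\Om^\times)^2| = \prod_i [\Om_i : 2\Om_i]\cdot|\mu_2(K_i)| = [\Om:2\Om]^{?}\cdot\prod_i |\mu_2(K_i)|$, where the exponent on $[\Om:2\Om]$ comes from $\sum_i [K_i:\Q_p] = [A_K:K]\cdot[K:\Q_p] = 3[K:\Q_p]$, combined with $[\Om:2\Om] = |2|_K^{-1}$ raised to $[K:\Q_p]$... so I must be careful: the product of the $[\Om_i:2\Om_i]$ equals $|2|_{A_K}^{-1} = |2|_K^{-3} = [\Om:2\Om]^3$ only if $p=2$, and is $1$ if $p$ is odd, matching $[\Om:2\Om]$ in both regimes appropriately. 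Meanwhile $\prod_i|\mu_2(K_i)|$ counts the $2$-torsion in $A_K^\times$, which is precisely $|E(\bar K)[2]|$-related: the roots $\theta_i$ of $F$ generate the factors, and $-1$ is a square in $K_i$ or not; summing up, $\prod_i |\mu_2(K_i)| = [\Om:2\Om]\cdot|E(K)[2]|$ by the same reasoning as Lemma~\ref{lemma:E/2E} applied to the algebra. Combining, $|\AsqO| = [\Om:2\Om]^3\cdot|E(K)[2]| \cdot ([\Om:2\Om])^{-1}\cdot(\text{correction})$; the bookkeeping should collapse to $[\Om:2\Om]^2\cdot|E(K)[2]|$.

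The main obstacle I anticipate is the careful bookkeeping of the $2$-torsion and the $[\Om:2\Om]$ factors across the local factors $A_K \simeq \prod_i K_i$, especially keeping track of which factors of $A_\Om^\times/(A_\Om^\times)^2$ survive in the kernel of the norm. The cleanest route is probably \emph{not} to compute $|A_\Om^\times/(A_\Om^\times)^2|$ from scratch but instead to compare with Lemma~\ref{lemma:E/2E}: we have $|\delta_K(E)| = [\Om:2\Om]\cdot|E(K)[2]|$ sitting inside $\Asq$, and one expects $|\Asq| = [\Om:2\Om]^2\cdot|E(K)[2]|\cdot|\Om^\times/(\Om^\times)^2|$ by a parallel unramified-versus-ramified case analysis, with $\AsqO$ being the ``integral'' part whose index in $\Asq$ is exactly $|\Om^\times/(\Om^\times)^2| = [\Om:2\Om]\cdot|\mu_2(K)|$. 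I would split into the cases $p$ odd (where $[\Om:2\Om]=1$, everything is a product of unramified contributions, and the count is pure counting of squares in residue fields) and $p=2$ (where the $[\Om:2\Om]$ factors are nontrivial and one uses that the norm $A_\Om^\times \to \Om^\times$ is surjective on units modulo squares together with the local structure of $A_\Om$). In each case the result $[\Om:2\Om]^2\cdot|E(K)[2]|$ falls out after the dust settles; the odd-$p$ case is essentially immediate and the $p=2$ case is where the real work lies.
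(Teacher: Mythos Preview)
Your overall strategy --- use the exact sequence
\[
  0 \longrightarrow \AsqO \longrightarrow A_\Om^\times/(A_\Om^\times)^2 \xrightarrow{\;\norm\;} \Om^\times/(\Om^\times)^2 \longrightarrow 0
\]
(surjectivity from Remark~\ref{rem:norm}) and then compute the two outer orders --- is exactly the paper's approach. The paper, however, carries out the bookkeeping in a single clean stroke: writing $A_K\simeq L_1\times\dotsb\times L_t$ and $A_\Om\simeq R_1\times\dotsb\times R_t$, it invokes the standard formula $[R^\times:(R^\times)^2]=2\,[R:2R]$ for the ring of integers $R$ of any local field (O'Meara, Proposition~63:9). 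This immediately gives $[A_\Om^\times:(A_\Om^\times)^2]=2^t\,[\Om:2\Om]^3$ and $[\Om^\times:(\Om^\times)^2]=2\,[\Om:2\Om]$, so $|\AsqO|=2^{t-1}[\Om:2\Om]^2$. The final observation is simply that $t-1$ equals the number of $K$-rational roots of $F$, i.e.\ $2^{t-1}=|E(K)[2]|$.

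Your execution goes astray in the middle: you write $\prod_i|\mu_2(K_i)|=[\Om:2\Om]\cdot|E(K)[2]|$, but this is false. Each $K_i$ is a field of characteristic $\neq 2$, so $|\mu_2(K_i)|=2$ regardless of $p$, and the product is $2^t=2\cdot|E(K)[2]|$, not $[\Om:2\Om]\cdot|E(K)[2]|$. (For $p$ odd your formula would give $|E(K)[2]|$ rather than $2|E(K)[2]|$; for $[K:\QQ_2]>1$ it is off by a different factor.) Once this is corrected, your computation collapses exactly as the paper's does, and the detour through ``comparing with Lemma~\ref{lemma:E/2E}'' and the proposed case split on $p$ odd versus $p=2$ become unnecessary: the formula $[R^\times:(R^\times)^2]=2\,[R:2R]$ already handles both cases uniformly.
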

\begin{proof}
  The $K$-algebra $A_K$ is isomorphic to a product of fields
  $L_1 \times \dotsb \times L_t$, where $1\leq t \leq 3$.
  Let $R_i$ be the ring of integers of
  $L_i$, so that $A_\Om \simeq R_1\times\dotsb\times R_t$.
  By \cite[Proposition
  63:9]{MR1754311} we have
  $[\Om^\times:(\Om^\times)^2]=2\,[\Om:2\Om]$
  and 
  $[R_i^\times:(R_i^\times)^2]=2\,[R_i:2R_i]$.

  Since $A_K$ has dimension 3 over $K$, we have
  $\prod \,[R_i:2R_i] = [\Om:2\Om]^3$. It follows that
  $[A_\Om^\times:(A_\Om^\times)^2] = 2^t\,[\Om:2\Om]^3$.
  Since $\norm: A_\Om^\times/(A_\Om^\times)^2 \to \Om^\times/(\Om^\times)^2$
  is surjective, its kernel $\AsqO$ has order
  $[A_\Om^\times:(A_\Om^\times)^2] \mathbin{/} [\Om^\times:(\Om^\times)^2] = 2^{t-1}
  \,[\Om:2\Om]^2$.

  The result follows by noting that $2^{t-1} = \bigl|E(K)[2]\bigr|$.
\end{proof}

\begin{defi}
\label{def:hyp}
    We say that $E:y^2=F(x)$ satisfies (\dag)
    if $F(x)\in\Om[x]$ is a monic cubic square-free polynomial and
    any of the following conditions holds:
    \begin{enumerate}[(\dag.i)]
    \item $A_K$ is a field extension of $K$, or
    \item $A_\Om = \Om[T]/(F(T))$, or
    \item $\Char(k)>2$ and $[E(K):E_0(K)]$ is odd, where $E_0(K)$ is the subgroup of the points of $E(K)$ whose reduction is non-singular, or
    \item $\Char(k)=2$, $K/\QQ_2$ is unramified, and $E$ has good
        reduction.
  \end{enumerate}
\end{defi} \begin{remark} When $\Char(k)>2$ condition
    $(\dagger. i)$ or condition $(\dagger. ii)$ imply
    $(\dagger. iii)$. To see it let
    $I_K \subset \mathrm{Gal}(\overline{K}/K)$ be the inertia subgroup
    and consider the following three possibilities
    $E[2](\overline{K})^{I_K}= \{0\}$ or
    $E[2](\overline{K})^{I_K}\cong \Z/2\Z$ or
    $E[2](\overline{K})^{I_K}\cong (\Z/2\Z)^2$. If
    $E[2](\overline{K})^{I_K}= \{0\}$ then $[E(K):E_0(K)]$ is odd by
    \cite[Lemma 4]{MR2867919}.  Observe that if $(\dagger. i)$ is true
    then $E[2](\overline{K})^{I_K}\cong \Z/2\Z$ is not possible. If
    $E[2](\overline{K})^{I_K}\cong \Z/2\Z$ and $(\dagger. ii)$ holds
    $v_{\mathfrak{p}}(\mathrm{disc}(F(x)))= 1$ hence
    $[E(K): E_0(K)]=1$ by Tate's algorithm (\cite{MR0393039}). Finally if
    $E[2](\overline{K})^{I_K}\cong (\Z/2\Z)^2$ then hypothesis
    $(\dagger. i)$ or $(\dagger. ii)$ implies that $E$ has good
    reduction hence $(\dagger. iii)$ is clearly true.

  \end{remark}
%{\color{blue}\begin{remark} The first condition implies the third condition. 
%\end{remark}
%}
% \begin{remark}
%   The second condition implies that if $F(x)$ has three roots in $\Om$, then they have different reduction modulo $\id{p}$.
% \label{rem:differentroots}
% \end{remark}
\begin{thm}
\label{thm:inclusion}
  If $E$ satisfies \textup{(\dag)} then $\delta_K(E) \subset \AsqO$.
\end{thm}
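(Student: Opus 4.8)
The plan is to recast membership in $\AsqO$ as a parity condition on valuations, and then to verify it clause by clause in \textup{(\dag)}. Write $A_K = L_1\times\dots\times L_t$ with each $L_i$ a finite extension of $K$, let $R_i\subset L_i$ be the ring of integers, $v_i$ the normalized valuation of $L_i$ and $f_i$ its residue degree, so $A_\Om = R_1\times\dots\times R_t$; let $v$ be the normalized valuation of $K$. Taking valuations gives a short exact sequence $1\to A_\Om^\times/(A_\Om^\times)^2\to A_K^\times/(A_K^\times)^2\to\bigoplus_i\ZZ/2\ZZ\to 0$, and intersecting with the square-norm condition one sees that a class in $\Asq$ lies in $\AsqO$ if and only if it has a representative all of whose components have even valuation. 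So it suffices to produce such a representative of $\delta_K(P)$ for each $P\in E(K)$. Writing $x_0=x(P)$ and using $\delta_K(P)=x_0-T$ (and, for a $2$-torsion point $(e,0)$, the analogous element with $F'(e)$ in the factor $K[T]/(T-e)$ and $e-T$ in the complement): if $v(x_0)<0$ the Weierstrass equation forces $v(x_0)$ to be even, hence every $v_i(x_0-\theta_i)=e(L_i/K)\,v(x_0)$ is even; and if $v(x_0)\ge 0$ with $v(F(x_0))=0$ then $N_{A_K/K}(x_0-T)=F(x_0)\in\Om^\times$, so $x_0-T\in A_\Om^\times$. Thus only the case $x_0\in\Om$, $v(F(x_0))>0$ remains; there $\sum_i f_i\,v_i(x_0-\theta_i)=v(F(x_0))$ is even while each $f_i\mid 3$ is odd, so it is enough to know that \emph{at most one} index $i$ has $v_i(x_0-\theta_i)>0$.

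This ``single place above $\bar x_0$'' property is automatic under clause \textup{(\dag.i)}, where $t=1$. It also holds under clause \textup{(\dag.ii)}: if $A_\Om=\Om[T]/(F)$, then $A_\Om$ is a product of the $R_i$, which forces the reductions modulo $\id p$ of the irreducible factors of $F$ to be pairwise coprime in $k[T]$; hence for $x_0\in\Om$ at most one $\theta_i$ can be congruent to $x_0$ modulo $\id m_i$, and the same coprimality settles the $2$-torsion points directly. In both cases the parity bookkeeping above concludes.

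For clause \textup{(\dag.iii)} we have $\Char(k)>2$, hence $[\Om:2\Om]=1$; after a change of model (which affects neither $A_\Om$ nor the class $\delta_K(P)$) we may take $y^2=F(x)$ to be a minimal Weierstrass model. Since $[E(K):E_0(K)]$ is odd, multiplication by $2$ is an automorphism of $E(K)/E_0(K)$, so by the snake lemma $E_0(K)/2E_0(K)\to E(K)/2E(K)$ is an isomorphism and $\delta_K(E(K))=\delta_K(E_0(K))$; thus we may assume $P\in E_0(K)$. Such a $P$ reduces to the point at infinity (the case $v(x_0)<0$) or to a smooth affine point of $y^2=\bar F(x)$, and since $\Char k\neq 2$ the singular points of that curve are exactly the $(\bar\alpha,0)$ with $\bar\alpha$ a multiple root of $\bar F$; hence $\bar x_0$ is not a multiple root of $\bar F$, so either $\bar F(\bar x_0)\neq 0$ (and $F(x_0)\in\Om^\times$) or $\bar x_0$ is a simple root of $\bar F$. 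In either subcase at most one $\theta_i$ is congruent to $x_0$, and we conclude as before.

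The remaining clause \textup{(\dag.iv)} is the genuinely delicate point, and I expect it to be the main obstacle. When $\Char(k)=2$, $K/\QQ_2$ is unramified and $E$ has good reduction, the model $y^2=F(x)$ is necessarily non-minimal — a short computation gives $v(\Disc(F))\equiv 8\pmod{12}$, so $\bar F$ always has a repeated root and the dichotomy used for \textup{(\dag.iii)} breaks down. The plan here is to work with a good Weierstrass model $\mathcal E/\Om$ and to treat $\delta_K$ on the two pieces of $E(K)$ separately: on the formal group $\hat{\mathcal E}(\id p)=E_1(K)$, whose points are exactly those with $v(x_0)<0$ and so are already handled, and on the finite quotient $E(K)/E_1(K)\simeq\tilde{\mathcal E}(k)$, where only finitely many residue classes need be examined. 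The hypothesis that $K/\QQ_2$ is unramified is precisely what keeps the relevant $2$-adic estimates, both on $v_i(x_0-\theta_i)$ and on the formal group, under control — it is also what underlies the identification of the image of the Kummer map with the ``finite part'' of $H^1(K,E[2])$ attached to the good reduction. Away from this case, the proof is exactly the elementary valuation bookkeeping assembled in the first step.
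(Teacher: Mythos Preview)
Your treatment of clauses (\dag.i)--(\dag.iii) is close to the paper's own argument (the paper simply cites Brumer--Kramer for (\dag.iii) rather than reproving it as you do), but it contains one slip: the assertion that each residue degree $f_i$ divides $3$ is false when $A_K\simeq K\times L$ with $L/K$ quadratic unramified, since then $f_L=2$. Fortunately your conclusion survives, because under (\dag.ii) such a factor necessarily has $\bar\theta_L\notin k$ (otherwise $\Om[\theta_L]\subsetneq R_L$), so $v_L(x_0-\theta_L)=0$ for every $x_0\in\Om$; and under (\dag.iii) your own simple-root analysis already forces the unique positive-valuation index to come from a linear factor of $F$ over $K$, hence to have $f_j=1$. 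So this is a repairable logical gap rather than a fatal one.

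The genuine gap is clause (\dag.iv), which you explicitly leave as a plan. The decisive observation you are missing is the supersingular/ordinary dichotomy. When $E$ has good \emph{supersingular} reduction, all of $E[2]$ lies in the kernel of reduction; a short Newton-polygon argument on $[2]$ in the formal group (using that $K/\QQ_2$ is unramified, so $v(2)=1$) then shows $E(K)[2]=0$ and hence that $A_K/K$ is a cubic field extension --- so (\dag.i) already applies and there is nothing further to prove. Only the \emph{ordinary} case requires additional work, and there the paper invokes Brumer--Kramer's Proposition~3.6 rather than attempting a residue-class enumeration. Your outline also has a coordinate mismatch: the identification of $E_1(K)$ with the points of negative $x$-valuation refers to the \emph{minimal} model, whereas the parity bookkeeping for $\delta_K(P)=x(P)-T$ uses the non-minimal model $y^2=F(x)$; without tracking the change of variables the claim that $E_1(K)$ ``is already handled'' is not justified, and ``finitely many residue classes need be examined'' is not yet a proof.
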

\begin{proof} A similar result (for particular cases) is given in Corollary 3.3, Proposition
  3.4, Lemma 3.5, Proposition 3.6, and Lemma 4.2 of \cite{Brumer},
  and in Lemma 2.12 of \cite{MR3934463}.
  % We
  % follow the strategy of \cite[Lemma 2.12]{MR3934463}. It is enough to
  % prove that if $P \in E(K)$ then $x(P)-T$ has even valuation.

  Suppose first that $E$ satisfies (\dag.i), i.e.
  $A_K$ is a cubic field extension of $K$ (either
  unramified or totally ramified).  Let
  %$\id{p}$ denote the maximal ideal of $\Om$ and 
  $\id{P}$ denote the maximal ideal of $A_\Om$. Let
  $P=(x,y) \in E(K)$.  The equality $y^2 = F(x)$ implies that
  $2v_{\id{P}}(y)=3v_{\id{P}}(x-T)$, hence $v_{{\id{P}}}(x-T)=2n$ is
  even. If $\tilde\pi$ denotes a local uniformizer in $A_\Om$ then
  $\tilde\pi^{-2n}(x-T) \in A_\Om^\times$ so that
  $\delta_K(P)\in\AsqO$.

  %as claimed (recall that the ring
  %of integers is precisely the elements with non-negative valuation).

  Suppose now that $E$ satisfies (\dag.ii). If $A_K$ is a field the
  result is already proven, hence we can restrict to the cases
  $A_K \simeq K \times K \times K$ or $A_K \simeq K \times L$, for
  $L/K$ a quadratic extension.  In the case
  $A_K\simeq K\times K\times K$, $F(x)=(x-c_1)(x-c_2)(x-c_3)$
  with $c_i\in\Om$, and hypothesis (\dag.ii) implies $v_\id{p}(c_i-c_j)=0$ for
  $i\neq j$.  Let $P=(x,y)\in E(K)$. If $v_\id{p}(x)<0$ then
  $v_\id{p}(x-c_i)=v_\id{p}(x)$, hence $2v_\id{p}(y)=3v_\id{p}(x)$ and
  so $v_\id{p}(x-c_i)$ is even for all $i$.  Otherwise $v_\id{p}(x)\geq0$
  and $v_\id{p}(x-c_i)\geq0$ for all $i$.  Since
  $v_\id{p}(c_i-c_j)=0$ for $i\neq j$, at least two terms in the right
  hand side of the equality 
  $$2v_\id{p}(y)=v_\id{p}(x-c_1)+v_\id{p}(x-c_2)+v_\id{p}(x-c_3),$$ are
  0,
  %because of (\dag.ii),
  hence the third term must also be even.
  In either case %it follows that
  $\delta_K(P)=(\pi^{-v_{\id{p}}(x-c_1)}(x-c_1),\pi^{-v_{\id{p}}(x-c_2)}(x-c_2),\pi^{-v_{\id{p}}(x-c_3)}(x-c_3))\in\AsqO$.

  %\gt{Tengo un poco de duda con esto cuando $p=2$}
  In the case $A_K\simeq K\times L$ for a quadratic
  extension $L/K$ (either unramified or ramified),
  $F(x)=(x-c)(x-\gamma)(x-\gamma')$,
  with $c\in\Om$ and $\gamma\in\Om_L$. Let $v_{\id{p}}$ denote the valuation of $L$ which extends that of $K$. Hypothesis (\dag.ii) implies $v_\id{p}(c-\gamma)=0$
  and $v_\id{p}(x-\gamma)\in\set{0,\frac12}$ if $x\in\Om$.
  %write $T=(c,\gamma)$ 
  %so that $F(x)=(x-c)(x-\gamma)(x-\gamma')$.
  Let $P=(x,y)\in E(K)$, then
  $$2v_\id{p}(y)=v_\id{p}(x-c)+2v_\id{p}(x-\gamma).$$
  %where $\id{p}$ is the maximal ideal of $\Om$.
  When $v_\id{p}(x)<0$ this implies
  $v_\id{p}(x-c)=v_\id{p}(x-\gamma)=v_\id{p}(x)$ is even.
  When $v_\id{p}(x)\geq0$ then $v_\id{p}(x-c)\geq0$ and
  $v_\id{p}(x-\gamma)\geq0$, and at least one must be $0$
  since $v_\id{p}(c-\gamma)=0$. It follows that
  $v_\id{p}(x-\gamma)\in\ZZ\cap\set{0,\frac12}$.
  Hence $v_\id{p}(x-\gamma)=0$ and
  $v_\id{p}(x-c)$ is also even.
  In either case we have $v_\id{p}(x-c)$ and $v_\id{p}(x-\gamma)$ are
  both even, hence
  $\delta_K(P)=(\pi^{-v_\id{p}(x-c)}(x-c),
  \pi^{-v_\id{p}(x-\gamma)}(x-\gamma))\in\AsqO$.

  %$\delta_K(P)=(\pi^{-v_{\id{p}}(x-c_1)}(x-c_1),\pi^{-v_{\id{p}}(x-c_2)}(x-c_2),\pi^{-v_{\id{p}}(x-c_3)}(x-c_3))\in\AsqO$.
  % When $v_\id{p}(x-\gamma)>0$ then, by (\dag.ii),
  % we must have $L/K$
  % ramified and $c\not\equiv\gamma\pmod{p}$.
  % Therefore $v_\id{p}(x-c)=0$ and
  % $2v_\id{p}(y)=2v_\id{p}(x-\gamma)=v_\id{P}(x-\gamma)$ where
  % $\id{P}$ is the maximal ideal of $\Om$.
  % When $A_K$ is not a field, $A_\Om = \Om[T]/(F(T))$ and the same proof
  % given in \cite[Lemma 2.12]{MR3934463} applies (see also Remark 2.13
  % in loc. cit.).

  When $E$ satisfies (\dag.iii) the result
  is given in \cite[Corollary 3.3]{Brumer}.
  Finally, suppose $E$ satisfies (\dag.iv).
  When $E$ has supersingular reduction,
  then the assumption $K/\QQ_2$ unramified implies
  that $A_K$ is a cubic ramified extension of $K$ 
  \cite[Proposition 3.4]{Brumer},
  in which case $E$ satisfies (\dag.i) so the result is already proved.
  When $E$ has ordinary reduction,
  the result is proved under the assumption $K/\QQ_2$ unramified
  in \cite[Proposition 3.6]{Brumer}.
\end{proof}

It is not true that Theorem \ref{thm:inclusion} holds in
  full generality. Here are some examples where the hypotheses
  $(\dagger)$ are not satisfied and the statement of Theorem \ref{thm:inclusion} does not hold.

\begin{exm}
  Let $$E:y^2=x(x+3p)(x+1-p)$$ be the elliptic curve over $\QQ_p$,
  with Kodaira type $\text{I}_2$ if $p\neq 2,3$, type $\text{I}_4$ if
  $p=3$ and type $\text{III}$ if $p=2$, and let
  $P=(p,2p)\in E(\QQ_p)$.  Here
  $A_{\QQ_p}\simeq\QQ_p\times\QQ_p\times\QQ_p$, but two roots are
  congruent (hence (\dag.ii) is not satisfied);
  $\delta_{\QQ_p}(P)=(p,4p,1)\notin
  (A^\times_{\Z_p}/(A^\times_{\Z_p})^2)_{\square}$.
\end{exm}
\begin{exm} Let $r \in \ZZ$ with $\kro{r}{p}=-1$
    if $p\neq 2$, 
    $r\equiv1\pmod{8}$ if $p=2$
    and
    let $$E:y^2=x(x^2-rp^2-r^2p^4)$$ be the elliptic curve over $\QQ_p$, with Kodaira type $\text{I}^*_0$
    if $p\neq 2$ and type $\text{I}^*_2$ if $p=2$. Let
  $P=(-rp^2,rp^2)\in E(\QQ_p)$.  Here
  $A_{\QQ_p}\simeq\QQ_p\times\QQ_p(\gamma)$ with
  $\gamma=p\sqrt{r+r^2p^2}$, the latter a quadratic
  unramified extension (whose ring of integers is not generated by
  $\gamma$, so (\dag.ii) is not satisfied); 
  $\delta_{\QQ_p}(P)=(-rp^2,-rp^2-\gamma)\notin(A^\times_{\Z_p}/(A^\times_{\Z_p})^2)_{\square}$.
\end{exm}
\begin{exm} Let
    $$E:y^2=x(x^2-p-p^2),$$ with Kodaira type $\text{III}$ and let $P=(-p,p)\in E(\QQ_p)$.  Here
    $A_{\QQ_p}\simeq\QQ_p\times\QQ_p(\gamma)$ via $T \to (0,\gamma)$ 
    with
    $\gamma=\sqrt{p+p^2}$ generating a quadratic ramified extension. Since the image satisfies that both coordinates are congruent modulo $p$, (\dag.ii) is not satisfied;  $\delta_{\QQ_p}(P)=(-p,-p-\gamma)\notin(A^\times_{\Z_p}/(A^\times_{\Z_p})^2)_{\square}$
\end{exm}
%\gt{Can we have an example $E$ of good reduction over $K/\QQ_2$
%ramified where the theorem fails ??? --- not for now but would be
%interesting. What about examples over $K=\QQ_2$ where $[E(K):E_0(K)]$
%is odd?}
% \begin{remark}
%   If the residual characteristic of $K$ is odd, and $E$ is an elliptic
%   curve with good reduction, it can always be written in the form $y^2=F(x)$,
%   where the polynomial $F(x)$ satisfies the first hypothesis of (\dag).
% \end{remark}

% The same result holds for elliptic curves with multiplicative reduction when $K$ has odd residual characteristic.

% \begin{prop}
%     Suppose that both $\mathop{\rm char}(k)$ and 
%     $[E(K):E_0(K)]$ are odd.
%     Then $\delta_K(E) \subset \AsqO$.
%   \label{prop:semistableoddchar}
% \end{prop}
% \begin{proof}
%   See \cite[Corollary 3.3]{Brumer}.
% \end{proof}

\begin{coro}
\label{coro:kummerimage}
  Suppose that $E$ satisfies \textup{(\dag)} and $\Char(k)>2$.
  Then $\delta_K(E) = \AsqO$.
    %the residual characteristic of $K$ is odd and let $E/K$ be an elliptic curve which either:
  %\begin{itemize}
  %\item has $[E(K):E_0(K)]$ odd, or
    %  %has Kodaira type $\text{I}_0$, $\text{I}_v$ with $v$ odd,
    %  %$\text{II}$, $\text{II}^*$, $\text{IV}$ or $\text{IV}^*$, or
  %\item is $E:y^2=F(x)$, with $F(x)$ satisfying (\dag).
  %\end{itemize}
\end{coro}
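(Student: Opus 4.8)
The plan is to combine the two size computations already available with the inclusion from Theorem~\ref{thm:inclusion}. By Lemma~\ref{lemma:E/2E} the image $\delta_K(E)$ has order $[\Om:2\Om]\cdot|E(K)[2]|$, while Lemma~\ref{lemma:Asq} gives $|\AsqO| = [\Om:2\Om]^2\cdot|E(K)[2]|$. Thus the inclusion $\delta_K(E)\subset\AsqO$ furnished by Theorem~\ref{thm:inclusion} is an equality of index $[\Om:2\Om]$ away from being onto; when $\Char(k)>2$ we have $2\in\Om^\times$, so $[\Om:2\Om]=1$ and the two orders coincide, forcing $\delta_K(E)=\AsqO$.

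So the only thing to check is that the hypotheses of those three inputs are in force. Since $E$ satisfies (\dag), Theorem~\ref{thm:inclusion} applies directly and gives $\delta_K(E)\subset\AsqO$. Lemmas~\ref{lemma:E/2E} and~\ref{lemma:Asq} hold for any elliptic curve $E:y^2=F(x)$ over the finite extension $K/\QQ_p$ with $F$ monic cubic square-free — no further hypothesis is needed — so both order formulas are available. Finally, the assumption $\Char(k)>2$ means $p$ is odd, hence $2$ is a unit in $\Om$ and $2\Om=\Om$, giving $[\Om:2\Om]=1$.

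Putting these together: $\delta_K(E)\subset\AsqO$ and
\[
  |\delta_K(E)| = [\Om:2\Om]\cdot|E(K)[2]| = |E(K)[2]| = [\Om:2\Om]^2\cdot|E(K)[2]| = |\AsqO|,
\]
so the inclusion is an equality of finite groups, proving $\delta_K(E)=\AsqO$.

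There is no real obstacle here; the corollary is essentially a bookkeeping consequence of the preceding results, and the residue characteristic hypothesis is exactly what collapses the ``defect'' $[\Om:2\Om]$ between the two group orders. The only point requiring a moment's care is making sure Theorem~\ref{thm:inclusion}'s hypothesis (\dag) is genuinely needed (it is, as the three examples following that theorem show) but is indeed assumed in the statement of the corollary.
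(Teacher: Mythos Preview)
Your argument is correct and matches the paper's proof exactly: invoke Theorem~\ref{thm:inclusion} for the inclusion, then use Lemmas~\ref{lemma:E/2E} and~\ref{lemma:Asq} together with $[\Om:2\Om]=1$ (since $\Char(k)>2$) to conclude that both sides have the same cardinality. The paper compresses this into two sentences, but the content is identical.
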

\begin{proof}
  By Theorem~\ref{thm:inclusion}
  we know that $\delta_K(E)\subset\AsqO$,
  and by Lemmas~\ref{lemma:E/2E} and \ref{lemma:Asq} both
  sets have the same cardinality.
\end{proof}

\subsubsection{The case $K$ is a finite extension of $\Q_2$}
%Let $\pi$ be a local uniformizer of $K$ and 
% \begin{prop}
%   Let $K/\Q_2$ be a finite unramified extension, and $E/K$ is an
%   elliptic curve with good reduction, then
%   $\delta_K(E) \subset \AsqO$.
% \label{prop:goodred2}
% \end{prop}
% \begin{proof} If $E$ has supersingular reduction, then the
%   extension $A_K$ (obtained by adding the $2$-torsion points) is a
%   cubic ramified extension of $K$, hence the proof follows from
%   Proposition~\ref{prop:inclusion}. In the ordinary case, the proof
%   follows from the proof of \cite[Lemma 3.5, Proposition
%   3.6]{Brumer}. 
% \end{proof}
Consider the set
\[
    U_4 = \{u \in A_\Om^\times \; : \; u \equiv \square \pmod{4 A_\Om} \text{ and
  }\norm(u) = \square\} \subset A_\Om^\times\,.
  \]
  Note that $(A_\Om^\times)^2\subset U_4$.

\begin{lemma}
\label{lemma:squarecondition}
    %We use notation of the last subsection and 
    Suppose $p= 2$. Then:

    \begin{enumerate}
        \item For
    $\alpha\in\Om$ we have
    \[
        1+4\alpha = \square
        \quad
        \Longleftrightarrow
        \quad
        \trace_{k/\FF_2} \alpha = 0
    \]
\item Let $L/K$ be a finite extension with odd ramification index. For
    all $\alpha\in \Om_L$ we have
    \[
        1 + 4\alpha = \square
        \quad
        \Longleftrightarrow
        \quad
        \norm_{L/K}(1+4\alpha) = \square
    \]
\item Let $L/K$ be a finite extension with even ramification index.
    For all $\alpha\in\Om_L$ we have $\norm_{L/K}(1+4\alpha) =
    \square$.
\item The group $\{u\in\Om^\times \;:\; u\equiv\square\pmod{4}\}$
    contains $(\Om^\times)^2$ with index 2.
  \item
\label{part:U4cardinality}
      The index of $(A_\Om^{\times})^2$ in $U_4$ is given by
    \[
    \# (U_4/(A_\Om^{\times})^2) = \begin{cases}
      1 & \text{ if } A_K \text{ is a field},\\
      2 & \text{ if } A_K \simeq K \times L, \text{ with $L$ a field},\\
      4 & \text{ if } A_K \simeq K \times K \times K.
      \end{cases}
    \]
  \end{enumerate}
  \end{lemma}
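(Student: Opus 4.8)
The plan is to prove (1) by Hensel's lemma and Artin--Schreier theory, deduce (2) and (3) from it via a norm expansion, and read off (4) from (1)--(3) applied to the local factors of $A_K$. For part (1): completing the square, $1+4\alpha=(1+2y)^2$ precisely when $y$ is a root of $g(y)=y^2+y-\alpha$, so $K(\sqrt{1+4\alpha})=K(y)$ and $1+4\alpha$ is a square in $K$ iff $g$ has a root in $\Om$. Since $g'(y)=2y+1\in\Om^\times$, Hensel's lemma reduces this to the solvability of the reduced equation $y^2+y=\bar\alpha$ in $k$, i.e.\ to $\bar\alpha\in\wp(k)$ with $\wp(x)=x^2+x$; as $\wp$ is $\FF_2$-linear with image $\ker(\trace_{k/\FF_2})$ (Artin--Schreier), this is the asserted equivalence. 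The same argument over a finite extension $L/K$ shows $1+4\alpha$ is a square in $L$ iff $\trace_{k_L/\FF_2}(\bar\alpha)=0$; this is used below.

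For (2) and (3): expanding $\norm_{L/K}(1+4\alpha)=\prod_\sigma(1+4\sigma\alpha)$ over the $[L:K]$ embeddings $\sigma$ of $L$ over $K$ and collecting elementary symmetric functions of the (integral) conjugates of $\alpha$ gives $\norm_{L/K}(1+4\alpha)=1+4\trace_{L/K}(\alpha)+16m$ with $m\in\Om$. By (1) over $K$ this is a square iff $\trace_{k/\FF_2}\bigl(\overline{\trace_{L/K}(\alpha)}\bigr)=0$, so everything reduces to the local identity
\[
  \overline{\trace_{L/K}(\alpha)}=\bigl(e(L/K)\bmod 2\bigr)\,\trace_{k_L/k}(\bar\alpha)\qquad\text{in }k,
\]
which I would prove by splitting $L/K$ into its maximal unramified subextension (over which $\trace$ reduces to $\trace_{k_L/k}$) and a totally ramified part, where an Eisenstein minimal polynomial and Newton's identities show that the trace of any integer reduces to the ramification degree times its residue; transitivity of the trace then gives the identity. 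If $e(L/K)$ is odd, combining it with $\trace_{k/\FF_2}\circ\trace_{k_L/k}=\trace_{k_L/\FF_2}$ shows $\norm_{L/K}(1+4\alpha)$ is a square iff $1+4\alpha$ is a square in $L$, which is (2); if $e(L/K)$ is even the right-hand side vanishes and $\norm_{L/K}(1+4\alpha)$ is always a square, which is (3).

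For (4): for a unit $u$, $u\equiv\square\pmod{4A_\Om}$ is equivalent to $u\in(A_\Om^\times)^2(1+4A_\Om)$, and since norms of squares are squares, $U_4=(A_\Om^\times)^2\cdot W$ with $W=\{w\in 1+4A_\Om:\norm(w)=\square\}$, hence $U_4/(A_\Om^\times)^2\cong W/\bigl((1+4A_\Om)\cap(A_\Om^\times)^2\bigr)$. Writing $A_\Om\simeq R_1\times\dots\times R_t$ as in the proof of Lemma~\ref{lemma:Asq} ($R_i$ the ring of integers of a field $L_i$ with $\sum_i[L_i:K]=3$, residue field $k_i$), part (1) makes $1+4\alpha_i\mapsto\trace_{k_i/\FF_2}(\bar\alpha_i)$ a surjection $1+4R_i\to\FF_2$ with kernel $(1+4R_i)\cap(R_i^\times)^2$, so $(1+4A_\Om)/\bigl((1+4A_\Om)\cap(A_\Om^\times)^2\bigr)\cong\FF_2^t$; under this isomorphism, the computation from (2)/(3) (note $\norm(w)\in 1+4\Om$) identifies $U_4/(A_\Om^\times)^2$ with the kernel of the linear form $(x_i)\mapsto\sum_i\bigl(e(L_i/K)\bmod 2\bigr)x_i$ on $\FF_2^t$. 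Since $\sum_i[L_i:K]=3$, either $A_K$ is a cubic field with $e(A_K/K)$ odd (it divides $3$), or some $L_i$ equals $K$ with ramification degree $1$; in all cases the linear form is nonzero, so its kernel has $2^{t-1}$ elements, which is $1,2,4$ for $t=1,2,3$, matching the three cases.

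The main obstacle is the trace-reduction identity with the parity factor $e(L/K)\bmod 2$: this is the only step not formally implied by (1), and the wildly ramified case (the Eisenstein/Newton's-identities argument) is where care is needed; the rest is bookkeeping around (1).
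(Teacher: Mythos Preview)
Your proof is correct and follows essentially the same approach as the paper: both reduce (1) to the Artin--Schreier description of $\wp(k)$ (the paper via the fact that units $\equiv 1\pmod{4\mathfrak{p}}$ are squares, you via Hensel directly), both prove (2)--(3) by expanding the norm as $1+4\trace_{L/K}(\alpha)$ modulo $4\mathfrak{p}$ and invoking the trace-reduction identity $\overline{\trace_{L/K}(\alpha)}=e_L\cdot\trace_{k_L/k}(\bar\alpha)$ (which the paper simply cites from Cassels, while you sketch the Eisenstein/Newton argument), and (4) is deduced from (1)--(3) in both. Your treatment of (4) via the explicit linear form $\sum_i(e_i\bmod 2)x_i$ on $\FF_2^t$ is more detailed than the paper's ``follows easily,'' but amounts to the same computation.
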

\begin{proof}
  Note first that if $1+4\alpha$ is a square, say $1+4\alpha = \beta^2$, then
  $\beta \equiv 1 \pmod 2$. Indeed, $v_\id{p}(\beta-1)<v_\id{p}(2)$
  would imply
  $v_\id{p}(\beta+1)=v_\id{p}(\beta-1)<v_\id{p}(2)$, but then
  $v_\id{p}(4\alpha)=v_\id{p}(\beta-1)+v_\id{p}(\beta+1)
  <v_\id{p}(4)$ contradicting $\alpha\in\Om$.

  % , since if $e$ denotes the ramification
  % index, the valuation of $(1+\pi^rv)^2-1 = r + \min\{r,e\}$ if $r \neq e$, hence for $(1+\pi^rv)^2 = 1+4\alpha$ it must be the case that $r \ge e$.

  Furthermore, recall that units in a local field which are congruent
  to $1$ modulo $4\id{p}$ are squares
  (see for example \cite[Theorem 63:1]{MR1754311}),
  hence $1+4\alpha$ is a square in $\cO_L$ if and only
  if there exists $v \in \Om_L$ such that
  $\alpha \equiv v + v^2 \pmod{\id{p}}$ (so $(1+4\alpha) = (1+2v)^2$
  up to squares).

  Consider the map $\phi: \Om/\id{p} \to \Om/\id{p}$ given by
  $\phi(v) = v^2 + v$; it is a group homomorphism with kernel $\{0,1\}$, hence
  its image has index $2$. Furthermore, the composite map
  $\trace_{k/\FF_2} \circ \, \phi : \Om/\id{p} \to \FF_2$ is the
  trivial map. Since the trace map is surjective, we conclude that the
  image of $\phi$ equals the kernel of the trace map, which proves the
  first statement.

  To prove statements $(2)$ and $(3)$, let $L/K$ be a finite extension
  of local fields with ramification index $e_L$, ring of integers
  $\Om_L$, maximal ideal $\id{p}_L$ and
  residue field $k_L$. Clearly
  $\norm_{L/K}(1+4\alpha) \equiv 1+4 \trace_{L/K}(\alpha) \pmod{4
    \id{p}}$, hence the results follow from a comparison between the
  trace map on $L/K$ and the one on their residue fields. Recall that
  if $x \in \Om_L$,
  $\trace_{L/K}(x) \equiv e_L \trace_{k_L/k}(\bar{x}) \pmod{\id{p}}$
  (see \cite{Cassels} Lemma 1, page 20), so the result follows.

  To prove (4) note that $u\equiv\square\pmod{4}$ if and only if
  $K(\sqrt{u})/K$ is a quadratic unramified extension, and there are
  exactly two such extensions (the split extension and the unramified
  field extension).

  The last statement follows easily from the previous ones.
  For example when $A_K$ is a field apply (4) to $A_K$
  to obtain
  $\{u\in A_\Om^\times \;:\;
  u\equiv\square\pmod{4}\}/(A_\Om^\times)^2$
  has exactly two elements and statement (2) implies that only the
  trivial one has square norm. The other two cases follow from a
  similar computation using (3) and (4).
  
\end{proof}
\begin{thm}
\label{thm:2kummerimage}
  Let $K/\QQ_2$ be a finite extension and let $E/K$ be an elliptic
  curve satisfying \textup{(\dag)}.
  % one of the following hypothesis:
  % \begin{itemize}
  % \item $E$ has good reduction and $K/\Q_2$ is unramified, or
  % \item $E:y^2=F(x)$, with $F(x)$ satisfying $(\dag)$.
  % \end{itemize}
  %suppose that $F(x)$
  %satisfies $(\dag)$.
  Then $\delta_K(E) \subset \AsqO$ with index
  $2^{[K:\QQ_2]}$.  Furthermore, %the set
  $U_4 \subset \delta_K(E)$.
  %\begin{itemize}
  %\item $E$ is semistable, i.e. has good or multiplicative reduction.

  %\end{itemize}
\end{thm}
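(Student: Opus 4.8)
The plan is to prove the two assertions ``$[\AsqO : \delta_K(E)] = 2^{[K:\QQ_2]}$'' and ``$U_4 \subseteq \delta_K(E)$'' in tandem, using a cardinality argument for the first and an explicit construction for the second. First I would record the orders of the three groups in play. By Lemma~\ref{lemma:E/2E} the order of $\delta_K(E)$ is $[\Om:2\Om]\cdot|E(K)[2]| = 2^{[K:\QQ_2]}\cdot 2^{t-1}$, where $t\in\{1,2,3\}$ is the number of field factors of $A_K$; by Lemma~\ref{lemma:Asq} the order of $\AsqO$ is $[\Om:2\Om]^2\cdot|E(K)[2]| = 2^{2[K:\QQ_2]}\cdot 2^{t-1}$. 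Hence the index $[\AsqO:\delta_K(E)]$ is \emph{automatically} $2^{[K:\QQ_2]}$, provided we know the inclusion $\delta_K(E)\subseteq\AsqO$ — but that is exactly Theorem~\ref{thm:2kummerimage}'s first clause phrased via Theorem~\ref{thm:inclusion}, which applies because $E$ satisfies (\dag). So the index statement reduces to invoking Theorem~\ref{thm:inclusion} together with the two cardinality lemmas; no new work is needed there.

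The substance of the theorem is therefore the inclusion $U_4\subseteq\delta_K(E)$. Here I would use the explicit description of the Kummer map: $\delta_K(P)=x(P)-T$ whenever $x(P)-T$ is a unit in $A_K$. The idea is that classes in $U_4$ are, by definition, congruent to a square modulo $4A_\Om$, so they are ``very close to $1$'' and one should be able to realize them (up to squares) by points of $E(K)$ reducing to the identity. Concretely, I would parametrize points near the origin using the formal group of $E$: a point $P$ in the kernel of reduction has $x(P)=1/z^2+\dots$ for a parameter $z$ with large valuation, so $x(P)-T = \tfrac1{z^2}(1-z^2 T+\dots)$, and modulo squares $\delta_K(P)$ is represented by the unit $1 - z^2 T + \dots \in 1+4A_\Om$ once $v_\id{p}(z)$ is large enough (e.g. $v_\id{p}(z^2)\ge v_\id{p}(4)$). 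Running $z$ over such a neighborhood, the resulting classes $\delta_K(P)$ fill out a subgroup of $\AsqO$; I would compute, using the filtration of $A_\Om^\times$ by $1+\id{P}^n$ together with Lemma~\ref{lemma:squarecondition} (which identifies, inside $1+4A_\Om$, exactly which classes have square norm and which are themselves squares), that this subgroup has index $2^{[K:\QQ_2]}$ in $\AsqO$ — i.e.\ it is all of $U_4$ up to the ambiguity measured by $\#(U_4/(A_\Om^\times)^2)$, which is $2^{t-1}$ by part~\eqref{part:U4cardinality} of that lemma. Combining the count $|U_4|/|(A_\Om^\times)^2|=2^{t-1}$ with the fact that $|\delta_K(E)|=2^{[K:\QQ_2]+t-1}$ and $|(A_\Om^\times)^2|$ is the right power of $2$, one sees that $U_4$ and $\delta_K(E)$ have compatible sizes for the inclusion to be tight precisely modulo the formal-group image.

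The main obstacle I anticipate is the case analysis forced by (\dag): when $A_K$ is a ramified cubic field, or a product $K\times L$ with $L$ ramified, or $K\times K\times K$ with congruent roots barely avoided, the formal-group computation of $v_\id{p}(\delta_K(P))$ and the identification of which near-$1$ units actually arise must be done separately, and in the wildly ramified cases ($\Char k = 2$) the formal group of $E$ interacts delicately with the uniformizer of $A_\Om$. In particular one must check that the points $P$ constructed really lie in $E(K)$ (not merely $E(A_K)$) and that $x(P)-T$ is a unit so the formula $\delta_K(P)=x(P)-T$ is valid; this is where the good-reduction hypothesis in (\dag.iv) and the unramifiedness of $K/\QQ_2$ get used, via Lemma~\ref{lemma:squarecondition}(1) (the trace criterion $\trace_{k/\FF_2}\alpha=0$) to pin down the image exactly. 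I would organize the argument so that Lemma~\ref{lemma:squarecondition} does all the heavy lifting on the ``target'' side $A_\Om^\times/(A_\Om^\times)^2$, leaving only the ``source'' side formal-group estimate to be verified uniformly.
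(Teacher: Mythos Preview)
Your treatment of the index statement is exactly the paper's: invoke Theorem~\ref{thm:inclusion} and compare the orders from Lemmas~\ref{lemma:E/2E} and~\ref{lemma:Asq}. No issue there.

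For the inclusion $U_4\subseteq\delta_K(E)$, your instinct to use the formal group is right and matches the paper, but the cardinality argument you sketch does not establish the inclusion. Knowing $[\AsqO:\delta_K(E)]=2^{[K:\QQ_2]}$ and $\#(U_4/(A_\Om^\times)^2)=2^{t-1}$ tells you only that the sizes are \emph{compatible} with $U_4\subseteq\delta_K(E)$; there is no ``ambiguity to quotient out'' that turns this into a proof. (Indeed $[\AsqO:U_4]=2^{2[K:\QQ_2]}$, not $2^{[K:\QQ_2]}$, so your identification of the formal-group image with ``all of $U_4$ up to $2^{t-1}$'' does not parse.) You must actually produce, for each nontrivial class of $U_4/(A_\Om^\times)^2$, a point $P\in E(K)$ with $\delta_K(P)$ in that class. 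The paper does this by taking $z=2u$ so that $z^2\delta_K(P(z))\equiv 1-4(a_2+T)u^2\pmod{4\id p}$, and the step you are missing is why one can choose $u$ to hit a prescribed non-square in a given factor: by Lemma~\ref{lemma:squarecondition}(1) this requires $a_2+c_i=-(c_j+c_k)$ to be a \emph{unit}, and that is exactly what (\dag.ii) buys --- the roots are distinct modulo $\id p$, and since $\Char(k)=2$ one has $c_j+c_k\equiv c_j-c_k\not\equiv 0$. Without this observation the formal-group image could, a priori, land entirely in squares.

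Two smaller misattributions: the trace criterion of Lemma~\ref{lemma:squarecondition}(1) drives case (\dag.ii), not (\dag.iv); in case (\dag.iv) the paper does not run the formal-group computation at all --- supersingular reduction forces $A_K$ to be a cubic field (reducing to (\dag.i), where $U_4/(A_\Om^\times)^2$ is already trivial), while the ordinary case is handled by citing \cite[Proposition~3.6]{Brumer}. Finally, since $K/\QQ_2$, one has $\Char(k)=2$ throughout; this is not a ``wildly ramified subcase'' but the standing situation, and it is precisely what makes the unit argument above work.
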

\begin{proof}
  The first claim follows from Theorem~\ref{thm:inclusion}
  and Lemmas~\ref{lemma:E/2E} and
  \ref{lemma:Asq}.  For the second statement,
  suppose first that $E$ satisfies (\dag.i), so that $A_K$ is a field.
  Then the result follows since $U_4/(A_\Om^{\times})^2$ is trivial
  by Lemma~\ref{lemma:squarecondition}.

  Suppose now that $E$ satisfies (\dag.ii). The case when $A_K$ is a
  field is already proved.
  Recall that if $E/K$ is given by
  \[
      E : y^2 = x^3 + a_2\,x^2 + a_4\,x + a_6
  \]
  with $a_2,a_4,a_6\in\Om$,
  using the formal group structure on $E$,
  for every $z\in\id{p}$ there is a point $P=P(z)$ with $x$-coordinate
  given by
  \[
      x(P) = z^{-2} - a_2 + O(z^2) \,.
  \]
  Then 
\begin{equation*}
  %\label{eq:formalgroup}
z^2\,\delta_K(P) = z^2\,(x(P)-T) = 1-(a_2+T)\,z^2 +O(z^4)\,.
\end{equation*}

In the case $A_K\simeq K\times K\times K$, by
Lemma~\ref{lemma:squarecondition}, $U_4/(A_\Om^{\times})^2$ has $4$
elements of the form
$\{(\square, \square, \square), (\nsquare, \square, \nsquare),
(\nsquare, \nsquare, \square), (\square, \nsquare, \nsquare)\}$. It is
enough to prove that there exists $z_1, z_2, z_3 \in\Om$ such that
$\delta_K(P(2z_i))$ has $i$-th coordinate not a square. Let
$u=1+4\alpha \in \Om$ be a unit congruent to $1$ modulo $4$ which is not
a square.
Let $\{c_1,c_2,c_3\}$ be the roots of $F(x)$. The hypothesis
(\dag.ii) implies that $\id{p}\nmid (c_2+c_3) = -(a_2+c_1)$,
so there exists
$z_1 \in \Om$ such that
$-(a_2 + c_1) z_1^2 \equiv \alpha \pmod{\id{p}}$ (since the map
$x \to x^2$ is a bijection in $\Om/\id{p}$). Then
$\delta_K(P(2z_1))=(u,\ast,\ast)$
has its first coordinate a non-square. A similar
argument applies to the other two coordinates.

In the case $A_K\simeq K\times L$ for a quadratic extension $L/K$, let
$\{c, \gamma, \gamma'\}$ be the roots of $F(x)$, with
$c \in \Om$ and $\gamma\in\Om_L$.
% Let $u=1+4\alpha\in\Om_L$ be a unit congruent to 1 modulo 4 which is
% not a square. The hypothesis (\dag.ii) implies that
% $v_\id{p}(a_2+\gamma) = v_\id{p}(c+\gamma') = 0$, so there exists
% $z_1\in\Om$ such that $(a_2+\gamma)z_1^2\equiv \alpha$
If $L/K$ is unramified, consider $z=2u$ so the
first coordinate
$z^2\,\delta_K(P(z))_1\equiv 1+4(\gamma+\gamma')u^2 \pmod{4\id{p}}$, and
the hypothesis (\dag.ii) implies that $\gamma+\gamma'\not\in\id{p}$,
hence there exists $u \in \Om$ such that $1+4(\gamma+\gamma')u^2$ is
not a square.  If $L/K$ is ramified, let $\id{P}$ be the maximal ideal
of $\Om_L$
and consider $z=2u$ so the second coordinate
$z^2\delta_K(P(z))_2\equiv 1+4(c+\gamma')u^2\pmod{4\id{p}}$.  The
hypothesis (\dag.ii) implies that $c+\gamma'\not\in\id{P}$ so
there exists $u \in \Om$ (we can take $u \in \Om$ because it has the
same residue field as $\Om_L$) such that $1+4(c+\gamma')u^2$ is not a
square.

In either case, $z^2\delta_K(P(z))\in U_4$ is not a square in $A_K$, but by
Lemma~\ref{lemma:squarecondition}, we know that $U_4/(A_\Om^\times)^2$
has two elements so the statement follows.

  The case (\dag.iii) does not occur since $\Char(k)=2$.
  Finally, suppose $E$ satisfies (\dag.iv). If $E$ has supersingular
  reduction, then $A_K$ is a cubic ramified extension of
  $K$ \cite[Proposition 3.4]{Brumer}, in which case the result is
  already proved. If $E$ has ordinary reduction, the result follows
  from \cite[Proposition 3.6]{Brumer}.
\end{proof}

\section{$2$-Selmer groups and Class groups}\label{s:selmer vs class group}

Suppose now that $K$ is a number field and $E$ is an elliptic curve
over $K$. For $v$ a place of $K$, let $K_v$ denotes its completion.  From
now on we assume the following hypotheses:
\begin{hypos}
\label{hypo:KF}
The elliptic curve $E$ and the field $K$ satisfy:
\begin{enumerate}
    \item The narrow class number of $K$ is odd.
    \item $E(K)[2] = \{0\}$.
    \item For all finite places $v$ of $K$, $E/K_v$ satisfies
        \textup{(\dag)} (see Definition~\ref{def:hyp}).
\end{enumerate}
\end{hypos}
Note that the second hypothesis implies that $A_K$ is a cubic field
extension of $K$ and we denote by $A_{\cO}$ its ring of integers.
%\begin{remark}
%    The third hypothesis is equivalent to the following: every prime
%    ideal $\id p$ of $K$ such that $\id p\mid \disc(F(x))/\disc_K(A_K)$
%    is either inert or totally ramified in $A_K$.
%\end{remark}
For each place $v$ of $K$, let $G_v=G_{K_v}$ and
fix an immersion $G_v \hookrightarrow G_K$.  To ease the notation let
$\delta_v=\delta_{K_v}$ and let $\res_v$  denote the restriction map $H^1(G_K,E(\bar{K})[2]) \rightarrow H^1(G_v,E(\bar{K}_v)[2])$.
\begin{defi}
  \label{defi:2selmer}
  The $2$-Selmer group of $E$ consists of the cohomology classes in
  $H^1(G_K,E(\bar{K})[2])$ whose restriction to $G_v$ lies in the
  image of $\delta_v$ for all places $v$ of $K$, i.e.
  \[
    \Sel_2(E) =\{c \in H^1(G_K,E(\bar{K})[2]) \; : \; \res_v(c) \in \delta_v(E) \ \text{for each place} \ v \ \text{of} \ K\}.
  \]
\end{defi}
%Let $\Z_K$ denote the ring of integers of $K$. Let
%$\theta_1, \theta_2, \theta_3$ denote the roots of $F(x)$, and let
%$L = K[x]/F(x)$. Then , $\Disc(F) = f^2 \Disc(L/K)$, where
%$f = [\Z_L:\Z_K[x]/F(x)]$.
%
If $v$ is an archimedean place of $K$ then either:
\begin{enumerate}[(i)]
\item $K_v \simeq \RR$ and $A_{K_v} \simeq \RR \times \CC$,
  
\item $K_v \simeq \RR$ and $A_{K_v} \simeq \RR \times \RR \times \RR$,
\item $K_v \simeq \CC$ and $A_{K_v} \simeq \CC \times \CC \times \CC$.
\end{enumerate}
We say that an archimedean place of $K$ has type (i), (ii) or (iii)
depending on the above cases.  Let us introduce the following
notations: if $\alpha \in A_K$, the notation $\norm(\alpha) \gg 0$
means that for each real archimedean place $v$ of $K$,
$v(\norm(\alpha))>0$.
If $v$ is a real place of $K$ of type (i),
%If $v$ is a real archimedean place of $K$,
let $\tilde{v}$ denote the unique real place in $A_K$ extending $v$.
If $v$ is a real place of $K$ of type (ii), let
$\tilde{v}$, $\tilde{v}_2$, $\tilde{v}_3$ denote the places of $A_K$
extending $v$, so that $\tilde{v}$ is the distinguished one (see Remark~\ref{rem:distinguishedplace}).

Define the following subgroups of $\A$:
\begin{align*}
    \Ml(E)&= \bigl\{\, [\alpha] \in \A \; : \;
    %  \norm(\alpha) \gg 0,\,
  \text{$A_K(\sqrt{\alpha})/A_K$ is unramified at all finite}
  \notag\\&\hskip 2.2em
  \text{places of $A_K$, it is unramified at $\tilde{v}$
        for all real places $v$ of $K$,}
  \\\notag&\hskip 2.2em
    \text{and for each $v$ of type (ii) it ramifies at $\tilde{v}_2$
        $\Leftrightarrow$ it ramifies at $\tilde{v}_3$}
        \,\bigr\},
\intertext{and}
        \Mu(E)&= \bigl\{\, [\alpha] \in \A \; : \;
           \norm(\alpha) = \square,
                \text{ $w(\alpha)$ is even for all finite}
  \notag\\&\hskip 2.2em
        \text{places $w$ of $A_K$, $\tilde{v}(\alpha)>0$ for all real places $v$ of $K$} \,\bigr\}.
\end{align*}
\begin{remark}
  In the definition of $\Ml(E)$ and $\Mu(E)$, the dependence of $E$
 comes only from the distinguished place in $A_K$ at real archimedean
  places of type (ii). In particular, if no such place exists,
  these subgroups depend only on $A_K$ (and not on the particular
  curve whose cubic field extension of $K$ is $A_K$). 
\end{remark}

\begin{exm}
  \label{exm:sortingroots}
    Here is a concrete example where $\Ml(E)$ depends on $E$ and not
    only on $A_K$:
    consider the
  curve $$E:y^2=F(x)= x^3-7x+3$$ over $K=\QQ$,
  so that $A_\QQ=\QQ[T]/(T^3-7\,T+3)$.
  The real place $v$ of $\QQ$ is of type (ii) since $F$ has three real
  roots $\theta_1<\theta_2<\theta_3$.
  The field $A_\QQ$ has class number $1$, but narrow class number
  $2$. Indeed the narrow Hilbert class field (maximal abelian
  extension unramified at all finite places)
  of $A_\QQ$ is $A_\QQ(\sqrt{T^2-8})$;
  it is unramified at $\tilde{v}$ and
  it is ramified at $\tilde{v}_2$ and $\tilde{v}_3$
  (since $\theta_1^2-8>0$, $\theta_2^2-8<0$ and $\theta_3^2-8<0$).
  Thus $[T^2-8]\in\Ml(E)$ and $\Ml(E)$ has order $2$.

  On the other hand, consider the quadratic twist
  of $E$ by $\QQ(\sqrt{-1})$,
  $$E_{-1}:y^2=-F(-x)=x^3-7x-3\,.$$
  This curve has the same cubic field  $A_\QQ$,
  but twisting changes the distinguished place from $\tilde{v}$ to
  $\tilde{v}_3$, so that $[T^2-8]\not\in\Ml(E_{-1})$ and it follows
  that $\Ml(E_{-1})$ is trivial.
  In Example~\ref{exm:differentclassgroup} we use this to compute the
  2-Selmer rank for all the quadratic twists of this curve.
  %the ray class group is trivial if we allow
  %ramification at the real places $\{\sigma_1,\sigma_2\}$
  %or $\{\sigma_1,\sigma_3\}$ (and
  %unramified at the third real place and at all finite places).
  %If we allow ramification at the infinity places $\{\sigma_2,\sigma_3\}$,
  %the ray class group has an element of order two which
  %corresponds to the quadratic extension $A_K(\sqrt{T^2-8})$
  %(indeed it is easy to check that $T^2-8$ has norm 1
  %and $\sigma_1(T^2-8)>0$, $\sigma_2(T^2-8)<0$, $\sigma_3(T^2-8)<0$.
  %%obtained by adding a root of the polynomial $x^2-T^2+8$. 
  %Then $\Ml(E)$ has order $2$, but if we take the quadratic twist of $E$ by $\Q(\sqrt{-1})$, the subgroup $\Ml(E \otimes \chi_{-1})$ is trivial
  %since twisting changes the distinguished place from $\sigma_1$ to
  %$\sigma_3$.
\end{exm}

\begin{lemma}
\label{lemma:M1}
  The set $\Ml(E)$ equals the set of elements $[\alpha] \in \A$
  satisfying the following local conditions:
  \begin{itemize}
  \item For all finite places $w$ of $A_K$,
      $w(\alpha)$ is even.
      % $\alpha$ is a unit up to squares.
      % the extension $A_{K_v}(\sqrt{\alpha})/A_{K_v}$ is unramified,
  \item For all real places $v$ of $K$, $\tilde{v}(\alpha)>0$.
  \item $\norm(\alpha)\gg 0$.
  \item $\alpha \equiv \square \pmod {4A_{\cO}}$.
  \end{itemize}
%  In particular, $\Ml(E)\subset\Mu(E)$.
\end{lemma}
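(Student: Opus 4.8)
The plan is to show that the global ramification condition defining $\Ml(E)$, namely that $A_K(\sqrt{\alpha})/A_K$ is unramified at all finite places and at the distinguished real place $\tilde v$ of each type (ii) place, and has matched ramification at $\tilde v_2,\tilde v_3$, is equivalent to the list of local congruence/positivity conditions. First I would recall that for a class $[\alpha]\in\A$ and a place $w$ of $A_K$, the extension $A_K(\sqrt{\alpha})/A_K$ is unramified at $w$ precisely when the local image of $\alpha$ lies in the unramified subgroup of $A_{K,w}^\times/(A_{K,w}^\times)^2$. At a finite place $w$ of odd residue characteristic this unramified subgroup is exactly the classes of units, i.e.\ the condition $w(\alpha)$ even; at a finite place $w$ above $2$ the unramified quadratic extension is obtained by adjoining the square root of a unit that is a nonsquare mod the residue field, and combined with evenness of valuation the precise statement is that $\alpha$ must be (up to squares) a unit congruent to a square modulo $4A_{\cO_w}$ — this is where the hypothesis $\alpha\equiv\square\pmod{4A_{\cO}}$ enters, using the standard fact (already cited as \cite[Theorem 63:1]{MR1754311}, and exploited in Lemma~\ref{lemma:squarecondition}) that a local unit congruent to $1$ modulo $4\id p$ is a square. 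So the two finite conditions ``$w(\alpha)$ even for all $w$'' together with ``$\alpha\equiv\square\pmod{4A_{\cO}}$'' exactly encode being unramified at all finite places of $A_K$.

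Next I would treat the archimedean places. Over $\RR$, the extension $\RR(\sqrt{t})/\RR$ is ramified (i.e.\ is $\CC/\RR$) iff $t<0$; over $\CC$ there is nothing to check. So at a real place $\tilde v$ of $A_K$ lying over a real place $v$ of $K$, being unramified at $\tilde v$ means $\tilde v(\alpha)>0$. For a type (i) place of $K$ there is a single real place $\tilde v$ of $A_K$ (the other place being complex and imposing no condition), giving the condition $\tilde v(\alpha)>0$. For a type (ii) place there are three real places $\tilde v,\tilde v_2,\tilde v_3$; the defining condition of $\Ml(E)$ asks unramified at $\tilde v$ and equal behaviour at $\tilde v_2,\tilde v_3$. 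I claim this pair of conditions is equivalent to ``$\tilde v(\alpha)>0$ and $\norm(\alpha)\gg 0$ at $v$'': indeed $\norm(\alpha)$ at $v$ is, up to squares and positive factors, the product $\tilde v(\alpha)\,\tilde v_2(\alpha)\,\tilde v_3(\alpha)$ of the three real embeddings of $\alpha$, so once $\tilde v(\alpha)>0$ the sign of $\norm(\alpha)$ at $v$ is the product of the signs at $\tilde v_2$ and $\tilde v_3$; that product is positive exactly when those two signs agree, i.e.\ exactly when $A_K(\sqrt\alpha)$ ramifies at $\tilde v_2$ iff it ramifies at $\tilde v_3$. For a complex place $v$ of $K$ (type (iii)) all places of $A_K$ above it are complex, imposing no archimedean condition and contributing nothing to $\norm(\alpha)\gg 0$, so consistency holds. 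Collecting: the archimedean part of the $\Ml(E)$ definition is equivalent to ``$\tilde v(\alpha)>0$ for all real $v$'' together with ``$\norm(\alpha)\gg0$''.

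Finally I would assemble the equivalence: an element $[\alpha]\in\A$ lies in $\Ml(E)$ iff it satisfies the four bulleted local conditions, by matching the finite-place analysis to the first and fourth bullets and the archimedean analysis to the second and third bullets. Two small bookkeeping points need care. First, the passage between ``$A_K(\sqrt\alpha)/A_K$ unramified at $w$'' and the stated valuation/congruence condition must be phrased at the level of the local algebra $A_{K,w}$, which is a product of local fields; for finite $w$ each factor is handled as above, and at $w\mid 2$ I must note that ``$w(\alpha)$ even'' plus ``unit times a square mod $4$'' is exactly unramifiedness in each factor — this is the step most likely to need a careful word, since it relies on the structure of the maximal unramified $2$-extension of a $2$-adic field. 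Second, I should remark that the class $[\alpha]$ being in $\A$ already builds in $\norm(\alpha)\in K^{\times2}$, which is why only the sign refinement $\norm(\alpha)\gg0$ appears as a new condition rather than the full statement that $\norm(\alpha)$ is a totally positive square. The main obstacle is thus purely the local-at-$2$ identification; everything else is an unwinding of definitions of ramification in quadratic extensions of local and real fields.
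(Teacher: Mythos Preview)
Your argument is correct and matches the paper's own proof, which is terse: it simply observes that the only nontrivial equivalence is at places above $2$ and cites \cite[Lemma~3.4]{MR3946721} for that step, leaving the odd-finite and archimedean translations implicit exactly as you have unpacked them. One correction to your closing bookkeeping remark: membership in $\A=A_K^\times/(A_K^\times)^2$ does \emph{not} build in $\norm(\alpha)\in K^{\times 2}$ --- that is the defining condition of $\Asq$, not of $\A$ --- and the inclusion $\Ml(E)\subset\Asq$ is proved separately in Lemma~\ref{lemma:fixed} using the hypothesis that $K$ has odd narrow class number; it plays no role in the present lemma, so your main argument is unaffected.
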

\begin{proof}
  The only non-trivial part is the condition at places dividing $2$,
  which is a well known result and a detailed proof is given in \cite[Lemma 3.4]{MR3946721}.
\end{proof}
%It is not hard to verify that we have: 
%\begin{align*}M_1= \{ [\alpha] \in (L^{\times}/ (L^{\times})^2)_{\N= \square}: L_v(\sqrt{\alpha})/L_v \ \text{is unramified if} \ v \nmid 2, \ \alpha \equiv 1 (\mathrm{mod} \ 4) \}\\
%M_2= \{ \alpha \in (L^{\times}/ (L^{\times})^2)_{\N= \square}: \mathrm{ord}_v(\alpha) \ \text{is even if} \ v \ \text{is finite of } \ K\}
%\end{align*}
\begin{lemma}\label{lemma:fixed}
We have $\Ml(E)\subset\Asq$.
\end{lemma}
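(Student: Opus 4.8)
The plan is to show that for $[\alpha]\in\Ml(E)$ the element $\beta:=\norm_{A_K/K}(\alpha)\in K^\times$ is a square; equivalently, that the (at most quadratic) extension $K(\sqrt{\beta})/K$ is trivial. I would prove that $K(\sqrt{\beta})/K$ is unramified at every finite place of $K$ and then invoke that the narrow class number of $K$ is odd (Hypotheses~\ref{hypo:KF}). Only the finite-place conditions of Lemma~\ref{lemma:M1} will be needed: after choosing a convenient representative of the class we may assume $w(\alpha)$ is even for every finite place $w$ of $A_K$, and at the places above $2$ that $\alpha\equiv 1+4\theta$ modulo squares for some locally integral $\theta$.

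First I would treat a finite place $v\nmid 2$ of $K$. Since $w(\alpha)$ is even for every place $w$ of $A_K$ above $v$, the valuation $v\bigl(\norm(\alpha)\bigr)$ is even as well, so $K_v(\sqrt{\beta})/K_v$ is unramified, the residue characteristic being odd. The place $v\mid 2$ is the crux. Writing $A_{K_v}=\prod_i L_i$ as a product of fields and $\alpha\equiv(1+4\theta_i)_i$ modulo squares with $\theta_i\in\cO_{L_i}$, I would expand each $\norm_{L_i/K_v}(1+4\theta_i)=\prod_\sigma(1+4\sigma\theta_i)$; since the elementary symmetric functions of the $\sigma\theta_i$ lie in $\cO_v$, this product lies in $1+4\cO_v$, and hence so does $\beta$ modulo $(K_v^\times)^2$. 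It then remains to observe that any $\xi=1+4c\in 1+4\cO_v$ generates an unramified extension of $K_v$: setting $z=(\sqrt{\xi}-1)/2$ one gets $z^2+z-c=0$, a monic integral polynomial whose reduction modulo the maximal ideal of $\cO_v$ is separable (its derivative is $\equiv 1$ in characteristic $2$), so $K_v(\sqrt{\xi})=K_v(z)$ is unramified over $K_v$ (compare Lemma~\ref{lemma:squarecondition}).

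Combining the two cases, $K(\sqrt{\beta})/K$ is an abelian extension unramified at all finite places of $K$, hence contained in the narrow Hilbert class field of $K$, whose Galois group is the narrow class group. Since the narrow class number of $K$ is odd, this group has no subgroup of index two, which forces $K(\sqrt{\beta})=K$; thus $\beta=\norm(\alpha)\in(K^\times)^2$, i.e.\ $[\alpha]\in\Asq$. The only genuine obstacle is the analysis at the primes above $2$: one cannot prove that $\beta$ is a square in $K_v$ there — in general it is not, already over $\QQ_2$ — so it is essential to argue via unramifiedness of $K(\sqrt{\beta})/K$ and then pass to a global conclusion, rather than trying to verify the square condition place by place.
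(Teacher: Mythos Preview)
Your proof is correct and follows essentially the same approach as the paper: show that $K(\sqrt{\norm(\alpha)})/K$ is unramified and then invoke the odd-class-number hypothesis. The only cosmetic difference is that the paper also uses the archimedean condition $\norm(\alpha)\gg0$ from Lemma~\ref{lemma:M1} to get unramifiedness at the infinite places as well, and then appeals to the ordinary (rather than narrow) class group being odd.
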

\begin{proof}
    If $[\alpha] \in \Ml(E)$, by Lemma~\ref{lemma:M1} its norm
    $\norm(\alpha)$ has even
 valuation at all finite places of $K$, it is totally positive, and a
 square modulo $4\Om$. Hence $K(\sqrt{\norm(\alpha)})/K$ is unramified
 at all places of $K$, and
 since the class number of $K$ is odd,
 this implies that $\norm(\alpha)$ is a square.
 % hence it is the square of an
 %  ideal. Since the class number of $K$ is odd,
 %  $\norm(\alpha)$ is the square of an element up to a unit, and the
 %  condition that $\norm(\alpha) \gg 0$ implies that $\norm(\alpha)$ is
 %  a square itself (recall that when the
 %  narrow class number is odd, all totally positive units are squares).
  \end{proof}

\begin{prop}
    \label{prop:selmer inclusion}
    The following inclusions hold
  \[
    \Ml(E) \subset \mathrm{Sel}_2(E) \subset \Mu(E).
    \]
\end{prop}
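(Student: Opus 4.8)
The plan is to establish the two inclusions separately by translating the global Selmer condition into local conditions and comparing them with the defining conditions of $\Ml(E)$ and $\Mu(E)$. Throughout I would use the identification $H^1(G_K,E(\bar K)[2])\simeq\Asq$ and, for each place $v$, the compatible identification $H^1(G_v,E(\bar K_v)[2])\simeq(A_{K_v}^\times/(A_{K_v}^\times)^2)_\square$, under which $\res_v$ becomes the natural map $\Asq\to(A_{K_v}^\times/(A_{K_v}^\times)^2)_\square$ sending $[\alpha]$ to its class in the completion. An element $[\alpha]\in\Asq$ lies in $\Sel_2(E)$ precisely when $\res_v([\alpha])\in\delta_v(E)$ for every place $v$.

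For the inclusion $\Ml(E)\subset\Sel_2(E)$: by Lemma~\ref{lemma:fixed} we already know $\Ml(E)\subset\Asq$, so it remains to check the local conditions place by place. At a finite place $v$ of residue characteristic $>2$, Lemma~\ref{lemma:M1} tells us $w(\alpha)$ is even for all $w\mid v$, so $\res_v([\alpha])\in\AsqOv$, which by Corollary~\ref{coro:kummerimage} equals $\delta_v(E)$. At a place $v\mid 2$, Lemma~\ref{lemma:M1} gives both that $w(\alpha)$ is even for all $w\mid v$ and that $\alpha\equiv\square\pmod{4A_{\cO}}$; together with $\norm(\alpha)=\square$ (true globally since $[\alpha]\in\Asq$) this says exactly that $\res_v([\alpha])$ lands in $U_4$ (after removing even valuations it becomes a unit congruent to a square mod $4$ with square norm), and Theorem~\ref{thm:2kummerimage} asserts $U_4\subset\delta_v(E)$. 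At an archimedean place of type (i) or (iii), $\delta_v(E)$ is the whole (trivial) group by Lemma~\ref{lemma:imagekummerinfty}, so there is nothing to check; at a type (ii) place, $\delta_v(E)=\<(1,-1,-1)>$ in the ordering where $\tilde v$ is the distinguished place, and the conditions ``$\tilde v(\alpha)>0$'' and ``ramifies at $\tilde v_2\Leftrightarrow$ ramifies at $\tilde v_3$'' say precisely that the sign vector of $\alpha$ at the three real places above $v$ is either $(+,+,+)$ or $(+,-,-)$, i.e.\ lies in $\delta_v(E)$.

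For the inclusion $\Sel_2(E)\subset\Mu(E)$: here I would take $[\alpha]\in\Sel_2(E)\subset\Asq$, so $\norm(\alpha)=\square$ is automatic, and extract the remaining conditions from the local constraints. At a finite place $v$, Theorem~\ref{thm:inclusion} gives $\delta_v(E)\subset\AsqOv$ (using hypothesis \ref{hypo:KF}(3)), and $\res_v([\alpha])\in\AsqOv$ forces $w(\alpha)$ to be even for every $w\mid v$; ranging over all finite $v$ gives the ``$w(\alpha)$ even for all finite $w$'' condition. At a real place $v$ of $K$: if it is of type (i) there is a single real place $\tilde v$ above it and $\res_v([\alpha])$ being trivial forces $\tilde v(\alpha)>0$; if it is of type (ii), $\res_v([\alpha])\in\<(1,-1,-1)>$ again forces the component at the distinguished place $\tilde v$ to be positive, i.e.\ $\tilde v(\alpha)>0$. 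These are exactly the defining conditions of $\Mu(E)$, so $[\alpha]\in\Mu(E)$.

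The only subtlety I anticipate is bookkeeping at the places above $2$ in the first inclusion: one must be careful that stripping off the even valuations of $\alpha$ at places $w\mid v$ genuinely produces a representative lying in $A_{\cO_v}^\times$ that is congruent to a square modulo $4A_{\cO_v}$ and has square norm, so that it falls inside $U_4$ as defined before Lemma~\ref{lemma:squarecondition}; this is really just unwinding Lemma~\ref{lemma:M1} together with the fact that the global norm being a square localizes to a square norm at $v$. Everything else is a direct comparison of the explicitly computed local images $\delta_v(E)$ (from Corollary~\ref{coro:kummerimage}, Theorem~\ref{thm:2kummerimage} and Lemma~\ref{lemma:imagekummerinfty}) with the local conditions packaged into $\Ml(E)$ and $\Mu(E)$ via Lemma~\ref{lemma:M1}.
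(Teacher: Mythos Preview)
Your proposal is correct and follows essentially the same route as the paper's own proof: both inclusions are checked place by place, invoking Lemma~\ref{lemma:fixed} and Lemma~\ref{lemma:M1} together with Corollary~\ref{coro:kummerimage}, Theorem~\ref{thm:2kummerimage}, Lemma~\ref{lemma:imagekummerinfty}, and Theorem~\ref{thm:inclusion} exactly as you do. Your write-up is in fact slightly more detailed than the paper's (particularly at the archimedean places of type~(ii) and in the bookkeeping at primes above~$2$), but there is no substantive difference in strategy.
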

\begin{proof} 
  Since $\Ml(E)\subset\Asq$, 
  to prove that $\Ml(E) \subset \mathrm{Sel}_2(E)$ it is enough to check
  that if $[\alpha] \in \Ml(E)$ then for each place $v$ of $K$,
  $[\alpha] \in \im(\delta_v)$. The condition at the infinity places is
  clear by Lemma~\ref{lemma:imagekummerinfty}. If $v$ is a finite
  place of $K$ not dividing $2$, as the quadratic
  extension is unramified then $\alpha$ is a unit in $A_{K_v}$
  (up to squares),
  hence by Corollary~\ref{coro:kummerimage} it lies in the image
  of $\delta_v$.  For a place $v$ dividing $2$, by Lemma~\ref{lemma:M1},
  $\alpha \equiv \square \pmod {4A_\Om}$, and by
  Theorem~\ref{thm:2kummerimage} such set is contained in
  the image of $\delta_v$.

The claim $\mathrm{Sel}_2(E) \subset \Mu(E)$ follows from
Lemma~\ref{lemma:imagekummerinfty} and
Theorem~\ref{thm:inclusion}.
\end{proof}

Let $\Frac(A_K)$ denote the group of fractional ideals of $A_K$, let
$P$ be the subgroup of principal ideals,
and consider the subgroup
\begin{align*}
  %\label{eq:P1}
  P_*(E) &= \{ (\alpha) \in P  :  \text{
          and $\tilde{v}_2(\alpha)\,\tilde{v}_3(\alpha)>0$
          for all $v$ of type (ii) }
        \}
\end{align*}
  %\label{eq:P*}
  %\text{ for all real places }\tilde{v}(\alpha)>0 \text{ and } v(\norm(\alpha))>0

%Note that the definition of $P_1$ only imposes a condition at real
%places of $K$ of type (ii), since it is automatically satisfied for
%real places of type (i). 
Let $P_+=\{(\alpha) \in P: \alpha\gg 0\}$. Clearly
$P_+ \subset P_*(E) \subset P$ and $P/P_+$ is an elementary
$2$-group.

%{\color{red} D: a priori la condición que define $P_2$ depende del representante $\alpha$. Quizás deberíamos agregar "There exits a representative $\alpha$....".  Voy a almorzar y lo pienso mejor...}

\begin{lemma} \label{l: other definition of P1}
    We have: 
$$P_*(E)= \{ (\alpha)  \in P:  \text{
            $\tilde{v}(\alpha)>0$ for all real places $v$ of $K$,
            and $\norm(\alpha)\gg0$ }  \}$$
\end{lemma}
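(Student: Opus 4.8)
The plan is to prove the two set inclusions separately, after first recording the elementary dictionary between the archimedean norm and signs. For a real place $v$ of $K$ and $\alpha\in A_K^\times$, the sign of $v(\norm(\alpha))$ equals the product of the signs of $w(\alpha)$ over the \emph{real} places $w$ of $A_K$ above $v$, since any complex place above $v$ contributes a factor $|w(\alpha)|^2>0$ to the norm. Concretely: if $v$ has type (i) then $v(\norm(\alpha))>0\iff\tilde v(\alpha)>0$, while if $v$ has type (ii) then $v(\norm(\alpha))>0\iff\tilde v(\alpha)\,\tilde v_2(\alpha)\,\tilde v_3(\alpha)>0$; complex places of $K$ are irrelevant to both sides.

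The inclusion ``$\supseteq$'' is then formal: if $(\alpha)\in P$ satisfies $\tilde v(\alpha)>0$ for all real $v$ and $\norm(\alpha)\gg 0$, then at each place $v$ of type (ii) the condition $\norm(\alpha)\gg 0$ gives $\tilde v(\alpha)\,\tilde v_2(\alpha)\,\tilde v_3(\alpha)>0$, and dividing by $\tilde v(\alpha)>0$ yields $\tilde v_2(\alpha)\,\tilde v_3(\alpha)>0$, which is exactly the defining condition of $P_*(E)$.

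For the reverse inclusion I would start from $(\alpha)\in P_*(E)$ and replace the generator $\alpha$ by $\epsilon\alpha$ for a suitable unit $\epsilon$ of the ring of integers $\cO_K$ of $K$ --- using a unit of $\cO_K$ rather than of $A_\cO$ is the crucial point. Indeed, since $\cO_K\subseteq A_\cO$ and any real embedding of $A_K$ lying above $v$ restricts to $v$ on $K$, such an $\epsilon$ has the \emph{same} sign $\sign(v(\epsilon))$ at every real place of $A_K$ above $v$; hence multiplying by $\epsilon$ does not change the sign of any product $\tilde v_2(\alpha)\,\tilde v_3(\alpha)$ (so $(\epsilon\alpha)$ is still in $P_*(E)$), while it multiplies $\tilde v(\alpha)$ by $\sign(v(\epsilon))$. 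The input that makes this work is Hypotheses~\ref{hypo:KF}(1): the narrow class number of $K$ being odd forces the sign map $\cO_K^\times\to\prod_{v\ \mathrm{real}}\{\pm1\}$ to be surjective (it fits in an exact sequence whose cokernel is the kernel of $\Cl_+(K)\to\Cl(K)$, and $|\Cl_+(K)|$ odd forces this cokernel trivial as well as $|\Cl(K)|$ odd). So I can choose $\epsilon$ with $\sign(v(\epsilon))=\sign(\tilde v(\alpha))$ for every real place $v$ of $K$; then $\tilde v(\epsilon\alpha)>0$ for all such $v$, and combining this with $\tilde v_2(\epsilon\alpha)\,\tilde v_3(\epsilon\alpha)>0$ and the norm--sign dictionary gives $\norm(\epsilon\alpha)\gg 0$. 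Hence $(\alpha)=(\epsilon\alpha)$ lies in the right-hand set.

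The main obstacle is precisely this last adjustment of the generator: one must realize a prescribed sign pattern at the distinguished real places of $A_K$ by a single global unit, and the only reason this is possible is the oddness of the narrow class number in Hypotheses~\ref{hypo:KF}. I would be careful to use a unit of $\cO_K$ (so that it acts uniformly on each fibre over a real place of $K$ and therefore cannot disturb the type (ii) condition), and to justify the equivalence between ``narrow class number of $K$ odd'' and surjectivity of the sign map; everything else is bookkeeping with the places of $A_K$ above the real places of $K$ and the distinguished place from Remark~\ref{rem:distinguishedplace}.
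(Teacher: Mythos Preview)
Your proof is correct and follows essentially the same approach as the paper: both argue the inclusion $\supseteq$ directly from the sign decomposition of the norm, and for $\subseteq$ multiply the generator by a unit of $\cO_K$ with prescribed signs at the real places of $K$, using that the narrow class number of $K$ is odd. You simply make explicit the two points the paper leaves implicit---the norm--sign dictionary and the reason the oddness hypothesis yields surjectivity of the sign map on $\cO_K^\times$.
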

\begin{proof}
    The inclusion $\supset$ is trivial.
    For the other inclusion,
    let $(\alpha)\in P_*(E)$. Since the narrow class number of $K$ is
    odd there are units in $K$ with arbitrary signs for the real
    places,
    in particular there is a unit $\mu\in\Om^\times$ such that
    $\tilde{v}(\mu\alpha)>0$ for all real places of $K$. Moreover,
    this implies that $\norm(\mu\alpha)\gg0$.
    Thus $(\alpha)=(\mu\alpha)$ is in the set of the right hand side.
  % Recall that $K$ has odd narrow class number, hence if $v$ is a real
  % place of $K$ of type (ii), there exists a unit $\mu$ which is
  % negative at $v$ and positive at all other real places.
  % Let $(\alpha) \in P_*(E)$ be such that $\tilde{v}(\alpha) <0$.
  % The sign of
  % $(\tilde{v}(\alpha),\tilde{v}_2(\alpha),\tilde{v}_3(\alpha))$ is
  % $(-,-,-)$ or $(-,+,+)$. Then $\mu \alpha$ has sign $(+,+,+)$ or
  % $(+,-,-)$, respectively.
  % Applying the same argument at each real place of type (ii)
  % the claim follows.
  %
  % If $v$ is a real place of type (i), and $(\alpha)$ is a principal
  % ideal such that $\tilde{v}(\alpha)<0$, the same unit $\mu$ as before
  % satisfies that $\tilde{v}(\mu \alpha)>0$.
\end{proof}
\begin{defi}
    \label{defi:Cl*}
    Denote by $\Cl(A_K) = \Frac(A_K)/P$ the class group of $A_K$ and
    by $\Cl_+(A_K) = \Frac(A_K)/P_+$ the narrow class group of $A_K$.
    Let $$\Cll(A_K,E) = \Frac(A_K)/P_*(E)$$ denote the class
    group attached to $P_*(E)$.
  \end{defi}
  \begin{remark}
      % Let $ \Cl(A_K)= \Frac(A_K)/P$ be class group of $A_K$
    % and $ \Cl_+(A_K)= \Frac(A_K)/P_+$ the narrow class group of
    % $A_K$. 
      If $ \Cl_+(A_K)= \Cl(A_K)$, then $P_+ = P = P_*(E)$, 
      therefore
    $\Cll(A_K,E) = \Cl(A_K)$. In particular, 
    $\Cll(A_K,E)$ is independent of the elliptic curve $E$.
  \end{remark}
  
\begin{prop}
\label{prop:C1asclassgroup}
  The group $\Ml(E)$ is isomorphic to the torsion $2$-subgroup of $\Cll(A_K,E)$,
  i.e. $\Ml(E) \simeq \Cll(A_K,E)[2]$.
\end{prop}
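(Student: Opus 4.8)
The plan is to construct an explicit surjection from $\Ml(E)$ onto $\Cll(A_K,E)[2]$ and identify its kernel with the trivial class. First I would recall, via Lemma~\ref{lemma:M1}, that an element $[\alpha]\in\Ml(E)$ is represented by $\alpha\in A_K^\times$ such that the fractional ideal $(\alpha)$ has even valuation at every finite place of $A_K$; hence $(\alpha)=\id{a}^2$ for a unique fractional ideal $\id{a}$ of $A_K$. The assignment $[\alpha]\mapsto[\id{a}]$ is the candidate map. I would check it is well-defined: changing $\alpha$ by a square $\beta^2$ in $A_K^\times$ changes $\id{a}$ by the principal ideal $(\beta)$, and one must verify $(\beta)\in P_*(E)$ — but since $[\alpha\beta^2]=[\alpha]$ still lies in $\Ml(E)$, the sign conditions on $\alpha$ at the real places of $A_K$ force $\tilde v(\beta^2)>0$ automatically, and more importantly one checks the defining condition of $P_*(E)$ (that $\tilde v_2(\beta)\tilde v_3(\beta)>0$ for $v$ of type (ii)) is satisfied; here is where one uses that the ramification condition in the definition of $\Ml(E)$ at $\tilde v_2,\tilde v_3$ is symmetric. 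The image $[\id{a}]$ is $2$-torsion in $\Cll(A_K,E)$ because $\id{a}^2=(\alpha)\in P_*(E)$, the latter membership following from Lemma~\ref{l: other definition of P1} together with the facts that $\tilde v(\alpha)>0$ for all real $v$ of $K$ and $\norm(\alpha)\gg 0$, both part of the characterization in Lemma~\ref{lemma:M1}.

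Next I would prove surjectivity. Given a class in $\Cll(A_K,E)[2]$, pick an ideal $\id{a}$ representing it; then $\id{a}^2=(\alpha)$ for some $\alpha\in A_K^\times$ with $(\alpha)\in P_*(E)$, so by Lemma~\ref{l: other definition of P1} we may choose $\alpha$ with $\tilde v(\alpha)>0$ for all real places $v$ of $K$ and $\norm(\alpha)\gg0$. By Lemma~\ref{lemma:M1} the only remaining obstruction to $[\alpha]\in\Ml(E)$ is the congruence $\alpha\equiv\square\pmod{4A_{\cO}}$. To fix this, I would use the Chinese Remainder Theorem and weak approximation: after multiplying $\alpha$ by an appropriate square $\beta^2$ with $\beta$ a unit at all places dividing $2$ and chosen congruent to prescribed values modulo $4A_{\cO}$ (and with controlled signs), one arranges the congruence without disturbing $(\alpha)=\id{a}^2$ or the archimedean conditions. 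This step will use, crucially, hypothesis~\ref{hypo:KF}(1) (odd narrow class number of $K$) exactly as it was used in Lemma~\ref{l: other definition of P1}, to produce units of $K$ with arbitrary signs and to move between ideal classes.

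For injectivity I would take $[\alpha]\in\Ml(E)$ mapping to the trivial class, i.e. $\id{a}=(\beta)$ is principal with $(\beta)\in P_*(E)$; then $(\alpha)=(\beta^2)$, so $\alpha=u\beta^2$ for a unit $u\in A_{\cO}^\times$. The task is to show $[\alpha]=[u]$ is trivial in $\A$, i.e. $u$ is a square in $A_K^\times$. The local conditions defining $\Ml(E)$ (evenness of valuations is automatic for a unit; the archimedean positivity; and $u\equiv\square\pmod{4A_{\cO}}$) say that $A_K(\sqrt u)/A_K$ is everywhere unramified and split at the relevant real places, hence $A_K(\sqrt u)$ is contained in the narrow Hilbert class field of $A_K$ cut out by the sign conditions of $P_*(E)$. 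I expect the main obstacle to be precisely this last point: showing that the quadratic extension being trivial inside the class field forces $u$ itself to be a square, which requires comparing the unit group of $A_{\cO}$ with the $2$-torsion of $\Cll(A_K,E)$ via the exact sequence relating units, squares, ideals, and the class group — essentially a careful application of Kummer theory for the (narrow) Hilbert class field together with the evenness of the relevant class number of $K$. Once this matching of orders/injectivity is in place, combined with the surjection above, we conclude $\Ml(E)\simeq\Cll(A_K,E)[2]$.
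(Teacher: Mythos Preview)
Your map $[\alpha]\mapsto[\id{a}]$ (with $\id{a}^2=(\alpha)$) is well-defined into the ordinary class group $\Cl(A_K)$, but not into $\Cll(A_K,E)=\Frac(A_K)/P_*(E)$. Replacing $\alpha$ by $\alpha\beta^2$ for an arbitrary $\beta\in A_K^\times$ replaces $\id{a}$ by $\id{a}(\beta)$; for the image in $\Cll(A_K,E)$ to be unchanged you need $(\beta)\in P_*(E)$, but $\beta$ is completely unconstrained --- the conditions defining $\Ml(E)$ are conditions on $\alpha$ alone, and since $\beta^2$ is totally positive they are automatically preserved regardless of the signs of $\beta$. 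Whenever $P_*(E)\subsetneq P$ (precisely the case of interest, e.g.\ Example~\ref{exm:sortingroots}, where $\Cl(A_\QQ)$ is trivial but $\Cll(A_\QQ,E)$ has order~$2$) one can pick $\beta$ with $(\beta)\notin P_*(E)$, and then the trivial class $[1]=[\beta^2]\in\Ml(E)$ is sent to two distinct elements of $\Cll(A_K,E)$. The surjectivity step has an independent gap: if $\beta$ is a unit at a prime $\id{q}\mid2$ then $\alpha$ and $\alpha\beta^2$ differ by a local square at $\id{q}$, so one is $\equiv\square\pmod{4}$ iff the other is; no adjustment of this kind can manufacture the missing congruence.

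The paper sidesteps all of this by not building a direct ideal-theoretic map. It takes $L$ to be the maximal abelian extension of $A_K$ that is unramified at all finite places, unramified at each distinguished real place $\tilde v$, and satisfies $G_{\tilde v_2}=G_{\tilde v_3}$ inside $\Gal(L/A_K)$ for every type~(ii) place $v$. Kummer theory identifies $\Ml(E)\simeq\Hom(\Gal(L/A_K),\mu_2)$, since the elements of $\Ml(E)$ parametrize exactly the quadratic subextensions of $L$; Artin reciprocity identifies $\Gal(L/A_K)\simeq\Cll(A_K,E)$, the kernel of $\rec$ on $\Frac(A_K)$ being exactly $P_*(E)$. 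Hence $\Ml(E)\simeq\Hom(\Cll(A_K,E),\mu_2)$, and for a finite abelian group this has the same $\FF_2$-rank as $\Cll(A_K,E)[2]$. The isomorphism in the proposition is thus one of abstract $\FF_2$-vector spaces coming from duality, which explains why your attempt to write down a natural map in one direction runs into trouble.
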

\begin{proof}
Let $L$ be the maximal abelian extension of $A_K$ satisfying:
\begin{itemize}
  \item it is unramified at all finite places of $A_K$,
  \item it is unramified at $\tilde{v}$ for all real places $v$ of $K$,
  \item for each $v$ of type (ii), 
      %the fixed fields by 
      ${G_{\tilde{v}_2}}={G_{\tilde{v}_3}}$
      as subgroups of $\Gal(L/A_K)$.
\end{itemize}
 % The last condition means that for every subextension $E/A_K$ of $L$ we
 % have $\tilde{v}_2$ ramifies $\Leftrightarrow$ $\tilde{v}_3$ ramifies.
Then $L$ is a finite extension of $A_K$, and
$\Ml(E) \simeq \Hom(\Gal(L/A_K),\mu_2)$.  The Artin reciprocity map
$\rec:\Frac(A_K)\to\Gal(L/A_K)$ has kernel $P_*(E)$, hence
$\Cll(A_K,E)\simeq\Gal(L/A_K)$.  It follows that
$\Ml(E)\simeq\Cll(A_K,E)[2]$ as claimed.
  % $2$-torsion part of $\Cl_*(A_K)$.% which equals $\Cl_*(A_K)$ by Lemma~\ref{lemma:P1P*}.
% Let $L = A_K[\sqrt{\alpha} \,:\, \alpha \in \Ml]$.
% This is a finite extension of $A_K$, and
% $\Ml \simeq \Hom(\Gal(L/A_K),\mu_2)$. %\simeq \Gal(L/A_K)$.
% The field $L$ is the maximal abelian extension of $A_K$ of
% exponent $2$ satisfying:
%   \begin{itemize}
%   \item it is unramified at all finite places of $A_K$,
%   \item it is unramified at $\tilde{v}$ for all real places $v$ of $K$,
%   \item for each $v$ of type (ii),
%       $L^{G_{\tilde{v}_2}}=L^{G_{\tilde{v}_3}}$.
% and every subextension $L/F/A_K$,
%       then $\tilde{v}_2$ ramifies if and only if $\tilde{v}_3$ does.
  % \end{itemize}
  % Then Class Field Theory implies that the group $\Gal(L/A_K)$ is isomorphic to the
  % $2$-torsion part of $\Cl_*(A_K)$.% which equals $\Cl_*(A_K)$ by Lemma~\ref{lemma:P1P*}.
\end{proof}

\begin{thm}
\label{thm:M2order}
  The index $[\Mu(E):\Ml(E)] \le 2^{[K:\QQ]}$.  
\end{thm}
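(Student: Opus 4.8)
The strategy is to show that $\Mu(E)/\Ml(E)$ injects into a product of local factors indexed by the places of $K$ dividing $2$, and that each such factor has order at most $2^{[K_v:\QQ_2]}$, so that the total bound is $\prod_{v \mid 2} 2^{[K_v:\QQ_2]} = 2^{[K:\QQ]}$. Recall from Lemma~\ref{lemma:M1} that $\Ml(E)$ is cut out inside $\Mu(E)$ by exactly one extra condition: the congruence $\alpha \equiv \square \pmod{4 A_{\cO}}$ at the places above $2$ (the conditions on even valuations at finite places, positivity at $\tilde v$, and $\norm(\alpha)\gg 0$ already hold for elements of $\Mu(E)$, and $\norm(\alpha)=\square$ is automatic from $[\alpha]\in\Mu(E)$; the only genuine discrepancy between the two definitions is the quadratic-residue condition modulo $4 A_\cO$ at the dyadic places). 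Thus I would define, for each $v \mid 2$, the local map $\Mu(E) \to A_{\cO_v}^\times/U_{4,v}$, where $U_{4,v}$ is the group $U_4$ of Lemma~\ref{lemma:squarecondition} attached to the local algebra $A_{K_v}$, and assemble these into $\psi\colon \Mu(E) \to \prod_{v\mid 2} A_{\cO_v}^\times/U_{4,v}$. By the definition of $U_4$ and Lemma~\ref{lemma:M1}, the kernel of $\psi$ is precisely $\Ml(E)$.

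Next I would estimate the size of the target. For each $v\mid 2$, an element of $\Mu(E)$ has a representative that is a unit locally at $v$ (its valuations at the places above $v$ are even, so up to squares it lies in $A_{\cO_v}^\times$). So the $v$-component of $\psi$ takes values in the image of $(A_{\cO_v}^\times/(A_{\cO_v}^\times)^2)_\square$ inside $A_{\cO_v}^\times/U_{4,v}$, since elements of $\Mu(E)$ have square norm locally. Combining the count $\#\bigl((A_{\cO_v}^\times/(A_{\cO_v}^\times)^2)_\square\bigr) = [\cO_v:2\cO_v]^2\cdot|E(K_v)[2]| = [\cO_v:2\cO_v]^2 \cdot 2^{t_v-1}$ from Lemma~\ref{lemma:Asq} with the index $\#(U_{4,v}/(A_{\cO_v}^\times)^2) = 2^{t_v-1}$ from part~\ref{part:U4cardinality} of Lemma~\ref{lemma:squarecondition} (where $t_v$ is the number of field factors of $A_{K_v}$), the image of the norm-square units in $A_{\cO_v}^\times/U_{4,v}$ has order at most $[\cO_v:2\cO_v]^2 \cdot 2^{t_v-1} / 2^{t_v-1} = [\cO_v:2\cO_v]^2$. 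This is still too large by a factor of $[\cO_v:2\cO_v]$; the extra saving must come from the constraint linking the dyadic data to the global structure, or from the square-norm condition cutting the local image further.

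The place where I expect the real work to lie is precisely in extracting that last factor of $[\cO_v:2\cO_v]$. Here Lemma~\ref{lemma:fixed} is the key tool on the global side: for $[\alpha]\in\Mu(E)$ the element $\norm(\alpha)$ already has even valuation everywhere and is totally positive, so the discrepancy is governed entirely by $\norm(\alpha) \bmod (\cO^\times)^2$ at the dyadic places, and the odd narrow class number of $K$ forces $\norm(\alpha)$ to be a square as soon as it is a square modulo $4\cO$ at every $v\mid 2$. Concretely, I would factor $\psi$ further through the norm: the obstruction to an element of $\Mu(E)$ lying in $\Ml(E)$ is measured in $\prod_{v\mid 2}(A_{\cO_v}^\times/(A_{\cO_v}^\times)^2)_\square \big/ (U_{4,v}\text{-classes})$, but the subgroup of elements whose $v$-components have the additional property that $\norm$ is a square modulo $4\cO_v$ for \emph{all} $v\mid 2$ simultaneously must actually be a global square by the odd class number hypothesis, and hence already lies in $U_{4,v}$ locally everywhere. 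A clean way to package this: the composite $\Mu(E) \to \prod_{v\mid 2} A_{\cO_v}^\times/U_{4,v} \xrightarrow{\ \norm\ } \prod_{v\mid 2}\cO_v^\times/(\cO_v^\times)^2_\square \cap (1+4\cO_v)\text{-cosets}$ has kernel exactly the locus where the refined norm condition holds, and on that kernel, the odd-narrow-class-number argument of Lemma~\ref{lemma:fixed} shows membership in $\Ml(E)$; so $[\Mu(E):\Ml(E)]$ is bounded by the order of the image of the norm map, which lands in $\cO^\times/(\cO^\times)^2$ and is controlled by $\prod_{v\mid 2}[\cO_v:2\cO_v] \cdot 2^{-(\text{something})}$, yielding at worst $2^{[K:\QQ]}$. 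Sorting out the exact bookkeeping of the $2^{t_v-1}$ factors and the role of the square-norm condition is the step that requires care, but once the reduction to a product of dyadic local indices is in place the arithmetic is routine.
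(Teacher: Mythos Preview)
Your reduction is correct up to the point where you compute the local target at the dyadic places: you do obtain an injection
\[
  \Mu(E)/\Ml(E) \hooklongrightarrow \prod_{v\mid 2}\; \AsqOv \big/ \bigl(U_{4,v}/(A_{\cO_v}^\times)^2\bigr),
\]
and the order of each factor is $[\cO_v:2\cO_v]^2\cdot 2^{t_v-1}/2^{t_v-1}=2^{2[K_v:\QQ_2]}$, so the product has order $2^{2[K:\QQ]}$. You have correctly identified that this is off by a factor of $2^{[K:\QQ]}$, but your proposed fix does not work. The global input you invoke --- that $\norm(\alpha)$ is a square by the odd narrow class number of $K$ --- is already part of the \emph{definition} of $\Mu(E)$: every element of $\Mu(E)$ has $\norm(\alpha)=\square$ in $K$, hence in every $K_v$, hence a square modulo $4\cO_v$. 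So the ``refined norm condition'' you propose is vacuous on $\Mu(E)$ and cannot cut the image further. There is no additional dyadic constraint linking the places $v\mid 2$ that your argument has not already used; the missing factor is genuinely global and does not live at $2$.

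The paper's proof takes a completely different route. Instead of localizing at $2$, it maps $\Mu(E)$ to $\Cl(A_K)$ by $[\alpha]\mapsto[I]$ where $I^2=(\alpha)$, compares this with the natural map $\Cll(A_K,E)[2]\to\Cl(A_K)$, and reduces the index bound to
\[
  [\Mu(E):\Ml(E)] \le \frac{\#\ker\phi}{\#(P/P_*(E))}
  \quad\text{with}\quad
  \ker\phi = \AsqO\cap\Mu(E).
\]
The right-hand side is then computed entirely via the \emph{archimedean} sign map and Dirichlet's unit theorem (Lemma~\ref{lemma:Asqorder} gives $\#\AsqO=2^{a+2b+2c}$), yielding exactly $2^{a+b+2c}=2^{[K:\QQ]}$. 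The saving you are missing is precisely this unit-theoretic input: the global units, not the dyadic congruences, control the gap between $\Mu(E)$ and $\Ml(E)$.
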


Before giving the proof, we need some auxiliary results. Let $A$, $B$
and $C$ be the set of archimedean places of $K$ of type (i), (ii) and (iii)
respectively, and let $a, b, c$ denote their cardinalities, so
$[K:\Q]=a+b+2c$. Consider the \emph{sign map}
\begin{equation*}
  %\label{eq:signdef}
  \sign : A_K^\times \to \prod_{v \in A} \{\pm 1\} \times
          \prod_{v \in B} (\{\pm 1\} \times \{\pm 1\} \times \{\pm 1\})\,.
\end{equation*}
%For convenience sometimes we identify the set $\{ \pm 1\}$ with
%$\FF_2$ so we can use linear algebra.  
This induces a well defined map on $\A$.
Let
%\begin{align*}
%    W &= \{(1,1,1),(1,-1,-1),(-1,1,-1),(-1,-1,1)\}\,, \\
%    V &= \{(1,1,1),(1,-1,-1)\}
%    %\subset W
%    \,,
%\end{align*}
%and let
\begin{align*}
    \tilde{W} & = \prod_{v \in A} \{1\} \times \prod_{v \in B} W \\
\intertext{where $W = \{(1,1,1),(1,-1,-1),(-1,1,-1),(-1,-1,1)\}$, and
let}
    \tilde{V} & = \prod_{v \in A} \{1\} \times \prod_{v \in B}V
    \subset\tilde{W}
\end{align*}
where $V = \{(1,1,1),(1,-1,-1)\}$.
Note that
$\sign(\Asq) \subset \tilde{W}$, and that $\sign(\Mu(E))\subset\tilde{V}$.
% \begin{remark}
%   The fact that the narrow class number of $K$ is odd implies that the
%   squares of $\Om^\times$ are precisely the ones in the kernel of the
%   sign map.
% \end{remark}
\begin{lemma}
\label{lemma:directsum}
  % The following equality holds
  % \[
  % \sign(A_\Om^\times)\cdot \tilde{V} =
  % \sign(\Om^\times/(\Om^\times)^2) \oplus
  % (\sign(\AsqO) \cdot \tilde{V})
  % \,.
  % \]
  There is an isomorphism
  \[
  \sign(\AsqO) \cdot \tilde{V}
  \simeq
  \frac{ \sign(A_\Om^\times)\cdot \tilde{V} }
       { \sign(\Om^\times) }
  \,.
  \]
\end{lemma}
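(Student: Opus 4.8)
The plan is to argue entirely inside the finite abelian $2$-group
\[
T:=\prod_{v\in A}\{\pm1\}\times\prod_{v\in B}\bigl(\{\pm1\}\times\{\pm1\}\times\{\pm1\}\bigr),
\]
the target of the sign map, in which $\sign(\AsqO)$, $\tilde V$, $\tilde W$, $\sign(\Om^\times)$ and $\sign(A_\Om^\times)$ are all subgroups. The crux is the identity
\[
\sign(A_\Om^\times)=\sign(\AsqO)\cdot\sign(\Om^\times),
\]
which I will call $(\ast)$. Granting $(\ast)$, the numerator $\sign(A_\Om^\times)\cdot\tilde V$ on the right-hand side of the lemma rewrites as $\bigl(\sign(\AsqO)\cdot\tilde V\bigr)\cdot\sign(\Om^\times)$; since $\sign(\Om^\times)\subseteq\sign(A_\Om^\times)\subseteq\sign(A_\Om^\times)\cdot\tilde V$, the second isomorphism theorem then identifies the right-hand side of the lemma with
\[
\frac{\sign(\AsqO)\cdot\tilde V}{\bigl(\sign(\AsqO)\cdot\tilde V\bigr)\cap\sign(\Om^\times)}.
\]
So two things remain: prove $(\ast)$, and show that $\sign(\AsqO)\cdot\tilde V$ meets $\sign(\Om^\times)$ trivially.

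For $(\ast)$, the inclusion $\supseteq$ is immediate from $\Om^\times\subseteq A_\Om^\times$. For $\subseteq$, given $\alpha\in A_\Om^\times$ set $\nu:=\norm_{A_K/K}(\alpha)\in\Om^\times$ and $\beta:=\alpha\,\nu^{-1}\in A_\Om^\times$. Since $\nu^{-1}\in K^\times$ and $[A_K:K]=3$, we get $\norm_{A_K/K}(\beta)=\nu\cdot\nu^{-3}=\nu^{-2}$, a square, so $[\beta]\in\AsqO$; hence $\sign(\alpha)=\sign(\beta)\,\sign(\nu)\in\sign(\AsqO)\cdot\sign(\Om^\times)$. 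In words, every global unit of $A_K$ differs from one of square norm by a unit pulled back from $K$ — namely by its own norm.

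For the intersection, I would first note $\sign(\AsqO)\subseteq\tilde W$: a class in $\AsqO$ has totally positive norm, so at each type (i) place its $\RR$-component is positive, while at each type (ii) place the product of its three signs equals $\sign(\norm)=1$, which is exactly the relation defining $W$. As $V\subseteq W$ we also have $\tilde V\subseteq\tilde W$, so $\sign(\AsqO)\cdot\tilde V\subseteq\tilde W$, and it suffices to show $\tilde W\cap\sign(\Om^\times)=\{1\}$. But a unit $u\in\Om^\times$ has the same sign at all three real places of $A_K$ above a type (ii) place $v$, and $(\varepsilon,\varepsilon,\varepsilon)\in W$ only for $\varepsilon=1$; since $\tilde W$ is moreover trivial at the type (i) places, $u$ must be totally positive, whence $\sign(u)=1$. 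Feeding this into the second isomorphism theorem together with $(\ast)$ gives the claimed isomorphism.

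I expect the only real point to be $(\ast)$, and it is dealt with by the one-line norm trick above; all the rest is routine bookkeeping about archimedean signs. (Incidentally, organized this way, the argument does not use the hypothesis that the narrow class number of $K$ is odd.)
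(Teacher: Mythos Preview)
Your proof is correct and follows essentially the same approach as the paper: the identity $(\ast)$ is exactly the paper's surjectivity step (the paper writes $\sign(\alpha)=\sign(\alpha\norm(\alpha))\,\sign(\norm(\alpha))$ with $\alpha\norm(\alpha)\in\AsqO$, which is your norm trick up to a sign), and your intersection argument $\tilde W\cap\sign(\Om^\times)=\{1\}$ is verbatim the paper's injectivity step. The only difference is packaging---you invoke the second isomorphism theorem explicitly while the paper checks bijectivity of the induced map directly---and your parenthetical observation that the narrow class number hypothesis is unused here is also true of the paper's proof.
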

\begin{proof}
  The inclusion $\sign(\AsqO) \subset \sign(A_\Om^\times)$
  induces a morphism
  $\sign(\AsqO) \cdot \tilde{V}
  \to
  ( \sign(A_\Om^\times)\cdot \tilde{V} )/ \sign(\Om^\times)$.
  To prove it is surjective, let
  $\alpha \in A_\Om^\times$. Clearly
  $\norm(\alpha) \in \Om^\times$, so
  $\sign(\alpha) = 
  \sign(\alpha \norm(\alpha))\,
  \sign(\norm(\alpha))$ is the image of
  $\sign(\alpha\norm(\alpha))\in\sign(\AsqO)$.

  To prove it is injective note that
  $\sign(\AsqO)\cdot\tilde{V}\subset\tilde{W}$
  and
  $\sign(\Om^\times)$ satisfies that for
  places $v$ in $B$ its three coordinates are the same,
  hence
  $\sign(\Om^\times)\cap\tilde{W}$ is trivial.
\end{proof}

Let $[\alpha]\in\Mu(E)$. Since $w(\alpha)$ is even for all finite places
$w$ of $A_K$, there is a (unique) ideal $I\in\Frac(A_K)$ such that
$I^2=(\alpha)$. 

\begin{lemma} The association $[\alpha] \mapsto [I]$ induces a well defined map $\phi:\Mu(E)\to\Cl(A_K)$.
\end{lemma}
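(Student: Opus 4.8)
The plan is to first attach a fractional ideal to a class $[\alpha]\in\Mu(E)$, and then check that its class in $\Cl(A_K)$ is insensitive to the choice of representative. Recall that by hypothesis~\ref{hypo:KF}(2) the algebra $A_K$ is a cubic \emph{field}, so its ring of integers $A_{\cO}$ is a Dedekind domain and $\Frac(A_K)$ is the free abelian group on the nonzero primes of $A_{\cO}$; in particular it is torsion-free. Given $[\alpha]\in\Mu(E)$, choose any representative $\alpha\in A_K^\times$. By the definition of $\Mu(E)$ the valuation $w(\alpha)$ is even for every finite place $w$ of $A_K$, so
\[
  (\alpha)=\prod_w w^{w(\alpha)} = \Bigl(\prod_w w^{w(\alpha)/2}\Bigr)^{2},
\]
and since $\Frac(A_K)$ is torsion-free the ideal $I:=\prod_w w^{w(\alpha)/2}$ is the unique fractional ideal with $I^2=(\alpha)$.

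Next I would verify that $[I]\in\Cl(A_K)$ does not depend on the representative $\alpha$. Two representatives of the class $[\alpha]$ differ by a square, say $\alpha'=\alpha\gamma^2$ with $\gamma\in A_K^\times$. Then $(\alpha')=(\alpha)(\gamma)^2=\bigl(I(\gamma)\bigr)^2$, so by the uniqueness just established the square-root ideal of $(\alpha')$ is $I'=I(\gamma)$, which lies in the same ideal class as $I$ because $(\gamma)$ is principal. Hence $\phi([\alpha]):=[I]$ is a well defined element of $\Cl(A_K)$. The same uniqueness shows $\phi$ is a group homomorphism: if $I_1,I_2$ are the square-root ideals attached to $[\alpha_1],[\alpha_2]$, then $(I_1I_2)^2=(\alpha_1\alpha_2)$, so $\phi([\alpha_1]\,[\alpha_2])=[I_1I_2]=[I_1]\,[I_2]$.

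There is no real obstacle here; the statement is a bookkeeping fact. The only points to keep straight are that the archimedean positivity conditions and the norm condition entering the definition of $\Mu(E)$ play no role in this particular claim (they will matter afterwards, for instance in analysing the image and kernel of $\phi$), and that the uniqueness of the square-root ideal $I$ genuinely uses that $A_K$ is a field, so that $\Frac(A_K)$ is torsion-free.
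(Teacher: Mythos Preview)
Your proof is correct and follows essentially the same route as the paper: show that if the representative $\alpha$ is replaced by $\alpha\gamma^2$ then the square-root ideal $I$ is replaced by $I(\gamma)$, hence its class in $\Cl(A_K)$ is unchanged. You are a bit more explicit than the paper in constructing $I$, invoking torsion-freeness of $\Frac(A_K)$ for uniqueness, and verifying that $\phi$ is a homomorphism, but the argument is the same.
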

\begin{proof}
  Let
  $\alpha \in A_K^\times$ and $I \in P$ such that $I^2= (\alpha)$. If $\beta^2 \in (A_K^\times)^2$ then $(\alpha \beta^2) = I^2(\beta^2)= (I\beta)^2$. As $I(\beta)$ lies in the same class as
  $I$ then the map $\phi$ is well defined.
\end{proof}

\begin{proof}[Proof of Theorem~\ref{thm:M2order}] Consider the
  short exact sequences
  \[
      0 \longrightarrow
      \ker\phi \longrightarrow
      \Mu(E) \xrightarrow{\:\:\phi\:\:}
      \Cl(A_K)
  \]
  and
 \[
      0 \longrightarrow
      P/P_*(E) \longrightarrow
      \Cll(A_K,E)[2] \xrightarrow{\:\:\psi\:\:}
      \Cl(A_K)
  \]
  % \[
  %   \xymatrix{
  %  0 \ar[r]&   \ker(\phi) \ar[r]& \Mu \ar[r]^\phi & \Cl(A_K)[2]\\
% &      & & \Cl_*(A_K)[2]\ar[u]_\psi\\
% &     & & P/P_2\ar[u]\\
% & & & 0\ar[u]}
  %   \]
    where $\psi$ is the restriction of the natural projection
    $\Cll(A_K,E) \to \Cl(A_K)$.
    % The fact that $P/P_2$ is an elementary $2$-group implies that the
    % second sequence is exact.
    From the definition of $\Mu(E)$ and Lemma~\ref{l: other definition of P1}  it
    is clear that the image of $\phi$ is contained in that of $\psi$,
    and using Proposition~\ref{prop:C1asclassgroup} it follows
    that
    \begin{equation}
      \label{eq:inequal}
[\Mu(E):\Ml(E)] =
\frac{\#\Mu(E)}{\#\Cll(A_K,E)[2]} \le
\frac{\#\ker\phi}{\#(P/P_*(E))} \,.      
    \end{equation}

%Let $\sign_\Om$ denote the restriction of the sign function to
%$A_\Om^\times$.
If $[\alpha]
 \in \ker\phi$, then $(\alpha) = (\beta)^2$, so
$\alpha = \beta^2 \mu$, with
$\mu \in A_\Om^\times$.
Thus
\[
    \ker\phi = \AsqO \cap \Mu(E). %\sign^{-1}(\tilde{V}).
\]
The
%hypothesis that $K$ has odd narrow class number implies that all
%elements in $\ker(\sign_\Om)$ are squares in $A_\Om^\times$, so the
sign map induces an isomorphism
\[
\frac{\AsqO}{\AsqO \cap \Mu(E)} \simeq   \frac{\sign(\AsqO)}{\sign(\AsqO)\cap \tilde{V}}.
\]
By the second isomorphism theorem,
\[
\frac{\sign(\AsqO)}{\sign(\AsqO)\cap \tilde{V}} \simeq
\frac{\sign(\AsqO) \cdot \tilde{V}}{\tilde{V}},
\]
hence
% \[
%   \#\ker\phi
%   = \frac{\# \AsqO \cdot\# \tilde{V}\cdot\#\sign(\Om^\times)}
%          {\# (\sign(A_\Om^\times) \cdot \tilde{V})}.
% \]
\[
    \frac{\# \AsqO}{\# \ker\phi}
    =
    \frac{\#(\sign(\AsqO) \cdot \tilde{V})}{\#\tilde{V}}
    =
    \frac{\#(\sign(A_\Om^\times) \cdot \tilde{V})}
         {\#\tilde{V}\,\#\sign(\Om^\times)},
\]
where the last equality follows from Lemma~\ref{lemma:directsum}.

On the other hand,
%the sign map induces an isomorphism
\[
    \frac{P}{P_*(E)} \simeq
    \frac{A_K^\times}{A_\Om^\times\,\cdot\,\sign^{-1}(\tilde{V})}
    \simeq
    \frac{\sign(A_K^\times)}{\sign(A_\Om^\times) \cdot \tilde{V}}
    %A_K^\times / (A_\Om^\times\,\cdot\,\sign^{-1}(\tilde{V})) \simeq \sign(A_K^\times)/(\sign(A_\Om^\times) + \tilde{V})
\]
%via the $\sign$ map, so  Lemma~\ref{lemma:directsum} gives the formula
via the $\sign$ map. We conclude
    \[
[\Mu(E):\Ml(E)] \le \frac{\#\ker\phi}{\#(P/P_*(E))}=
\frac{\#\tilde{V}\,\#\sign(\Om^\times)\,\# \AsqO}{\#\sign(A_K^\times)}
\]
and the theorem follows from 
$\#\tilde{V} = 2^b$, $\#\sign(\Om^\times) = 2^{a+b}$,
%$\#\AsqO = 2^{a+2b+2c}$ (see Lemma~\ref{lemma:Asqorder} below),
$\# \sign(A_K^\times) = 2^{a+3b}$,
$a+b+2c =[K:\QQ]$ and the following lemma.
\end{proof}

\begin{lemma}
  \label{lemma:Asqorder}
  With the previous notation, $\#\AsqO = 2^{a+2b+2c}$.
\end{lemma}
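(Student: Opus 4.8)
The plan is to identify $\AsqO$ with the kernel of a norm map between unit groups modulo squares, and then count. Recall that by Hypotheses~\ref{hypo:KF}(2) the algebra $A_K$ is a cubic number field, so $A_{\cO}$ is the ring of integers of a number field and Dirichlet's unit theorem applies.

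First I would record the archimedean places of $A_K$: decomposing $A_K\otimes_K K_v$ according to the three types of archimedean places of $K$, each place $v$ of type (i) contributes one real and one complex place of $A_K$, each place of type (ii) contributes three real places, and each place of type (iii) contributes three complex places. Hence $A_K$ has $a+3b$ real places and $a+3c$ complex places (consistently, $(a+3b)+2(a+3c)=3[K:\QQ]$). Next, the natural map $A_{\cO}^\times\to\A$ has kernel $A_{\cO}^\times\cap(A_K^\times)^2$, and since a global unit that is a square in $A_K^\times$ is automatically the square of a global unit (check the valuation at every finite place of $A_K$), this kernel equals $(A_{\cO}^\times)^2$. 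So the image of $A_{\cO}^\times$ in $\A$ is isomorphic to $A_{\cO}^\times/(A_{\cO}^\times)^2$, which by Dirichlet's theorem has order $2^{(a+3b)+(a+3c)}=2^{2a+3b+3c}$, using that $\mu(A_K)$ has even order because it contains $-1$. The same reasoning over $K$ gives $\cO_K^\times\cap(K^\times)^2=(\cO_K^\times)^2$, so the norm induces a well defined homomorphism $\bar\norm\colon A_{\cO}^\times/(A_{\cO}^\times)^2\to\cO_K^\times/(\cO_K^\times)^2$ whose kernel is, under the identification above, precisely $\AsqO$.

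Finally, $\bar\norm$ is surjective: for a constant unit $u\in\cO_K^\times\subset A_{\cO}^\times$ one has $\norm(u)=u^{3}\equiv u\pmod{(\cO_K^\times)^2}$, so every class of $\cO_K^\times/(\cO_K^\times)^2$ is hit. Since $K$ has $a+b$ real and $c$ complex places, $\#\bigl(\cO_K^\times/(\cO_K^\times)^2\bigr)=2^{a+b+c}$, and therefore
\[
  \#\AsqO \;=\; \frac{\#\bigl(A_{\cO}^\times/(A_{\cO}^\times)^2\bigr)}{\#\bigl(\cO_K^\times/(\cO_K^\times)^2\bigr)} \;=\; \frac{2^{2a+3b+3c}}{2^{a+b+c}} \;=\; 2^{a+2b+2c}.
\]
The argument is essentially bookkeeping; the step most prone to error is the count of archimedean places of $A_K$ in each of the three cases, together with the two elementary observations that a unit which is a perfect square is the square of a unit (this is what lets these unit groups modulo squares embed into $\A$ and into $K^\times/(K^\times)^2$). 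I do not expect any substantial obstacle.
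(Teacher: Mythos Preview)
Your proof is correct and follows essentially the same route as the paper: identify $\AsqO$ with the kernel of the norm map $A_{\cO}^\times/(A_{\cO}^\times)^2 \to \cO_K^\times/(\cO_K^\times)^2$, observe surjectivity via $\norm(u)=u^3\equiv u$ for $u\in\cO_K^\times$, and compute both orders with Dirichlet's unit theorem. You add a couple of details the paper leaves implicit (the explicit count of archimedean places of $A_K$ and the observation that a unit which is a square is the square of a unit), but the argument is the same.
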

\begin{proof}
  Consider the norm map
  $\norm: A_\Om^\times/(A_\Om^\times)^2 \to
  \Om^\times/(\Om^\times)^2$. This map is surjective since $[A_K:K]=3$
  (given $\epsilon \in \Om^\times$, $\norm(\epsilon) = \epsilon$ up to
  squares) and $\AsqO$ is by definition its kernel.
  By Dirichlet's unit theorem we have
  $\# \Om^\times/(\Om^\times)^2=2^{a+b+c}$.
  Likewise we have
  $\#A_\Om^\times/(A_\Om^\times)^2=2^{2a+3b+3c}$,
  and the result follows.
\end{proof}

\begin{remark}
  The inequality in Theorem~\ref{thm:M2order} becomes an equality if
  the image of $\psi$ equals that of $\phi$; in that case, the
  inequality in (\ref{eq:inequal}) becomes an equality and the proof
  continues mutatis mutandis. This is the case if for example $K$ is a
  totally real number field.  The reason is that a totally positive
  number field $K$ with odd class number satisfies that all totally
  positive units are squares. Then if $I \in \Cll(A_K,E)[2]$, by
  definition $I^2 = (\alpha)$, with $\alpha \in P_*(E)$. Clearly
  $\alpha$ has even valuation at all finite places, and satisfies the
  hypothesis on elements of $\Mu(E)$ at the archimedean places by
  definition of $P_*(E)$. Note that $\norm(\alpha)$ is a square up to
  a unit (it matches the norm of $I^2$), and it is totally positive,
  hence the unit must be also a square.
\end{remark}

Combining
Proposition~\ref{prop:selmer inclusion},
Proposition~\ref{prop:C1asclassgroup}
and Theorem~\ref{thm:M2order}, we obtain
\begin{thm}
\label{thm:bound}
  Let $K$ be a number field and let $E/K$ be an elliptic curve satisfying hypotheses~\ref{hypo:KF}. Then
  \[
    \dim_{\FF_2}\Cll(A_K,E)[2] \le \dim_{\FF_2}\Sel_2(E) \le \dim_{\FF_2}\Cll(A_K,E)[2]+{[K:\Q]}.
  \]
  In particular, if $K=\QQ$, the order of the Selmer group is determined by the $2$-torsion of $\Cll(A_K,E)$ and the root number of $E$.
\end{thm}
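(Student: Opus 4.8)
The plan is to read off both inequalities directly from the three results assembled just above, and then to sharpen the $K=\QQ$ case using a parity input. First I would record the structural observation that makes ``$\dim_{\FF_2}$'' well behaved here: the groups $\Ml(E)$, $\Sel_2(E)$ and $\Mu(E)$ are all subgroups of $\A = A_K^\times/(A_K^\times)^2$ (using the identification $H^1(G_K,E(\bar K)[2]) \cong \Asq$), hence elementary abelian $2$-groups, as is $\Cll(A_K,E)[2]$; so dimensions over $\FF_2$ are monotone under inclusion and additive along short exact sequences.

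For the lower bound I would use the inclusion $\Ml(E)\subset\Sel_2(E)$ of Proposition~\ref{prop:selmer inclusion} together with the isomorphism $\Ml(E)\simeq\Cll(A_K,E)[2]$ of Proposition~\ref{prop:C1asclassgroup}, giving $\dim_{\FF_2}\Cll(A_K,E)[2] = \dim_{\FF_2}\Ml(E) \le \dim_{\FF_2}\Sel_2(E)$. For the upper bound I would use the other inclusion $\Sel_2(E)\subset\Mu(E)$ of the same proposition, write
\[
\dim_{\FF_2}\Sel_2(E) \le \dim_{\FF_2}\Mu(E) = \dim_{\FF_2}\Ml(E) + \log_2[\Mu(E):\Ml(E)],
\]
and bound the two summands by Proposition~\ref{prop:C1asclassgroup} and by Theorem~\ref{thm:M2order} (which gives $[\Mu(E):\Ml(E)]\le 2^{[K:\QQ]}$), obtaining $\dim_{\FF_2}\Sel_2(E) \le \dim_{\FF_2}\Cll(A_K,E)[2] + [K:\QQ]$. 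This settles the displayed double inequality.

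For the final assertion I would specialise to $K=\QQ$, so the gap between the two bounds is exactly $1$: writing $d = \dim_{\FF_2}\Cll(A_\QQ,E)[2]$ we get $\dim_{\FF_2}\Sel_2(E)\in\{d,d+1\}$, and moreover $\Sel_2(E)$ itself is squeezed between $\Ml(E)$ and $\Mu(E)$, which differ by index at most $2$. To decide between the two values I would invoke the parity of the $2$-Selmer rank: since $E(\QQ)[2]=0$, comparing $\Sel_2(E)$ with $\Sel_{2^\infty}(E)$ via the usual descent exact sequences and using non-degeneracy of the Cassels--Tate pairing on $\Sha(E)/\Sha(E)_{\mathrm{div}}$ (which holds unconditionally) gives $\dim_{\FF_2}\Sel_2(E)\equiv\operatorname{corank}_{\ZZ_2}\Sel_{2^\infty}(E)\pmod 2$, and the $2$-parity theorem of Dokchitser--Dokchitser identifies $(-1)^{\operatorname{corank}_{\ZZ_2}\Sel_{2^\infty}(E)}$ with the global root number $w(E)$. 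Hence $(-1)^{\dim_{\FF_2}\Sel_2(E)}=w(E)$, which forces $\dim_{\FF_2}\Sel_2(E)=d$ (and then $\Sel_2(E)=\Ml(E)$) when $w(E)=(-1)^d$, and $\dim_{\FF_2}\Sel_2(E)=d+1$ (and then $\Sel_2(E)=\Mu(E)$) otherwise; either way the order of $\Sel_2(E)$ is pinned down by $\Cll(A_\QQ,E)[2]$ and $w(E)$.

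The main point to flag is that there is essentially no obstacle internal to this theorem: the two inequalities are pure bookkeeping once Propositions~\ref{prop:selmer inclusion} and \ref{prop:C1asclassgroup} and Theorem~\ref{thm:M2order} are in hand, and all the genuine work lives in those statements (the local Kummer-image computations of Section~1 feeding Proposition~\ref{prop:selmer inclusion}, and the sign-map/class-group comparison behind Theorem~\ref{thm:M2order}). The only ingredient imported from outside is the $2$-parity theorem used in the last paragraph, which I would cite rather than reprove.
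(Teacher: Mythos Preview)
Your proposal is correct and follows essentially the same route as the paper: the double inequality is obtained by combining Proposition~\ref{prop:selmer inclusion}, Proposition~\ref{prop:C1asclassgroup}, and Theorem~\ref{thm:M2order}, exactly as you do. For the ``in particular'' clause over $\QQ$ the paper leaves the parity input implicit (it is invoked later in the examples via \cite[Theorem~1.5]{MR1381589}); your explicit appeal to the $2$-parity theorem is the intended mechanism and fills in that detail cleanly.
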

This is a generalization of \cite[Theorem 2.18]{MR3934463}, noting
that if $\Disc(E)<0$ then $\Cll(A_\QQ,E) = \Cl(A_\QQ)$ (in particular it does not depend on the elliptic curve $E$). It is a natural
question whether the bound in Theorem~\ref{thm:bound} is sharp. We
will show some examples of elliptic curves over number fields which do
attain the lower and upper bound in Section~\ref{section:examples}.

\section{Application to quadratic twists}
For this section, $E/\QQ$ will denote an elliptic curve satisfying
hypotheses~\ref{hypo:KF}.  If $d \in \ZZ$, we denote by $E_d$ the
twist of $E$ by $\QQ(\sqrt{d})$, namely if $E$ is given by an equation
$E:y^2= F(x)$ then $E_d: dy^2 = F(x)$, which also equals
\[
  E_d:y^2=d^3F(x/d).
\]
Note that both $E$ and $E_d$ have the same attached cubic field.

\begin{lemma}
  If $d$ is a fundamental discriminant satisfying that all primes
  $p \mid d$ are inert or totally ramified in $A_\QQ$ then the twisted
  curve $E_d$ also satisfies hypotheses~\ref{hypo:KF}.
\end{lemma}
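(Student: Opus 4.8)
The plan is to verify each of the three conditions in Hypotheses~\ref{hypo:KF} for $E_d$, using that $E$ already satisfies them and that the $2$-torsion field is unchanged. Conditions (1) and (2) are immediate: the narrow class number of $\QQ$ is $1$ (odd), and $E_d(\QQ)[2]=\{0\}$ because $A_\QQ$ is the same cubic field for $E$ and $E_d$, so it has no rational root of $F$. The real content is condition (3): for every finite place $v=p$ of $\QQ$ we must check that $E_d/\QQ_p$ satisfies (\dag).

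First I would dispose of the primes $p$ that do not divide $d$. For such $p$ the twist $E_d$ is isomorphic over $\QQ_p$ to a quadratic unramified twist of $E$ (since $\QQ_p(\sqrt d)/\QQ_p$ is unramified or trivial when $p\nmid d$), and one checks that whichever clause of (\dag) held for $E/\QQ_p$ continues to hold for $E_d/\QQ_p$: clauses (\dag.i) and (\dag.ii) depend only on the polynomial $F$ (equivalently on $A_{\QQ_p}$ and $A_{\cO_p}$), which is unchanged up to the harmless substitution $x\mapsto d^{-1}x$ and rescaling; clause (\dag.iii) is preserved because $[E(\QQ_p):E_0(\QQ_p)]$ for a quadratic twist by an unramified character changes only by a power of $2$ in a way that keeps its parity when $p$ is odd and $d\in(\QQ_p^\times)^2$ (and when $d\notin(\QQ_p^\times)^2$ one reduces to (\dag.i)/(\dag.ii) on the unramified quadratic twist, whose root of integers is still generated appropriately); and clause (\dag.iv) is preserved since an unramified quadratic twist of a curve with good reduction over an unramified base still has good reduction. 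For the prime $p=2$, if $2\nmid d$ then the same unramified-twist analysis applies, using that $d$ is a fundamental discriminant so $d\equiv 1\pmod 4$ forces $\QQ_2(\sqrt d)/\QQ_2$ unramified.

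Next, and this is the main obstacle, I would treat a prime $p\mid d$. By hypothesis such $p$ is inert or totally ramified in $A_\QQ$, so $A_{\QQ_p}$ is a cubic field extension of $\QQ_p$ — that is, clause (\dag.i) holds for $E_d/\QQ_p$ — and we are done for that prime \emph{once we know $E_d:y^2=d^3F(x/d)$ is still given by a monic cubic square-free polynomial in $\cO_p[x]$}. Squarefreeness is automatic (the discriminant of $d^3F(x/d)$ is $d^6\Disc(F)$, nonzero). Monicity of $d^3F(x/d)$ in $x$ is clear. Integrality is the delicate point when $p$ ramifies in $A_\QQ$: we need the model to have $p$-integral coefficients, or more precisely we need a $\QQ_p$-model of $E_d$ of the form $y^2=(\text{monic cubic in }\cO_p[x])$ whose cubic still defines the field $A_{\QQ_p}$ — and for that it suffices to exhibit \emph{any} Weierstrass equation $y^2=G(x)$ for $E_d$ over $\cO_p$ with $G$ monic cubic squarefree, which then automatically satisfies (\dag.i) because $A_{\QQ_p}=\QQ_p[x]/(G(x))$ is the cubic field. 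Such a model always exists (clear denominators in a short Weierstrass equation using that $\cO_p$ is a PID), so (\dag.i) holds at every $p\mid d$.

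Finally I would remark that for $p=2$ in the case $2\mid d$ (i.e. $d$ even, so $d\in\{\pm 8\}\cdot(\text{odd part})$ with the odd primes inert/ramified, and $2$ itself — if $2$ ramifies in $A_\QQ$, again clause (\dag.i) applies; if $2$ is inert, same), the field $A_{\QQ_2}$ is a cubic field extension of $\QQ_2$ whenever $2$ is inert or ramified in $A_\QQ$, so (\dag.i) holds; the only remaining subcase is $2$ split in $A_\QQ$ with $2\nmid d$, already handled above. Assembling these cases gives (\dag) at all finite $v$, which with (1) and (2) proves that $E_d$ satisfies Hypotheses~\ref{hypo:KF}. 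I expect the bookkeeping around $p=2$ and around producing an integral cubic model at ramified primes to be the only genuinely fiddly part; everything else is a case check driven by "the $2$-division field does not change, and away from $d$ the twist is unramified".
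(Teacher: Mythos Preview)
Your overall strategy matches the paper's: verify (1) and (2) trivially, then check (\dag) prime by prime, using (\dag.i) at every $p\mid d$ (since $A_{\QQ_p}$ is then a field) and showing that each clause (\dag.i)--(\dag.iv) persists under an unramified quadratic twist at $p\nmid d$. Two points deserve correction.

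First, the worry about producing an integral monic model at primes $p\mid d$ is unnecessary. Since $F(x)\in\ZZ[x]$ is monic and $d\in\ZZ$, the polynomial $d^3F(x/d)=x^3+a_2 d\,x^2+a_4 d^2\,x+a_6 d^3$ lies in $\ZZ[x]$, is monic cubic, and has discriminant $d^6\Disc(F)\neq 0$; so (\dag.i) applies directly with no ``delicate'' integrality issue.

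Second, and this is a genuine gap, your treatment of (\dag.iii) at $p\nmid d$ does not work. When $d\in(\QQ_p^\times)^2$ there is nothing to do, but when $d$ is a non-square unit you claim one ``reduces to (\dag.i)/(\dag.ii)''. That is false: if $E/\QQ_p$ satisfies only (\dag.iii), then $A_{\QQ_p}$ is not a field and $\ZZ_p[T]/(F(T))\neq A_{\ZZ_p}$; since $d\in\ZZ_p^\times$, the substitution $T\mapsto dT$ gives $\ZZ_p[T]/(d^3F(T/d))\cong\ZZ_p[T]/(F(T))$, so (\dag.i) and (\dag.ii) fail for $E_d$ as well. The paper instead argues via N\'eron models: because $\QQ_p(\sqrt{d})/\QQ_p$ is unramified, the N\'eron models of $E$ and $E_d$ over $\ZZ_p$ become isomorphic after base change to $\cO_{\QQ_p(\sqrt{d})}$, so the geometric component groups coincide and the Frobenius actions differ by $[-1]$; since $[-1]$ acts trivially on $2$-torsion, $\Phi_E(\FF_p)[2]=0$ iff $\Phi_{E_d}(\FF_p)[2]=0$, whence oddness of $[E(\QQ_p):E_0(\QQ_p)]$ transfers to $E_d$. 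Replace your (\dag.iii) paragraph with this argument and the proof goes through.
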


\begin{proof}
  By definition, we need to check the condition locally at each prime
  $p$. Clearly the condition (\dag.i) is invariant under twisting
  (since the attached cubic field is invariant). Note that all primes $p$
  dividing $d$ belong to the case (\dag.i) by the hypothesis.
  Consider now a prime $p\nmid d$.  If $E/\QQ_p$ satisfies (\dag.ii)
  then the discriminants of $F(x)$ and $d^3F(x/d)$ differ by a unit,
  hence $E_d$ also satisfies (\dag.ii).  If $E/\QQ_p$ satisfies
  (\dag.iii) then $E_d$ also satisfies (\dag.iii), since for each $p> 2$ the parity of $[E(\Q_p) : E_0(\Q_p)]$ and $[E_d(\Q_p) : (E_d)_0(\Q_p)]$
are equal (see the proof of \cite[Lemma 5.6]{MR3954912}). At last, if $E/\QQ_p$ satisfies (\dag.iv)
  then $E_d$ also satisfies (\dag.iv), because for
  $d\equiv 1\pmod{4}$, $E$ has good reduction at $2$ if and only if
  $E_d$ does.
\end{proof}

In particular, if $d$ is a fundamental discriminant such that all
primes $p \mid d$ are inert in $A_\QQ$, we can apply
Theorem~\ref{thm:bound} to both $E$ and $E_d$.
The caveat is that if $\Delta(E)>0$
%$v$ is a real archimedean place of $K$ of type (ii),
the order of the roots of $F(T)$ (hence the distinguished place) is
reversed when $d<0$ and preserved when $d>0$, hence for $d<0$ the
class groups $\Cll(A_\QQ,E)$ and $\Cll(A_\QQ,E_d)$ might be
different (but only if $\Delta(E)>0$ and $\Cl_+(A_\QQ)\neq\Cl(A_\QQ)$).
This issue can be overcome if we consider pairs $d_1, d_2$
of discriminants satisfying the above hypothesis with $d_1/d_2>0$.

\begin{remark}
\label{rem:sameselmer}    
Let $E/\QQ$ be an elliptic curve satisfying hypotheses~\ref{hypo:KF},
%%%% GT: NO ME PARECE QUE SE USE
%and $d_1, d_2 \in \QQ$, the groups $E_{d_1}(\bar{K})[2]$ and
%$E_{d_2}(\bar{K})[2]$ are isomorphic as $G_K$-modules. In particular,
and let $d_1, d_2$ be fundamental discriminants satisfying that all primes
$p \mid d_i$, $i=1,2$ are inert in $A_\QQ$.
Suppose also that either (a) $\Delta(E)<0$, (b)
$\Cl_+(A_\QQ)=\Cl(A_\QQ)$, or (c) $d_1/d_2>0$. Then we
have the following diagram
  \[
    \xymatrix{
      \Ml(E_{d_1}) \ar@{=}[d]  \ar@{}[r]|-*[@]{\subset} & \Sel_2(E_{d_1}) \ar@{}[r]|-*[@]{\subset} & \Mu(E_{d_1}) \ar@{=}[d] \\
      \Ml(E_{d_2}) \ar@{=}  \ar@{}[r]|-*[@]{\subset} & \Sel_2(E_{d_2}) \ar@{}[r]|-*[@]{\subset} & \Mu(E_{d_2})\\
    }
\]
As the index $[\Mu(E_{d_1}):\Ml(E_{d_1})]=2$,
then $\Sel_2(E_{d_1})=\Sel_2(E_{d_2})$ if and only if both curves have
the same root number. In particular we have infinitely many twists of
$E$ with the same $2$-Selmer group.
%for infinitely many
%pairs of elements $d_1,d_2$, the $2$-Selmer groups $\Sel_2(E_{d_1})$
%and $\Sel_2(E_{d_2})$ coincide (depending on the root number of both curves).
  \end{remark}

  We can explicitly determine for which discriminants both Selmer
  groups coincide, and say something on their densities if we restrict
  to prime discriminants.  Let $p$ be an odd prime number, and let
  $p^\ast = \kro{-1}{p}p$; recall that the quadratic extension of
  $\QQ$ unramified outside $p$ corresponds to
  $\QQ(\sqrt{p^\ast})$.
  Let $\epsilon(E)$ denote the root number of
  $E$. Recall that if $p \nmid 2\Disc(E)$, then
\begin{equation*}
  %\label{eq:rootnumberchange}
    \epsilon(E) \epsilon(E_{p^\ast}) = \chi_p(-N_E),  
\end{equation*}
where $N_E$ is the conductor of $E$ and $\chi_p$ is the quadratic character unramified outside $p$.

%Let $d_1,d_2 \in \ZZ$ be fundamental discriminants such that all primes
%$p \mid d_i$, $i=1,2$ satisfy that ${p}$ is inert or totally ramified
%in $A_\QQ$.
%Then the curve $E_{d_i}$ also satisfies hypotheses~\ref{hypo:KF}, so we can
%apply Theorem~\ref{thm:bound}.
%If furthermore $d_1/d_2 > 0$ then both $E_{d_1}$ and $E_{d_2}$ have
%the same (lower and upper) bound on the $2$-Selmer group.
% Recall that Theorem~\ref{thm:bound} implies that the $2$-Selmer rank is determined by $\epsilon(E)$, the root number of $E$.

\begin{thm}
\label{thm:negativetwists}
  Let $E/\QQ$ be an elliptic curve satisfying
  hypotheses~\ref{hypo:KF}, and suppose furthermore that
  either $\Disc(E)<0$ or $\Cl_+(A_\QQ)=\Cl(A_\QQ)$.
    Then the set of prime
  numbers $p$ inert in $A_\Q$ has density at least $1/3$ and for any
  such prime $p$ which does not divide $\Disc(E)$ it holds:
  \begin{itemize}
      \item if $-\frac{\Delta(E)}{N_E} \equiv \square \pmod{p}$
          then $E$ and $E_{p^\ast}$
      have the same root number.
      In particular, both curves have the same $2$-Selmer group,
    
  \item otherwise, $E$ and $E_{p^\ast}$ have opposite root number,
    and all curves $E_{p^\ast}$ in this second case have the same
    $2$-Selmer group.
  \end{itemize}
  In particular, the set of all quadratic twists of $E$ by prime discriminants has a subset of density at least $1/6$ where all curves in this set have the same $2$-Selmer group.
\end{thm}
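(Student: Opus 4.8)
The plan is to assemble three ingredients: Chebotarev's density theorem to count the inert primes; the identity $\epsilon(E)\,\epsilon(E_{p^\ast})=\chi_p(-N_E)$ recalled above, together with an elementary analysis of the Legendre symbol $\kro{-N_E}{p}$; and Remark~\ref{rem:sameselmer}, whose hypotheses are available because, by the lemma preceding it, $E_{p^\ast}$ again satisfies hypotheses~\ref{hypo:KF}. Recall that that remark identifies the $2$-Selmer groups of two quadratic twists of $E$ precisely when they share a root number.

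First I would settle the density of inert primes. Let $\theta$ be a root of $F$, so $A_\QQ=\QQ(\theta)$; let $\tilde A_\QQ$ be a Galois closure of $A_\QQ$ and $G=\Gal(\tilde A_\QQ/\QQ)\hookrightarrow S_3$, so $G$ is $\ZZ/3$ or $S_3$. For $p\nmid\Disc(E)$ (which excludes $p=2$, since $2\mid\Disc(E)$), the relations $\Disc(E)=16\,\disc(F)$ and $\disc(F)=\ff^2\,\Disc(A_\QQ)$ with $\ff=[A_\cO:\ZZ[\theta]]$ show that $p$ is unramified in $A_\QQ$ and $p\nmid\ff$; hence the factorisation type of $F\bmod p$ coincides with the splitting type of $p$ in $A_\QQ$, and $p$ is inert exactly when $\Frob_p$ is a $3$-cycle in $G$. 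By Chebotarev the density of such primes is $2/3$ if $A_\QQ/\QQ$ is Galois and $1/3$ otherwise, in particular at least $1/3$.

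Next comes the root number dichotomy. For such $p$ (odd, prime to $\Disc(E)$) the recalled identity reads $\epsilon(E)\,\epsilon(E_{p^\ast})=\kro{-N_E}{p}$. The key point is that $\Disc(A_\QQ)\equiv\square\pmod p$ whenever $p$ is inert: this is automatic when $A_\QQ/\QQ$ is cyclic, since a cyclic cubic field has square discriminant, and when $G=S_3$ the $3$-cycle $\Frob_p$ lies in $A_3=\Gal(\tilde A_\QQ/\QQ(\sqrt{\Disc(A_\QQ)}))$, so $p$ splits in $\QQ(\sqrt{\Disc(A_\QQ)})$. Since $\Disc(E)$ and $\Disc(A_\QQ)$ lie in the same square class of $\QQ^\times$, we get $\Delta(E)\equiv\square\pmod p$, and therefore
\[
  -\frac{\Delta(E)}{N_E}\equiv\square\pmod p
  \iff -N_E\equiv\square\pmod p
  \iff \kro{-N_E}{p}=1
  \iff \epsilon(E)=\epsilon(E_{p^\ast}).
\]
This is exactly the claimed equivalence; moreover in the second case $\epsilon(E)\epsilon(E_{p^\ast})=-1$ for every inert $p$, so all the curves $E_{p^\ast}$ there share the root number $-\epsilon(E)$. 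To close the two bullet points I would invoke Remark~\ref{rem:sameselmer}: in the first case apply it with $d_1=1$ and $d_2=p^\ast$ (the standing hypothesis ``$\Disc(E)<0$ or $\Cl_+(A_\QQ)=\Cl(A_\QQ)$'' is precisely its alternative (a) or (b)), so that the equal root numbers force $\Sel_2(E)=\Sel_2(E_{p^\ast})$; in the second case, given inert primes $p,q\nmid\Disc(E)$ with $-N_E$ a non-residue modulo each, apply it with $d_1=p^\ast$, $d_2=q^\ast$, which carry the same root number $-\epsilon(E)$, to obtain $\Sel_2(E_{p^\ast})=\Sel_2(E_{q^\ast})$.

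Finally I would establish the density $\ge 1/6$ of a family with constant $2$-Selmer group. Let $\mathcal S$ be the set of inert primes $p\nmid\Disc(E)$ with $\kro{-N_E}{p}=1$, equivalently those split in $\QQ(\sqrt{-N_E})$; by the first bullet, $\Sel_2(E_{p^\ast})=\Sel_2(E)$ for every $p\in\mathcal S$. If $\QQ(\sqrt{-N_E})\subseteq\tilde A_\QQ$, then since the only quadratic subfield of $\tilde A_\QQ$ is $\QQ(\sqrt{\Disc(A_\QQ)})$ (which forces $G=S_3$, as $\ZZ/3$ has no index-two subgroup), every inert prime has $\Frob_p\in A_3$ and so splits in $\QQ(\sqrt{-N_E})=\QQ(\sqrt{\Disc(A_\QQ)})$; thus $\mathcal S$ is the whole set of inert primes, of density at least $1/3$. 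Otherwise $\QQ(\sqrt{-N_E})$ and $\tilde A_\QQ$ are linearly disjoint over $\QQ$, so in the compositum $\tilde A_\QQ(\sqrt{-N_E})$ the conditions ``$\Frob_p$ is a $3$-cycle'' and ``$p$ splits in $\QQ(\sqrt{-N_E})$'' are independent, and Chebotarev gives $\mathcal S$ density equal to half that of the inert primes, hence at least $1/6$. Either way $\{E_{p^\ast}:p\in\mathcal S\}$ is a set of quadratic twists of $E$ by prime discriminants, of density at least $1/6$, all with $2$-Selmer group $\Sel_2(E)$. The step I expect to be the main obstacle is this last piece of bookkeeping, namely separating the cubic splitting condition from the quadratic residue condition; it is resolved by the elementary structure of $S_3$ (its only index-two subgroup is $A_3$) together with the fact that cyclic cubic fields have square discriminant.
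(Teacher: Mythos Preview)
Your argument is correct and follows essentially the same route as the paper: the density of inert primes via Chebotarev, the root-number identity $\epsilon(E)\epsilon(E_{p^\ast})=\chi_p(-N_E)$ combined with the observation that $\chi_p(\Disc(A_\QQ))=1$ for inert $p$ (so that $\chi_p(-N_E)=\chi_p(-\Delta(E)/N_E)$), and then Remark~\ref{rem:sameselmer} under the standing hypothesis (a) or (b). The one place where you do more than the paper is the final ``density $\ge 1/6$'' clause: the paper leaves this implicit, since the inert primes (density $\ge 1/3$) split into two classes each with constant $2$-Selmer group, so by pigeonhole one class has density $\ge 1/6$; your case analysis on whether $\QQ(\sqrt{-N_E})\subset\tilde A_\QQ$ is correct and in fact establishes the slightly sharper statement that the \emph{first} class (with $\Sel_2(E_{p^\ast})=\Sel_2(E)$) already has density $\ge 1/6$, but the simpler pigeonhole suffices for what is claimed.
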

\begin{proof}
The density of prime discriminants that are inert in $A_\QQ/\QQ$
 equals
  \begin{equation*}
    \text{density} =
    \begin{cases} \frac{2}{3} & \text{ if $A_\QQ/\QQ$ is Galois},\\
      \frac{1}{3} & \text{ otherwise.}
      \end{cases}
    \end{equation*}
 Recall that for an
  elliptic curve of the form $E:y^2=F(x)$,
  $\Disc(E) = 2^4\Disc(F(x))$, hence $\Disc(A_\Q)$ differ from
  $\Disc(E)$ by a square. In particular, since $p$ is
  inert in $A_\Q$, $\chi_p(\Disc(A_\Q))=1$, and 
  \begin{equation*}
    %\label{eq:squarecondition}
      \epsilon(E) \epsilon(E_{p^\ast}) = \chi_p(-N_E) = \chi_p\left(-\frac{\Disc(E)}{N_E}\right).    
\end{equation*}
This proves the claim on the root numbers. The result on the
$2$-Selmer group follows from Remark~\ref{rem:sameselmer},
noting that when $\Delta(E)<0$ there are no real places of
type (ii) so in the bounds of Theorem~\ref{thm:bound}
are independent of $E$; and this is always the case when $\Cl_+(A_\QQ)=\Cl(A_\QQ)$.
  \end{proof}

  Recall the definitions given in the introduction: let $d_2(E)$
  denote the $2$-Selmer rank of $E$ and define
\[
  N_r(E,X) = |\{\text{quadratic } L/\QQ \; : \; d_2(E^L) = r \text{ and }|\delta(L/\QQ)|<X\}|,
  \]
  where $E^L$ denotes the quadratic twist of $E$ corresponding to $L$ and $\delta(L/\QQ)$ is the discriminant of the extension $L/\QQ$.
  
  \begin{coro}
    \label{coro:twists} Let $E/\QQ$ be an elliptic curve satisfying
    hypotheses~\ref{hypo:KF}, and suppose furthermore that either
    $\Disc(E)<0$ or $\Cl_+(A_\QQ)=\Cl(A_\QQ)$. Let $r \ge 0$, and
    suppose that $E$ has a quadratic twist by a prime inert in $A_\QQ$
    whose $2$-Selmer group has rank $r$.  Then
    $N_r(E,X) \gg X/\log(X)^{1-\alpha}$, where
    \[
      \alpha = \begin{cases}
        1/3 & \text{if }A_\QQ/\QQ \text{ is Galois},\\
        1/6 & \text{otherwise.}
      \end{cases}
      \]
  \end{coro}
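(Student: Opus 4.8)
The plan is to produce the twists required by the corollary as $E_{d'}$, where $d'$ ranges over a large family of fundamental discriminants supported on primes inert in $A_\QQ$, all chosen so that $E_{d'}$ has the same root number as the given twist; the count of such $d'$ with $|d'|<X$ will then follow from a classical analytic estimate.

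Fix the prime $q$ (inert in $A_\QQ$) with $\dim_{\FF_2}\Sel_2(E_{q^\ast})=r$. By the lemma preceding Remark~\ref{rem:sameselmer}, every $E_{d'}$ with $d'$ a fundamental discriminant all of whose prime divisors are inert in $A_\QQ$ again satisfies hypotheses~\ref{hypo:KF}; moreover, since the hypothesis $\Disc(E)<0$ or $\Cl_+(A_\QQ)=\Cl(A_\QQ)$ is preserved under twisting, Remark~\ref{rem:sameselmer} applied to $d_1=d'$, $d_2=q^\ast$ shows that $\Sel_2(E_{d'})=\Sel_2(E_{q^\ast})$, hence $d_2(E_{d'})=r$, provided $\epsilon(E_{d'})=\epsilon(E_{q^\ast})$. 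So it suffices to exhibit many such $d'$ with the prescribed root number.

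I would take $d'=(qm)^\ast=q^\ast m^\ast$, where $m$ runs over squarefree products of primes from
\[
  \mathcal{T}=\{\,p\ \text{prime}\;:\;p\ \text{is inert in}\ A_\QQ,\ p\nmid 2qN_{E_{q^\ast}},\ \chi_p(-N_{E_{q^\ast}})=1\,\}.
\]
For such $d'$ one has $E_{d'}=(E_{q^\ast})_{m^\ast}$, and the standard formula for the variation of the root number under quadratic twisting (valid here because $m^\ast$ is prime to $2N_{E_{q^\ast}}$) gives $\epsilon(E_{d'})=\epsilon(E_{q^\ast})\,\chi_{m^\ast}(-N_{E_{q^\ast}})=\epsilon(E_{q^\ast})\prod_{p\mid m}\chi_p(-N_{E_{q^\ast}})=\epsilon(E_{q^\ast})$, so $d_2(E_{d'})=r$ for every such $d'$. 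Next I would check that $\mathcal{T}$ has density at least $\alpha$: by the computation in the proof of Theorem~\ref{thm:negativetwists} the primes inert in $A_\QQ$ have density $2/3$ when $A_\QQ/\QQ$ is Galois and $1/3$ otherwise, and intersecting with the Chebotarev condition $\chi_p(-N_{E_{q^\ast}})=1$ either leaves this density unchanged or exactly halves it. Indeed, when $A_\QQ/\QQ$ is Galois its Galois closure has no quadratic subfield, so the two conditions are independent; and when $A_\QQ/\QQ$ is non-Galois the unique quadratic subfield of its $S_3$-closure is $\QQ(\sqrt{\Disc(A_\QQ)})$, on which the Frobenius of an inert prime (a $3$-cycle, hence in the alternating subgroup) acts trivially, so again the density drops at most by a factor $2$. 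In all cases the density of $\mathcal{T}$ is $\ge\alpha$.

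Finally, distinct squarefree $m$ give distinct $d'$ with $|\delta(\QQ(\sqrt{d'})/\QQ)|=|d'|=qm$, whence
\[
  N_r(E,X)\ \ge\ \#\{\,m\le X/q\;:\;m\ \text{squarefree},\ p\mid m\Rightarrow p\in\mathcal{T}\,\}.
\]
By Landau's classical estimate for integers all of whose prime factors lie in a set of natural density $\delta$ — equivalently, by the Selberg--Delange method applied to $\prod_{p\in\mathcal{T}}(1-p^{-s})^{-1}\sim c\,(s-1)^{-\delta}$ as $s\to 1^{+}$ — the right-hand side is $\gg (X/q)/\log(X/q)^{1-\delta}\gg X/\log(X)^{1-\alpha}$, since $\delta\ge\alpha$ and $q$ is a fixed constant, and the corollary follows. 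I expect the main obstacle to be not the analytic lower bound, which is entirely standard, but the bookkeeping: confirming the multiplicative twisting formula $\epsilon(E_{d'})=\epsilon(E_{q^\ast})\prod_{p\mid m}\chi_p(-N_{E_{q^\ast}})$ for products of arbitrarily many primes (this is why the coprimality condition is built into $\mathcal{T}$), and nailing down the density of $\mathcal{T}$ in the Galois-versus-non-Galois dichotomy through the behaviour of $\Disc(A_\QQ)$ inside the $S_3$-closure.
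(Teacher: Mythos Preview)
Your proof is correct and follows precisely the strategy the paper has in mind: the paper's own proof is a one-line appeal to Ikehara's tauberian theorem as in \cite{MR3954912}, and what you have written is exactly the standard unpacking of that appeal --- build composite discriminants $(qm)^\ast$ out of inert primes in a set $\mathcal{T}$ of density $\ge\alpha$ chosen to preserve the root number, invoke Remark~\ref{rem:sameselmer} to pin the $2$-Selmer rank, and count via Landau/Selberg--Delange. Your density analysis of $\mathcal{T}$ in the Galois versus non-Galois cases is correct and more explicit than anything the paper records.
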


  \begin{proof}
    The proof is a standard application of Ikehara's tauberian
    theorem, as explained in \cite{MR3954912}, proof of Theorem 1.12.
  \end{proof}
  
  \begin{remark} If $-\frac{\Disc(E)}{N_E}$ is a square then all inert
    primes lie in the first case of Theorem~\ref{thm:negativetwists}
    (and the proportion of twists with the same $2$-Selmer group
    raises to $1/3$, and in the previous Corollary the constant
    $\alpha$ is doubled). This is the case for example if the elliptic
    curve $E$ is semistable of odd conductor and
    $\Delta(E)<0$. % and satisfies hypotheses~\ref{hypo:KF}.
    In such case Ogg's formula (\cite{MR952229}) implies that for each
    prime of (multiplicative) bad reduction the difference between the
    conductor and the discriminant valuations at an odd prime $p$
    equals the number of irreducible components of the Néron model
    minus one; we claim that hypotheses~\ref{hypo:KF} together with
    $E$ being semistable implies that such number is always odd, hence
    the result. Note that the $2$-division polynomial of a semistable
    curve always has a root on the base field, hence (\dag.i) cannot
    hold. The case (\dag.ii) implies that the discriminant of the
    polynomial has valuation $0$ or $1$ (recall that $p$ is odd),
    hence there is a unique component. Finally, the condition
    (\dag.iii) implies that the number of components is odd.

    A similar result holds for other elliptic curves where all primes
    of bad reduction satisfy that $[E(K):E_0(K)]$ is odd (i.e. condition 
    (\dag.iii) even for $p=2$), since for odd primes the hypothesis implies that the number of
    irreducible components in the N\'eron model of $E$ is odd and for $p=2$ the result follows from the proof of \cite[Lemma 5.9]{MR3954912}, end of part $(3)$.
  \end{remark}
Let $\C_1$
be the set of prime numbers which ramify completely or are totally
inert in $A_\Q$, and let
$K = \QQ\left(\sqrt{p^\ast}\; : \; p \in \C_1\right)$, an
infinite polyquadratic extension.
\begin{coro}
\label{coro:inifiteextension}
In the hypotheses of Theorem~\ref{thm:negativetwists},
suppose that $E$ has trivial $2$-Selmer
  group.
  %Let $K = \QQ\left(\sqrt{p^\ast}\; : \; p \in \C_1\right)$ be an
  %infinite polyquadratic extension.
  Then $E(K)$ is finite.
\end{coro}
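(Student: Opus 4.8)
The plan is to deduce finiteness from two separate facts: that the torsion subgroup $E(K)_{\mathrm{tors}}$ is finite, and that $E(K)$ has rank zero. For the torsion, I would first observe that by hypothesis~\ref{hypo:KF} the algebra $A_\QQ$ is a cubic field, so $A_\QQ\cap K=\QQ$ because $K/\QQ$ is a (pro-)$2$ extension; consequently $E(K)[2]=0$, and since a point of order $2^n$ would produce one of order $2$, we get $E(K)[2^\infty]=0$. For the odd part I would use that $K$ sits inside the maximal multiquadratic extension of $\QQ$: a point of odd prime order $\ell$ would force the mod-$\ell$ image of Galois to possess a nonzero vector fixed by an elementary abelian $2$-quotient, which is possible only for finitely many $\ell$ and bounds the $\ell$-torsion. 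Thus $E(K)_{\mathrm{tors}}$ is finite.

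The rank is the heart of the matter, and here I would use the eigenspace decomposition of $E(K)\otimes\QQ$ as a $\QQ[\Gal(K/\QQ)]$-module. For each finite subextension $K'=\QQ(\sqrt{p_1^\ast},\dots,\sqrt{p_n^\ast})$ one has $\rank E(K')=\rank E(\QQ)+\sum_{\emptyset\neq S}\rank E_{d_S}(\QQ)$, where $d_S=\prod_{i\in S}p_i^\ast$ ranges over the quadratic subfields of $K'$. Passing to the limit, $E(K)$ has rank zero if and only if \emph{every} admissible twist $E_{d}$ (with $d$ a product of distinct $p^\ast$, $p\in\C_1$, including $d=1$) satisfies $\rank E_{d}(\QQ)=0$. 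So the whole problem reduces to controlling this family. Note that $\C_1$ contains only finitely many ramified primes (those dividing $\Disc(A_\QQ)$), so the infinite part of $K$ is governed by the inert primes, where the arithmetic is cleanest; the finitely many ramified twists contribute a finite subextension and are checked individually.

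To control the inert family I would combine Theorem~\ref{thm:bound} with the twist analysis of Section~3. The vanishing $\Sel_2(E)=0$ forces $\dim_{\FF_2}\Cll(A_\QQ,E)[2]=0$ by the lower bound, and under the standing hypotheses ($\Disc(E)<0$, so no real places of type (ii) persist after twisting since $\Disc(E_d)$ keeps the sign of $\Disc(E)$; or $\Cl_+(A_\QQ)=\Cl(A_\QQ)$, so $\Cll=\Cl$ for all twists) one has $\Cll(A_\QQ,E_d)=\Cll(A_\QQ,E)$, whence $\dim_{\FF_2}\Cll(A_\QQ,E_d)[2]=0$. Theorem~\ref{thm:bound} applied to each $E_d$ then gives $\dim_{\FF_2}\Sel_2(E_d)\in\{0,1\}$, the value being $0$ exactly when $\epsilon(E_d)=+1$; in particular $\Sel_2(E)=0$ already forces $\epsilon(E)=+1$ by the final clause of Theorem~\ref{thm:bound}. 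Iterating the relation $\epsilon(E)\epsilon(E_{p^\ast})=\chi_p(-\Disc(E)/N_E)$ over an inert $S$ (the conductor and discriminant factors entering with even exponents) yields the multiplicative formula $\epsilon(E_{d_S})=\prod_{p\in S}\chi_p(-\Disc(E)/N_E)$.

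The main obstacle is precisely this last formula: finiteness of $E(K)$ is equivalent to $\epsilon(E_{d_S})=+1$ for \emph{all} $S$, i.e.\ to $\chi_p(-\Disc(E)/N_E)=+1$ for every $p\in\C_1$, for only then does every twist have trivial $2$-Selmer group, and only then does $\dim_{\FF_2}\Sel_2(E_d)=0$ force both $\rank E_d(\QQ)=0$ and $\Sha(E_d)[2]=0$. This is exactly the condition highlighted in the Remark following Corollary~\ref{coro:twists} (it holds, for instance, whenever $-\Disc(E)/N_E$ is a perfect square, as for semistable curves of odd conductor with $\Disc(E)<0$); if a single $p\in\C_1$ gave $\chi_p(-\Disc(E)/N_E)=-1$, the twist $E_{p^\ast}$ would have $\dim_{\FF_2}\Sel_2=1$ and could acquire a point of infinite order over the quadratic subfield $\QQ(\sqrt{p^\ast})\subset K$, destroying finiteness. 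Granting that all these root numbers are $+1$, every admissible twist has rank zero, so $E(K)=E(K)_{\mathrm{tors}}$, which we showed is finite; hence $E(K)$ is finite.
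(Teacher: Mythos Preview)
Your overall strategy coincides with the paper's: pass to a finite polyquadratic subextension $L$, decompose $E(L)$ via the isogeny $\Res^L_\QQ E\to\bigoplus_\chi E_\chi$ (your eigenspace decomposition is the same thing), and then argue that every twist $E_\chi$ has trivial $2$-Selmer group so that no point of infinite order can appear. The paper's torsion argument is close to yours: it rules out $2$-power torsion by the degree mismatch between $A_\QQ$ and any $2$-power extension, and bounds odd torsion by projecting $\phi(P)$ into some $E_\chi(\QQ)_{\mathrm{tors}}$ and invoking Mazur's theorem, rather than your Galois-image sketch.

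Where you go further than the paper is in isolating the root-number obstruction. The paper simply asserts, citing Theorems~\ref{thm:bound} and~\ref{thm:negativetwists}, that ``all curves $E_\chi$ have trivial $2$-Selmer group''; but as you observe, Theorem~\ref{thm:bound} only pins $\dim_{\FF_2}\Sel_2(E_\chi)$ to $\{0,1\}$, and getting $0$ requires $\epsilon(E_\chi)=+1$, which in turn needs $\chi_p(-N_E)=+1$ for every $p\in\C_1$. The paper does not verify this, and indeed the Remark following Corollary~\ref{coro:twists} singles out the case $-\Disc(E)/N_E\in(\QQ^\times)^2$ (e.g.\ the worked example $\lmfdbec{11}{a}{1}$) as the situation where it holds automatically. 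So you have correctly located a point the paper's proof glosses over.

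One small overreach on your side: you call the condition $\epsilon(E_{d_S})=+1$ for all $S$ \emph{equivalent} to finiteness of $E(K)$. That is stronger than what the argument gives. A twist with $\dim_{\FF_2}\Sel_2=1$ could in principle still have rank $0$ (the extra class sitting in $\Sha[2]$), so rank zero for all twists does not force all root numbers to be $+1$ without invoking parity results of BSD type. What you can say unconditionally is that $\chi_p(-N_E)=+1$ for all $p\in\C_1$ is \emph{sufficient}, and that is exactly what the paper's proof needs (and tacitly assumes).
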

\begin{proof}
  If $P \in E(K)$ is a point of infinite order, then $P$ belongs to a
  finite polyquadratic subextension $L/\QQ$. Let $A = \Res^L_\QQ E$ be the restriction of scalars, so $E(L) = A(\QQ)$. There is an isogeny
  \[
      \phi: A \rightarrow \sum_{\chi} E_\chi,
\]
where $\chi$ runs over quadratic characters of $\Gal(L/\QQ)$. By
Theorems \ref{thm:bound} and ~\ref{thm:negativetwists} all curves $E_\chi$ have trivial
$2$-Selmer group, hence $P$ cannot have infinite order. We deduce the corollary by noting that $E(K)_{\text{tors}}$ is finite by \cite{Ribet}.
\end{proof}

% {\color{blue}To control the
% points of finite order, let $P \in E(K)_{\text{tors}}$ be a point of finite order, lying in a finite extension $L/\QQ$. Note the following:
% \begin{itemize}
% \item If $P \in \ker(\phi)$, then $2^rP = 0$, where $2^r = [L:\QQ]$.
  
% \item The $\chi$-coordinate of the point $\phi(P)$ belongs to $E_\chi(\QQ)_{\text{tors}}$.
% \end{itemize}
% Then if $P$ has odd order, its image in some $E_\chi$ is non-zero (by
% the first property), and is a torsion point in $E_\chi(\QQ)$ (by the
% second one), hence its order is bounded by Mazur's theorem
% (\cite[Theorem 7']{MR488287}). If $P \in E(L)[2]$, the $2$-division
% polynomial of $E$ would be a cubic irreducible polynomial having a
% root over an extension of order a power of $2$, which cannot happen.}

\begin{exm}
  The elliptic curve $E_{11a1}$ with LMFDB label \lmfdbec{11}{a}{1}
  has no rational $2$-torsion points and is semistable. 
    Its cubic field corresponds to the polynomial $x^3-x^2+x+1$
  of discriminant $-44$. The prime $11$ is not totally ramified in
  $A_\QQ$, hence it does not belong to $\C_1$. The prime $2$ is
  totally ramified so $2\in\C_1$.  The set
  $\C_1 \subset \{p \; : \; \dkro{-44}{p}=1\}\cup\{2\}$, and over the
  polyquadratic extension $K=\QQ(\sqrt{p} \; : \; p \in \C_1)$, the
  group $E(K)$ is finite.

\end{exm}

For positive discriminants we get a similar result (with a
similar corollary); see also Example~\ref{exm:differentclassgroup}. Let $E/\QQ$ be an elliptic curve with $\Disc(E)>0$, and divide the set of primes inert in $A_\QQ$ into the following four different sets:
  \begin{itemize}
  \item $\C_{+,\square} = \{p \equiv 1 \pmod 4 \text{ such that }
    \frac{\Disc(E)}{N_E} \equiv \square \pmod{p}\}$,
  \item $\C_{+,\nsquare} = \{p \equiv 1 \pmod 4 \text{ such that }
    \frac{\Disc(E)}{N_E} \equiv \nsquare \pmod{p}\}$,
  \item $\C_{-,\square} = \{p \equiv 3 \pmod 4 \text{ such that }
    \frac{\Disc(E)}{N_E} \equiv \square \pmod{p}\}$,
  \item $\C_{-,\nsquare} = \{p \equiv 3 \pmod 4 \text{ such that }
    \frac{\Disc(E)}{N_E} \equiv \nsquare \pmod{p}\}$.
  \end{itemize}
The set $\C_{+,\square}$ is non-empty and has density at least $1/12$.
\begin{thm}
  \label{thm:positivetwists}
  Let $E/\QQ$ be an elliptic curve satisfying
  hypotheses~\ref{hypo:KF}, and suppose furthermore that $\Disc(E)>0$.
Then if $p$ is a prime inert in $A_\Q$ which does not divide
$\Disc(E)$, the root number of $E_{p^\ast}$ equals that of $E$ if
$p \in \C_{+,\square} \cup \C_{-,\square}$, while it is the opposite
one if $p \in \C_{+,\nsquare} \cup \C_{-,\nsquare}$. Furthermore, if
$p_1,p_2$ are inert primes in the same set,
$\Cll(A_\QQ,E_{p_1^\ast}) = \Cll(A_\QQ,E_{p_2^\ast})$.  In particular, if $p_1$
and $p_2$ belong to the same set, the curve $E_{p_1^\ast}$ and the
curve $E_{p_2^\ast}$ have the same $2$-Selmer group.
\end{thm}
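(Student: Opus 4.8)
The plan is to follow the same strategy as in the proof of Theorem~\ref{thm:negativetwists}, but now keeping careful track of the distinguished archimedean place. First I would establish the root number statement. Since $E : y^2 = F(x)$ has $\Disc(E) = 2^4 \Disc(F)$, the discriminant $\Disc(A_\QQ)$ differs from $\Disc(E)$ by a square; because $p$ is inert in $A_\QQ$ we get $\chi_p(\Disc(A_\QQ)) = 1$, so $\chi_p(\Disc(E)) = 1$ as well. Combining this with the standard formula $\epsilon(E)\epsilon(E_{p^\ast}) = \chi_p(-N_E)$ valid for $p \nmid 2\Disc(E)$, we obtain
\[
  \epsilon(E)\epsilon(E_{p^\ast}) = \chi_p(-N_E) = \chi_p\!\left(-\tfrac{\Disc(E)}{N_E}\right) = \chi_p(-1)\,\chi_p\!\left(\tfrac{\Disc(E)}{N_E}\right).
\]
Now $\chi_p(-1) = +1$ exactly when $p \equiv 1 \pmod 4$, and $\chi_p(\Disc(E)/N_E) = +1$ exactly when $\Disc(E)/N_E$ is a square mod $p$; inspecting the four cases $\C_{\pm,\square}$, $\C_{\pm,\nsquare}$ gives precisely the asserted dichotomy (the product is $+1$ on $\C_{+,\square}\cup\C_{-,\nsquare}$ — wait, one must be careful: for $p \equiv 3 \pmod 4$ in $\C_{-,\square}$ the product is $(-1)(+1) = -1$, so actually the root number agrees on $\C_{+,\square}\cup\C_{-,\nsquare}$; I would recompute this carefully against the statement and adjust the case labels accordingly, as the bookkeeping is the only delicate point here).

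Next I would prove the statement about the class groups, namely $\Cll(A_\QQ, E_{p_1^\ast}) = \Cll(A_\QQ, E_{p_2^\ast})$ when $p_1, p_2$ lie in the same set. By Definition~\ref{defi:Cl*}, the group $\Cll(A_\QQ, E_d) = \Frac(A_\QQ)/P_*(E_d)$ depends on $E_d$ only through the choice of distinguished real place of $A_\QQ$ (Remark following Definition~\ref{defi:Cl*}, together with the Remark after the definition of $\Ml$, $\Mu$). As noted in the paragraph preceding Remark~\ref{rem:sameselmer}, twisting $E$ by $p^\ast$ reverses the ordering of the real roots of $F$ — hence flips the distinguished place — precisely when $p^\ast < 0$, i.e. when $p \equiv 3 \pmod 4$, and preserves it when $p \equiv 1 \pmod 4$. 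Since each of the four sets $\C_{+,\square}$, $\C_{+,\nsquare}$, $\C_{-,\square}$, $\C_{-,\nsquare}$ consists of primes all congruent to each other mod $4$, for $p_1, p_2$ in a common set the distinguished places of $A_\QQ$ attached to $E_{p_1^\ast}$ and $E_{p_2^\ast}$ coincide, so $P_*(E_{p_1^\ast}) = P_*(E_{p_2^\ast})$ and the class groups agree.

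Finally, for the $2$-Selmer conclusion I would invoke the sandwich $\Ml(E_{p_i^\ast}) \subset \Sel_2(E_{p_i^\ast}) \subset \Mu(E_{p_i^\ast})$ from Proposition~\ref{prop:selmer inclusion}. By Proposition~\ref{prop:C1asclassgroup} the lower term is $\Cll(A_\QQ, E_{p_i^\ast})[2]$, which by the previous paragraph is the same group for $p_1$ and $p_2$; and the upper term $\Mu(E_{p_i^\ast})$ also depends only on the distinguished place, hence is likewise the same. Since each twist satisfies hypotheses~\ref{hypo:KF} (by the Lemma guaranteeing twists by discriminants supported at inert/ramified primes preserve the hypotheses) and the index $[\Mu : \Ml]$ is exactly $2$ here — because $p_i^\ast$ is a prime discriminant so $A_\QQ/\QQ$ together with the one ramified prime forces the bound of Theorem~\ref{thm:M2order} with $[K:\QQ]=1$ to be attained — the Selmer group of $E_{p_i^\ast}$ is determined by which of the two candidate values it takes, and this in turn is governed entirely by the root number via the theory of parity (as in Remark~\ref{rem:sameselmer}). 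Since $p_1, p_2$ in the same set give $E_{p_1^\ast}$ and $E_{p_2^\ast}$ the same root number (by the first part), they have the same $2$-Selmer group. The main obstacle is purely organizational: matching the four sign-pattern cases against the correct root-number sign and verifying that "same set" genuinely forces "same distinguished place," which hinges on each set being pure mod $4$.
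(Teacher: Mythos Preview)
Your approach is essentially identical to the paper's: compute the root number via $\epsilon(E)\epsilon(E_{p^\ast})=\chi_p(-N_E)=\chi_p(-1)\,\chi_p(\Disc(E)/N_E)$ using $\chi_p(\Disc(E))=1$, then observe that $\Cll(A_\QQ,E_{p^\ast})$ depends only on the sign of $p^\ast$ (equivalently, the paper phrases this as ``$E_{p_1^\ast}$ and $E_{p_2^\ast}$ are a positive quadratic twist of each other''), and finally invoke Remark~\ref{rem:sameselmer}.

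Two small comments. First, your hesitation about the root-number bookkeeping is justified: your computation shows the root numbers agree precisely on $\C_{+,\square}\cup\C_{-,\nsquare}$, not on $\C_{+,\square}\cup\C_{-,\square}$ as the statement asserts; this looks like a sign slip in the paper's statement (its own proof is terse enough not to resolve it), and your argument is the correct one. Second, your justification for the index $[\Mu:\Ml]$ being \emph{exactly} $2$ is not quite right---it has nothing to do with $p_i^\ast$ being a prime discriminant. The equality comes from the remark after Theorem~\ref{thm:M2order} (equality holds for totally real base fields, in particular $K=\QQ$), which is exactly what Remark~\ref{rem:sameselmer} is using. With that correction your proof is complete and matches the paper's.
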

\begin{proof}
  Note that primes in $\C_{+,\square} \cup \C_{+,\nsquare}$
  (i.e. $p\equiv 1 \pmod 4$) correspond to twists by real quadratic
  fields and primes in $\C_{-,\square} \cup \C_{-,\nsquare}$
  correspond to twists by imaginary quadratic fields.

  The proof mimics the negative discriminant case. To get the root
  number statement, note that
  $\chi_p(-N_E) = \chi_p(-1) \chi_p(\Disc(E))$. Then if
  $p\equiv 1 \pmod 4$, the same proof applies, while if
  $p \equiv 3 \pmod 4$, $\chi_p(-N_E) = -\chi_p(\Disc(E))$, which
  explains the change of root number.

  Regarding the $2$-Selmer statement, if $p_1$ and $p_2$ belong to the
  same set, the curves $E_{p_1^\ast}$ and $E_{p_2^\ast}$ are a
  positive quadratic twist of each other, hence
  $\Cll(A_\QQ,E_{p_1^\ast}) = \Cll(A_\QQ,E_{p_2^\ast})$ so the
  bound of Theorem~\ref{thm:bound} and Remark~\ref{rem:sameselmer} prove the statement.  
\end{proof}

An immediate application of the previous result is that when
$\Disc(E)>0$ among the set of all quadratic twists of $E$ there is a
subset with density at least $1/12$ satisfying that all curves on it
have the same $2$-Selmer group as $E$ (corresponding to the primes in
$\C_{+,\square}$). A result similar to Corollary~\ref{coro:inifiteextension} applies in this situation.

%We have to restrict to prime discriminants because the set of all
%fundamental discriminants satisfying our hypotheses have density
%zero. Still, while considering the set of all quadratic twists, half
%of them have one parity of the $2$-Selmer rank and the other half the
%other one (see \cite[Theorem 1.5]{MR1381589}).  It is an interesting
%question to determine whether the set of $2$-Selmer groups in
%quadratic twists is constant in a positive density subset.
%
%It is a general expectation that in a family of quadratic twists of an
%elliptic curve, those of rank $0$ or $1$ have density one.  Hence
%the contribution to the $2$-Selmer group of
%Theorem~\ref{thm:quadtwist} should come from the $2$-torsion of the
%Tate-Shafarevich group. In the case when $\dim_{\FF_2}\Cll(A_\Q,E)[2] \ge 2$, our
%result would imply that $\Sha(E_d)[2]$ is non-trivial for almost all
%elements in the family of quadratic twists.
%
\subsection{General fields} The results of the previous section
have a natural analogue over a general number field $K$. Still there are
many subtleties, for example: it is not always true that given a prime
ideal $\id{p}$ of $K$ there is a quadratic extension of $K$ which is
unramified outside $\id{p}$ (and there might be more than one such
extension). The way to solve it is to consider quadratic extensions
$K(\sqrt{\alpha})/K$ of prime discriminant (instead of prime ideals),
and twist curves by them. Although most of the results for $\QQ$
extend mutatis mutandis for $K$, we give a weaker not technical
version.
  \begin{thm}
    Let $K$ be a number field and let $E/K$ be an elliptic curve
    satisfying hypotheses~\ref{hypo:KF}. Then among the quadratic
    twists of $E$ by quadratic extensions of prime discriminant, a
    positive proportion have $2$-Selmer group whose rank lies in the
    interval $[\Cll(A_K,E)[2],\Cll(A_K,E)[2]+[K:\QQ]]$.
\label{thm:quadtwist}
\end{thm}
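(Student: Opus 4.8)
The plan is to deduce the statement from Theorem~\ref{thm:bound}. It suffices to exhibit a family of prime-discriminant quadratic twists $E^{L}$ of $E$, of positive density among all prime-discriminant quadratic extensions $L/K$, such that each $E^{L}$ again satisfies hypotheses~\ref{hypo:KF} and, moreover, $\Cll(A_K,E^{L})=\Cll(A_K,E)$; applying Theorem~\ref{thm:bound} to such an $E^{L}$ then yields
\[
\dim_{\FF_2}\Cll(A_K,E)[2]\;\le\;\dim_{\FF_2}\Sel_2(E^{L})\;\le\;\dim_{\FF_2}\Cll(A_K,E)[2]+[K:\QQ],
\]
which is the assertion.

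To build the family, note first that since the narrow class number of $K$ is odd (hypotheses~\ref{hypo:KF}), the sign map from $\cO^\times$ to the product of $\{\pm1\}$ over the real places of $K$ is surjective and $\Cl(K)$ has odd order; hence any prime ideal $\id{q}\nmid 2$ of $K$ is principal up to the square of an ideal, say $\id{q}=(\beta)\,\id{b}^{2}$, and $\beta$ may be chosen totally positive. I would then restrict to those $\id{q}$ for which $\beta$ can in addition be taken $\equiv\square\pmod{4\cO}$ and set $L_\id{q}=K(\sqrt\beta)$, a quadratic extension of prime discriminant $\id{q}$ which is unramified at the primes above $2$. Finally I impose the Chebotarev condition that $\id{q}$ be inert in $A_K/K$ (the totally ramified primes, which have density $0$, could also be allowed); as in the proof of Theorem~\ref{thm:negativetwists}, the set of such $\id{q}$ has relative density $2/3$ if $A_K/K$ is Galois and $1/3$ otherwise.

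For such a $\id{q}$, the twist $E^{L_\id{q}}$ satisfies hypotheses~\ref{hypo:KF}: the condition on $K$ is untouched; $E^{L_\id{q}}(K)[2]=\{0\}$ because $E^{L_\id{q}}$ has the same $2$-division field $A_K$, a cubic field; and the local condition \textup{(\dag)} at every finite place holds by the same case analysis as in the $\QQ$-case lemma preceding Remark~\ref{rem:sameselmer}: at $v\mid\id{q}$ one is in case (\dag.i) since $A_{K_v}$ is a cubic field; at $v\nmid 2\id{q}$ the twist is by an unramified quadratic extension, so the Néron model and the relevant $2$-division data are unchanged and whichever of (\dag.ii)--(\dag.iv) held for $E$ still holds for $E^{L_\id{q}}$; and at $v\mid 2$ the condition $\beta\equiv\square\pmod{4\cO}$ keeps $L_\id{q}/K$ unramified, so (\dag.iv) persists (or one is again in case (\dag.i)). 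Since $\beta\gg0$, twisting by $L_\id{q}$ does not change the distinguished place at any real place of $K$ of type (ii) (Remark~\ref{rem:distinguishedplace}), whence $P_*(E^{L_\id{q}})=P_*(E)$ and therefore $\Cll(A_K,E^{L_\id{q}})=\Cll(A_K,E)$; if $K$ has no place of type (ii) this is automatic. Theorem~\ref{thm:bound} applied to $E^{L_\id{q}}$ now gives the inequality above with $L=L_\id{q}$.

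It remains to check that $\{L_\id{q}\}$ has positive density among prime-discriminant quadratic extensions of $K$, and this is the step I expect to be the main obstacle. Over a general number field, unlike over $\QQ$, a prime ideal need not be the discriminant of any quadratic extension, and may be the discriminant of several, so the assignment $\id{q}\mapsto L_\id{q}$ must be pinned down inside a ray class group of conductor dividing $4\cO$ times the real places, whose $2$-part is controlled by the odd narrow class group of $K$ together with the local contributions at $2$ and at the archimedean places. The three imposed conditions --- $\beta\equiv\square\pmod{4\cO}$, $\beta\gg0$, and $\id{q}$ inert in $A_K$ --- translate into the requirement that $\Frob_\id{q}$ lie in a fixed union of conjugacy classes of the Galois group of the compositum of this ray class field with the Galois closure of $A_K/K$; one must verify that the inert-in-$A_K$ condition does not conflict with the $2$-adic and archimedean ones --- i.e. that the relevant Galois groups combine as a fibre product over an easily determined common abelian quotient of $K$ --- and then Chebotarev provides a positive density. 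Granting this, and noting that distinct admissible $\id{q}$ give distinct $L_\id{q}$, the theorem follows; the place-by-place check of \textup{(\dag)} and the invariance of the distinguished place are routine once the family has been correctly defined.
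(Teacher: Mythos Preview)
Your approach is essentially the same as the paper's: restrict to prime-discriminant twists that are unramified at the type~(ii) archimedean places (so that $\Cll(A_K,E^{L})=\Cll(A_K,E)$) and whose prime is inert in $A_K$ (so that hypotheses~\ref{hypo:KF} transfer to the twist), then apply Theorem~\ref{thm:bound}. The paper's proof is a single sentence invoking Theorem~\ref{thm:bound} and Remark~\ref{rem:sameselmer}; you have unpacked the implicit steps---the local verification of \textup{(\dag)} for the twist, the $2$-adic congruence needed to keep the discriminant prime, and the Chebotarev compatibility check for positive density---which the paper leaves to the reader.
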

\begin{proof}
  Considering only quadratic extensions $K(\sqrt{\alpha})$ of prime
  discriminant which are unramified at the archimedean places of $K$
  of type (ii), we can assure that the groups $\Cll(A_K,E)$ and
  $\Cll(A_K,E_{\alpha})$ are equal, hence the result follows from
  Theorem~\ref{thm:bound} and Remark~\ref{rem:sameselmer}.
  %and the fact
  %that $[\Mu(E):\Ml(E)]$ is finite.
\end{proof}

\begin{remark}
  A similar application of the previous Theorem gives a result in the
  spirit of Corollary~\ref{coro:twists} for general number
  fields. However, even if we fix the root number, we cannot state
  precisely which rank in the above interval is obtained infinitely
  many times (except for example when $[K:\QQ]=2$), hence our result is not as strong as that of \cite{MR2660452} (Theorem 1.4).
\end{remark}

    \section{Examples}
\label{section:examples}

The following examples were computed using
SageMath~\cite{sagemath} and PARI/GP~\cite{PARI2}.
The $2$-Selmer rank, when necessary, was computed using Magma~\cite{Magma}.

\subsection{Examples with \texorpdfstring{$K=\QQ$}{K = ℚ}}
\begin{exm}
Let $F(x) = x^3-x^2-54x+169$ (corresponding to
the elliptic curve \lmfdbec{106276}{a}{1}). Its rank equals $3$. The discriminant of $F(x)$
equals $163^2$, which also equals the discriminant of $A_\Q$, hence
(\dag.ii) is satisfied for all primes. Furthermore, since the
discriminant is a square, $A_\Q$ is a Galois extension of $\QQ$.  The
class group $\Cl(A_\Q) = \Cl_+(A_\Q) \simeq \ZZ/2 \times \ZZ/2$. In
particular, $\Cll(A_\Q,E_d) = \Cl(A_\Q)$ has $2$-rank $2$, hence
Theorems~\ref{thm:bound},~\ref{thm:negativetwists} and
\ref{thm:positivetwists} imply that the curve and all quadratic twists
by primes which are inert in $A_\Q$ have $2$-Selmer rank in
$\{2,3\}$.

In fact the sign of the functional equations
gives the parity of the $2$-Selmer rank
(see \cite[Theorem 1.5]{MR1381589}),
hence the $2$-Selmer rank of $E_p$
is $3$ for inert primes $p\equiv1\pmod{4}$ and
$2$ for inert primes $p\equiv3\pmod{4}$.
For instance, $E$ itself has $2$-Selmer rank $3$,
while its quadratic twist by $d=-3$ has $2$-Selmer rank $2$. In
particular, both bounds are attained.

If we consider twists by split primes
(not satisfying hypotheses~\ref{hypo:KF}) we check that the twists by
$d=-23$, $5$, $-347$, $241$, $-331$, $2341$
have $2$-Selmer rank $0$, $1$, $2$, $3$, $4$, $5$ (respectively),
so neither the lower or upper bounds hold.
\end{exm}

\begin{exm}
\label{exm:differentclassgroup}  
  Let $F(x)=x^3 - 7x + 3$ (corresponding to the elliptic curve
  \lmfdbec{9032}{a}{1}, see Example~\ref{exm:sortingroots}). Its rank equals $2$. The
  discriminant of $F(x)$ equals $1129$, which also equals the
  discriminant of $A_\Q$, hence (\dag.ii) is satisfied for all
  primes.  The class group $\Cl(A_\Q)$ is trivial but the
  narrow class group $\Cl_+(A_\Q)$ has order $2$.
  The ray class group $\Cll(A_\Q,E)$ also has order $2$.
  In particular, when taking quadratic twists by discriminants
  $d>0$ it turns out that $\Cll(A_\Q,E_d)=\Cll(A_\Q,E)$ has $2$-rank
  1, hence Theorem~\ref{thm:bound} implies that the curve and all
  quadratic twists by positive prime discriminants which are inert in
  $A_\Q$ have $2$-Selmer rank in $\{1,2\}$, determined by the sign of
  the functional equation. For instance,
  the quadratic twists by $d=5$ and $d=113$ have $2$-Selmer rank $1$
  and $2$, respectively.

  If we take quadratic
  twists by discriminants $d<0$, the distinguished real place changes, and
  $\Cll(A_\Q,E_d)$ is trivial, hence all quadratic twists by negative
  prime discriminants which are inert in $A_\Q$ have $2$-Selmer group
  rank in $\{0,1\}$, determined by the sign of the functional equation.
  For instance, the quadratic twists by $d=-43$ and $d=-7$ have
  $2$-Selmer rank $0$ and $1$, respectively.
\end{exm}

\subsection{Examples with \texorpdfstring{$K=\QQ(\sqrt{17})$}{K = ℚ(√17)}}
The quadratic field $K$ has trivial narrow
class group (hence it equals the class group).
\begin{exm}
    Let $F(x)=x^3+x+3$ (corresponding, over $\QQ$, to the elliptic
    curve \lmfdbec{1976}{a}{1}). Its rank equals $2$.
%Consider the elliptic curve:
%\[
%E: y^2 = x^3+x+3
%\]
%a semistable curve with LMFDB label \lmfdbec{1976}{a}{1}.
The
discriminant of $F(x)$ equals $-13\cdot19$, which also equals the
discriminant of $A_K$, hence (\dag.ii) is
satisfied for all primes.
The narrow class group of $A_K$ is
trivial, hence $\Cll(A_K,E_d)$ is trivial.
Theorem~\ref{thm:bound} thus
implies that the curve and all quadratic twists by primes which are
inert in $A_K$ have $2$-Selmer rank in $\{0,1,2\}$.

The curve itself,
and also the quadratic twist by $d=97+24\sqrt{17}$ of norm
$383$, have $2$-Selmer rank $2$, the quadratic twist by
$d=-13+2\sqrt{17}$ of norm $101$ has $2$-Selmer rank $1$, and the
quadratic twist by $d=45+8\sqrt{17}$ of norm $937$ has $2$-Selmer rank
$0$.  On the other hand the quadratic twist by $d=29+4\sqrt{17}$,
which is \emph{not} inert in $A_K$, has $2$-Selmer rank $3$.
\end{exm}

% The quadratic field $K$ has narrow
% class group $1$. Consider the elliptic curve:
% \[
% E: y^2+y = x^3+x^2+x-1
% \]
% a semistable curve with LMFDB label \lmfdbec{51}{a}{2}. Its Neron
% model has three components at the prime $3$ and one at the prime $17$,
% hence (\dag.iii) is satisfied.

% The class group $\Cl_*(A_K)$ is trivial, hence our bound for quadratic
% twists implies that their $2$-Selmer rank lies in $[0,2]$. The curve
% itself (and for example the quadratic twist by the principal ideal
% $(15-2\sqrt{17})$, of norm $157$) have $2$-Selmer rank one, while the
% twist by $7-2\sqrt{17}$ (and its conjugate) of norm $19$ have
% $2$-Selmer rank $2$. In particular, the upper bound is attained.

% \TODO{FIX ME no cumplo las hipótesis, hay 2 componentes en 3} Consider the elliptic curve:
% \[
% E: y^2+y = x^3-x^2-2x+2,
% \]
% a semistable curve with LMFDB label \lmfdbec{57}{a}{1}. All the above
% considerations work (so we can apply our bound). Its $2$-division
% polynomial corresponds to the cubic field $F(x) = x^3-2x-2$. The
% class group $\Cl_*(A_K)$ is trivial and the same holds for the
% $2$-Selmer group of the curve (and the twist by $\sqrt{5}$) so the
% lower bound is also attained.

\subsection{Examples with $K = \text{\lmfdbnumberfield{3.1.23.1}}$} The field $K$
corresponds to the cubic field of discriminant $-23$ given by
$K=\QQ(\alpha)$ with $\alpha^3-\alpha^2+1$ and trivial narrow class group.
Since $[K:\QQ]=3$, our lower and upper bound in
Theorem~\ref{thm:bound} differ by $3$ so the
functional equation sign is not enough to determine the rank of the
$2$-Selmer group in any case. 

\begin{exm}
    Let $F(x)=x^3+x+3$ (corresponding, over $\QQ$, to the elliptic
    curve \lmfdbec{1976}{a}{1}).
The
discriminant of $F(x)$ equals $-13\cdot19$, which also equals the
discriminant of $A_K$, hence (\dag.ii) is
satisfied for all primes. Its rank equals $1$.

The narrow class group of $A_K$ is trivial, hence $\Cll(A_K,E)$ is trivial. 
Our bound implies that the curve and all quadratic twists by primes which are
inert in $A_K$ have $2$-Selmer rank in $\{0,1,2,3\}$.

The curve itself and the quadratic twist by
$-2\alpha^2 + \alpha - 2$ have $2$-Selmer rank $1$, and the quadratic
twist by $-4\alpha^2 + 3\alpha + 1$ has $2$-Selmer rank $0$. In
particular the lower bound is attained.

On the other hand, we note that all the quadratic twists by inert
prime discriminants of norm up to $100\,000$ (there are $808$ such
discriminants) have $2$-Selmer rank $0$ or $1$.
This is not explained by our results.
\end{exm}

\begin{exm}
    Let $F(x)=x^3+x+11$ (corresponding, over $\QQ$, to the elliptic
    curve \lmfdbec{26168}{a}{1}).
The
discriminant of $F(x)$ equals $-3271$, which also equals the
discriminant of $A_K$, hence (\dag.ii) is
satisfied for all primes. Its rank equals $4$.

The class group $\Cl(A_K)=\Cl_+(A_K)\simeq\ZZ/2$. In particular
$\Cll(A_K,E)=\Cl(A_K)$ has $2$-rank $1$.
Thus our bound implies that the curve and all quadratic twists by primes which are
inert in $A_K$ have $2$-Selmer rank in $\{1,2,3,4\}$.

The curve itself and the quadratic twist by
$-2\alpha^2 + \alpha - 2$ have $2$-Selmer rank $4$, and the quadratic
twist by $-\alpha^2 - \alpha + 4$ has $2$-Selmer rank $3$. In
particular the upper bound is attained.

On the other hand, we note that all the quadratic twists by inert
prime discriminants of norm up to $100\,000$ (there are $844$ such
discriminants) have $2$-Selmer rank $3$ or $4$.
This is not explained by our results.
\end{exm}

\section*{Acknowledgments} 
The present article grew as a research project in the BIRS-CMO
workshop ``Number Theory in the Americas'' held in Oaxaca. We would
like to thank the organizers for providing such a fruitful
environment. Part of this project was done in a visit of the second
author to the Centro de Matemática of Universidad de la República, we
want to thank the institution for its hospitality.  We thank Myungjun
Yu for reporting a mistake in an earlier verision of
Lemma~\ref{lemma:fixed}. Last but not least, we want to thank the
referee for many suggestions that improved the exposition and quality of the
present article.

\bibliographystyle{alpha}
\bibliography{biblio}

\end{document}